\theoremstyle{plain}
\newtheorem{theorem}{Theorem} 
\newtheorem{lemma}{Lemma}
\newtheorem{proposition}{Proposition}
\newtheorem{corollary}{Corollary}
 \theoremstyle{definition}
\newtheorem{example}{Example} \theoremstyle{remark}
\newtheorem*{remark*}{Remark} \newtheorem{remark}{Remark}
 \newcommand{\R}{\mathbb{R}}
\newcommand{\Rd}{{\R^{d}}}
\renewcommand{\leq}{\leqslant}
\renewcommand{\geq}{\geqslant}
\newcommand{\Z}{\int^{\infty}_{0}}
\title{On Harnack inequality and H\"{o}lder regularity for isotropic unimodal L\'{e}vy processes }
\author{Tomasz Grzywny\\
Institute of Mathematics and Computer Sciences,\\
  Wroc\l{}aw University of Technology\\
ul. Wybrzeze Wyspianskiego 27\\
50-370 Wroclaw, Poland\\
Tel.: +48-71-3203153\\
Fax: +48-71-3280751\\
tomasz.grzywny@pwr.wroc.pl
}
\begin{document}

\footnotetext{\emph{2010 MSC:} 60J45, 60G51, 31B15.\\ {\emph Keywords:} L\'{e}vy process, Green function,  Harnack inequality, harmonic function, potential measure, capacity, subordinate Brownian motion}

\maketitle

\begin{abstract}
We prove the scale invariant Harnack inequality and regularity properties for harmonic functions with respect to an isotropic unimodal L\'{e}vy process with  the characteristic exponent $\psi$ satisfying some scaling condition. We derive sharp estimates of the potential measure and capacity of balls, and further, under the assumption that $\psi$ satisfies the lower scaling condition, sharp estimates of the potential kernel of the underlying process. This allows us to establish the Krylov-Safonov type estimate, which is the key ingredient in the approach of Bass and Levin, that we follow.
\end{abstract}

\section{Introduction}

Let $X_t$ be a L\'{e}vy process with the characteristic exponent
$$\psi(x)=\left<x,Ax\right>-i\left<x,\gamma\right>-\int_{\R^d}\left(e^{i\left<x,z\right>}-1-i\left<x,z\right>{\textbf{1}_{|z|<1}}\right)\nu(dz), \quad z\in\R^d,$$
where  $A$ is  a symmetric and non-negative definite matrix, $\nu$ is a L\'{e}vy measure, i.e. $\nu(\{0\})=0$, $\int_{\Rd}\left(1\wedge|z|^2\right)\nu(dz)<\infty$
 and $\gamma\in\R^d$. A generator of this process has the following form, for $f\in C^2_b(\R^d)$,
\begin{equation}\label{generator}
{\cal A} f(x)= \sum_{j,k}A_{jk}\partial^2_{jk} f(x)+\left<\gamma,\nabla f(x)\right>+\int \left(f(x+z)-f(x)
-{\bf 1}_{|z|<1}\left<z,\nabla f(x)\right>\right)\nu(dz).
\end{equation}

As usual we denote by $P^x$ and $E^x$ the probability measure $P(\cdot|X_0=x)$ and the corresponding expectation, respectively.
The {\it first exit time} of an (open)
   set  $D\subset {\Rd}$ and the {\it first hitting time} to $D$ (closed)
   by the process $X_t$ is defined by the formula
   $$
   \tau_{D}=\inf\{t> 0:\, X_t\notin D\},\,\qquad T_{D}=\inf\{t>0:\,X_t\in D\}.
   $$
A function $f:\R^d\rightarrow [0,\infty)$ is said that is harmonic with respect to $X_t$ in an open set $D$ if for any bounded open set $B$ such that $\bar{B}\subset D$
$$f(x)=E^xf(X_{\tau_B}), \qquad x\in B.$$
The {\it scale invariant Harnack inequality} holds for a process $X_t$ if for any $R>0$ there exists a constant $C=C(R)$ such that for any function non-negative on $\Rd$ and  harmonic in a ball $B(0,r)$, $r\leq R$, $$\sup_{x\in B(0,r/2)}h(x)\leq C\inf_{x\in B(0,r/2)}h(x).$$
We say that the {\it global} scale invariant Harnack inequality holds if constant in the above inequality does not depend on $R$.

A  measure $m(dx)$ is isotropic unimodal if there exists a non-increasing function $m_0:(0,\infty)\rightarrow [0,\infty)$ such that $m(dx)=m_0(|x|)dx$, for $x\neq 0$. A process is isotropic unimodal if a transition probability $P_t(dx)$ is isotropic unimodal, for all $t>0$.

Important class of isotropic unimodal L\'{e}vy processes are subordinate Brownian motions.

Let $f$ be a positive function on $\R^d\setminus\{0\}$.  We say that
$f$ satisfies the {\it weak lower scaling condition} WLSC($\beta, \theta, C$), if $\beta>0$, $\theta\geq 0$, $C>0$, and
$$
 f(\lambda x)\ge
{C}\lambda^{\beta} f(x),\quad \mbox{for}\quad \lambda\ge 1, \quad |x|\geq\theta.
$$
If $f$ satisfies WLSC($\beta,0,C$), then we say
that $f$ satisfies the {\it global} weak lower scaling condition.

The main purpose of this paper is to prove the scale invariant Harnack inequality and regularity properties for harmonic functions with respect to an isotropic unimodal L\'{e}vy process with  the characteristic exponent satisfying the weak lower scaling condition.  Our main technical results are sharp estimates of the potential measure and capacity of balls, and further, under the assumption  that $\psi$ satisfies the weak lower scaling condition, sharp estimates of the potential kernel of the underlying process. This allows us to establish the Krylov-Safonov type estimate (see Proposition \ref{PropKrylovSafonov}), which says that there are $c$ and $\lambda<1$ such that for a closed set $A\subset B(0,\lambda r)$,
$$P^x(T_A<\tau_{B(0,r)})\geq c\frac{|A|}{|B(0,r)|},\qquad x\in B(0,\lambda r).$$
This estimate is the key ingredient of the proofs of the Harnack inequality and local H\"{o}lder continuity of harmonic functions in the approach of Bass and Levin (\cite{BL}) that we follow.

Our main contribution is the fact that we assume only a mild  condition for the characteristic exponent but we do not use in our proofs any properties  of the L\'{e}vy measure except it is isotropic and unimodal.
 Usually in the existing literature on the Harnack inequality for L\'evy processes  the assumptions are given in terms of the behaviour of the L\'evy measure (see \cite{SongVondracek}, Section 3) or the initial step relies on describing its behaviour (\cite{KSV2}).
  Our result seems to   be  important for application to subordinate Brownian motions. There are examples when  the characteristic exponent is known, while estimates for the L\'{e}vy measure are not.
 We should  also notice that  our approach allows to deal with  isotropic unimodal processes with the L\'{e}vy-Khinchine exponent behaving at infinity almost like the exponent for a Brownian motion, which to our best knowledge  were not treated in the literature, except a few particular cases. Namely, we can take $\psi(x)= |x|^2 l(|x|)$, where $l$ is slowly varying and goes to $0$ at infinity.   An example of such a process is for instance a process with density of its L\'{e}vy measure equal to $|x|^{-d-2}\log^{-2}(2+|x|^{-1})$. Moreover, our result  allows to  extend the scale invariant Harnack  inequality to its global version  for many processes.   For instance we get the global scale invariant Harnack inequality for $\alpha$-stable relativistic processes.

The main results of this paper are  following two theorems. The first one is the scale invariant Harnack inequality.
\begin{theorem}\label{Harnack}Let $d\geq 3$. Suppose that $X_t$ is isotropic and unimodal.
If $\psi$ satisfies WLSC$(\beta,\theta,C)$, then the scale invariant Harnack inequality holds. Moreover, if  $\psi$ satisfies the global  weak lower scaling condition, then the global scale invariant Harnack inequality holds.
\end{theorem}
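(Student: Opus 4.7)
The plan is to follow the approach of Bass and Levin as advertised in the introduction, deriving the Harnack inequality from the Krylov--Safonov hitting estimate of Proposition~\ref{PropKrylovSafonov} via a level-set iteration. Fix $r \le R$ and a nonnegative function $h$ on $\Rd$ that is harmonic in $B(0,r)$; the goal is to show $\sup_{B(0,r/2)} h \le C \inf_{B(0, r/2)} h$ with $C$ depending only on $R$ (and independent of $R$ in the global case).

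After normalizing so that $\inf_{B(0,r/2)} h = 1$, I would argue by contradiction: if $h(x_0) = K$ for some large $K$ at a point $x_0 \in B(0, r/2)$, I aim to produce $x_1 \in B(0, r/2)$ with $h(x_1) \ge \xi K$, where $\xi > 1$ is a fixed constant depending on the scaling data. Iterating then yields a sequence along which $h$ blows up, which contradicts the fact that $h$ is finite-valued (indeed the mean value identity $h(x_0) = E^{x_0} h(X_{\tau_{B(x_0,s)}})$ forces boundedness on a neighbourhood of each point where it is finite). The doubling step itself is standard: pick $\rho$ comparable to $r$ so that $B(x_0, \rho) \subset B(0, r)$, set $A = \{z \in B(x_0, \lambda\rho) : h(z) \ge \xi K\}$ with $\lambda$ as in Proposition~\ref{PropKrylovSafonov}, and decompose
$$K = h(x_0) = E^{x_0}\bigl[h(X_{\tau_{B(x_0,\rho)}}); T_A < \tau_{B(x_0, \rho)}\bigr] + E^{x_0}\bigl[h(X_{\tau_{B(x_0,\rho)}}); T_A \ge \tau_{B(x_0, \rho)}\bigr].$$
Applying the strong Markov property at $T_A$ to the first term, using $h \ge \xi K$ on $A$, and invoking Proposition~\ref{PropKrylovSafonov} to bound the hitting probability from below by $c\,|A|/|B(x_0, \rho)|$, one sees that $|A|$ must be a sufficiently small fraction of $|B(x_0, \rho)|$. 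A covering/complementary argument together with a quantitative estimate for a point $x_1 \in B(0,r/2)$ carrying a large value of $h$ then upgrades $K$ to $\xi K$, which is the desired doubling.

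For the global version, the hypothesis WLSC$(\beta,0,C)$ makes all potential-theoretic inputs of the paper (estimates of Green function, potential measure and capacity of balls) uniform across scales; in particular the constants $c$ and $\lambda$ in Proposition~\ref{PropKrylovSafonov} can be chosen independent of $r$. Re-running the above iteration with these scale-free constants yields a Harnack constant that does not depend on $R$.

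The main obstacle is not the iteration but the bookkeeping of scale-invariance throughout. The heavy lifting lies in Proposition~\ref{PropKrylovSafonov}, which already encodes the passage from WLSC on $\psi$ to quantitative hitting bounds via the capacity and potential-measure estimates derived earlier. In the Bass--Levin iteration one has to verify that the ratios $\rho/r$, the threshold $\xi$, and the critical size of $|A|/|B(x_0,\rho)|$ can all be chosen depending only on the scaling constants $(\beta, C)$ (and on $\theta/R$ in the non-global case), so that the doubling step can be iterated without constants deteriorating. The WLSC condition, valid above the scale $\theta$, is what ensures this: it prevents $\psi$ from degenerating at large scales and thereby keeps the geometry of exit and hitting times comparable across all relevant radii.
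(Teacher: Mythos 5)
Your outline follows the same Bass--Levin strategy as the paper, but it is missing one of the two essential inputs, and this is a genuine gap rather than bookkeeping. The Krylov--Safonov estimate (Proposition~\ref{PropKrylovSafonov}) alone does not close the doubling step for a process with jumps. In your decomposition of $K=h(x_0)$, after you conclude that the level set $A=\{h\ge \xi K\}$ occupies a small fraction of $B(x_0,\lambda\rho)$, the remaining mass of $h(x_0)$ must be carried either by values of $h$ inside $B(0,r)$ below the threshold $\xi K$ or by the term $E^{x_0}\bigl[h(X_{\tau_{B(x_0,\rho)}});\,X_{\tau_{B(x_0,\rho)}}\notin B(0,r)\bigr]$. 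On the complement of $B(0,r)$ the function $h$ is only nonnegative, not harmonic, and can be arbitrarily large; to control this contribution one must compare it with $h$ at the point where $h$ is close to its infimum, i.e.\ one needs an estimate of the form $E^{x}H(X_{\tau_{B_{r_0}}})\le C\,E^{y}H(X_{\tau_{B_r}})$ for $H\ge 0$ supported in $\overline{B_r}^{\,c}$ and $x,y$ near the center. This is exactly Proposition~\ref{PropSuppportOut} (comparability of Poisson kernels, resting on the Green function and L\'evy density estimates), which the paper explicitly lists alongside Proposition~\ref{PropKrylovSafonov} as the two inputs to the Bass--Levin machine. Your "covering/complementary argument" does not supply a substitute for it, so the iteration as you describe it does not terminate in a contradiction.

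Two smaller omissions: the iteration is carried out for \emph{bounded} harmonic functions (the blow-up $h(x_n)\ge\xi^nK$ contradicts boundedness on a compact subset of $B(0,r)$; your appeal to finite-valuedness via the mean value identity is not a proof of local boundedness), and the boundedness assumption is then removed separately, as in Song--Vondra\v{c}ek. Also, the propositions only give the inequality on balls of radius $r\le R=\theta^{-1}$ with constants depending on $(d,\beta,C^*)$; to obtain the statement for an arbitrary prescribed $R_0>R$ one needs a Harnack chain argument, which also explains why the constant is $R$-free precisely when $\theta=0$. You gesture at the right dependence of constants, but these steps should be made explicit.
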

The next theorem deals with  regularity of harmonic functions
 \begin{theorem}\label{Holder}Suppose that $X_t$ is isotropic and unimodal. Let $d\geq 3$ and $\psi$ satisfy WLSC$(\beta,\theta,C)$.
For any $R>0$  there exist  constants $c=c(R)$ and $\delta>0$ such that, for any $0<r\leq R$, and any bounded  function $h$, which is harmonic in $B(0,r)$,
  $$|h(x)-h(y)|\leq c||h||_{\infty}\left(\frac{|x-y|}{r}\right)^{\delta},\qquad x,y\in B(0,r/2).$$
\end{theorem}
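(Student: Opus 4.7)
The plan is to deduce Theorem \ref{Holder} from Proposition \ref{PropKrylovSafonov} by a Bass--Levin style geometric decay of oscillations. Fix $R>0$, $r\leq R$, a bounded $h$ harmonic in $B(0,r)$, and $x_0\in B(0,r/2)$. With $\lambda\in(0,1)$ the constant from Proposition \ref{PropKrylovSafonov}, set $r_k=\lambda^k r/2$, $B_k=B(x_0,r_k)$, $M_k=\sup_{B_k}h$, $m_k=\inf_{B_k}h$, $\mathrm{osc}_k=M_k-m_k$; for $k\geq 1$ one has $\bar B_k\subset B(x_0,r/2)\subset B(0,r)$, so $h$ is harmonic in every such $B_k$. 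I will prove
\[
\mathrm{osc}_k\leq K\rho^k\|h\|_\infty
\]
for some $\rho\in(0,1)$ and $K=K(R)$. Given this, Theorem \ref{Holder} follows: for $x,y\in B(0,r/2)$ with $|x-y|\leq r/2$, centre at $x_0=(x+y)/2$ and pick $k$ with $r_{k+1}\leq|x-y|<r_k$, so $\{x,y\}\subset B_k$ and $|h(x)-h(y)|\leq\mathrm{osc}_k\leq K\rho^k\|h\|_\infty\leq C\|h\|_\infty(|x-y|/r)^\delta$ with $\delta=\log(1/\rho)/\log(1/\lambda)$; the case $|x-y|>r/2$ is absorbed into $C$.

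The induction is driven by Proposition \ref{PropKrylovSafonov} applied to $B_{k-1}$. Put $\mu_{k-1}=(M_{k-1}+m_{k-1})/2$ and
\[
D^\pm=\{y\in B_k:\pm(h(y)-\mu_{k-1})\geq 0\};
\]
at least one, say $D^-$ (passed to a closed subset of comparable measure if necessary), satisfies $|D^-|\geq|B_k|/2$. Then for every $x\in B_k=B(x_0,\lambda r_{k-1})$ Proposition \ref{PropKrylovSafonov} gives
\[
P^x(T_{D^-}<\tau_{B_{k-1}})\geq c\,|D^-|/|B_{k-1}|\geq c\lambda^d/2=:c_0>0.
\]
Harmonicity of $h$ in $B_{k-1}$ and the strong Markov property at $T_{D^-}$ yield
\[
h(x)-m_{k-1}=E^x[(h-m_{k-1})(X_{T_{D^-}});T_{D^-}<\tau_{B_{k-1}}]+E^x[(h-m_{k-1})(X_{\tau_{B_{k-1}}});T_{D^-}\geq\tau_{B_{k-1}}].
\]
On the first event the integrand is at most $\mathrm{osc}_{k-1}/2$, producing the Krylov--Safonov gain over the trivial bound $\mathrm{osc}_{k-1}$.

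The main obstacle is the overshoot term: for a pure-jump process $X_{\tau_{B_{k-1}}}$ can land anywhere in $B_{k-1}^c$, and a blunt bound $2\|h\|_\infty$ would destroy any geometric decay. To handle it, decompose $B_{k-1}^c=\bigcup_{j=1}^{k-1}(B_{j-1}\setminus B_j)\cup B_0^c$, use $|(h-m_{k-1})(y)|\leq\mathrm{osc}_{j-1}$ on $B_{j-1}\setminus B_j$ (from $m_{j-1}\leq m_{k-1}\leq M_{k-1}\leq M_{j-1}$) and $\leq 2\|h\|_\infty$ on $B_0^c$, and invoke the Pruitt-type overshoot estimates for isotropic unimodal L\'{e}vy processes established earlier in the paper from its sharp potential/capacity bounds. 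Under WLSC$(\beta,\theta,C)$ these yield a polynomial tail
\[
P^x(X_{\tau_{B_{k-1}}}\notin B_j)\leq C_1\lambda^{\beta'(k-1-j)},
\]
valid up to scale $R$ for some $\beta'>0$. Combining with the symmetric bound for $M_{k-1}-h(x)$ produces a recursion of the form
\[
\mathrm{osc}_k\leq\alpha\,\mathrm{osc}_{k-1}+C_2\sum_{j=1}^{k-2}\lambda^{\beta'(k-1-j)}\mathrm{osc}_j+C_3\|h\|_\infty\lambda^{\beta'(k-1)},
\]
with some $\alpha<1$ packaging the Krylov--Safonov gain together with the close-annulus contribution. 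The delicate final step is the calibration of parameters: choose $\lambda$ small enough (once $c_0$ and $\alpha$ are fixed) so that the convolution-type correction is dominated by any preassigned $\rho\in(\alpha,1)$, whereupon a strong induction closes $\mathrm{osc}_k\leq K\rho^k\|h\|_\infty$ and the proof is complete.
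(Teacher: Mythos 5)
Your proposal is exactly the argument the paper has in mind: its proof of Theorem \ref{Holder} is omitted with a citation to Bass--Levin, Theorem 4.1, and states that the two needed ingredients are Proposition \ref{PropKrylovSafonov} and the second part of Corollary \ref{JumpProb} --- the latter being precisely the polynomial overshoot bound $P^x(|X_{\tau_{B_s}}|\geq t)\leq C(s/t)^{\beta}$ that you invoke to control the exit term. Your oscillation recursion and its closure are the standard Bass--Levin bookkeeping (modulo the minor point that the Krylov--Safonov constant itself degrades like $\lambda^{d}$ as the ball ratio $\lambda$ shrinks, so the final calibration must be done as in \cite{BL}, choosing $\lambda$ first from $\beta$ alone and only then fixing $\rho$), so the approach matches the paper's.
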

\begin{remark}
The assumption $d\geq 3$ in the two theorems above can be removed in the case of  subordinate Brownian motions (see Theorem \ref{Harnack_SBM}). For a general isotropic unimodal L\'{e}vy process  $X_t$ the  assumption $d\geq 3$ assures not only  that $X_t$ is transient but  the function $r\to r^{d-\varepsilon} \psi_0(1/r)$, for $\varepsilon\in(0,1)$ is almost increasing, where  $\psi_0$ is the radial profile of $\psi$.  The last property  is necessary in our approach (see proofs of Lemma \ref{Laplace_potential_LP}, Proposition \ref{GreenLowerLP}). At the end of Section 4 the  case $d=1,\,2$ is discussed in more detail.
\end{remark}

 Recently there has been a lot of research concerning  non-local operators. For instance, the paper \cite{ChenKimKumagai} established the scale-invariant finite range parabolic Harnack inequality for a class of jump-type Markov processes on metric measure spaces. A class of special subordinate Brownian motions have been studied in \cite{KM}, where bounds for the densities of L\'{e}vy measure and potential measure and Harnack inequalities were established. Harnack inequalities and regularity estimates for harmonic function with respect to diffusion with jumps are proved in \cite{Foondun}.
Related work on discontinuous processes include
\cite{SongVondracek}, \cite{SiSV}, \cite{Mimica},  \cite{ChenKumagai} and \cite{ChenKumagai2}.
Therefore it is pertinent to comment on the differences
between our results and those of some related papers.
For the sake of comparison we present them  in the context of L\'{e}vy processes, however
 most of them are in a more general setting of Markov processes.
\begin{itemize}
\item One of the main assumptions in \cite{ChenKimKumagai} is that the density of the L\'{e}vy measure is comparable to $\frac{1}{f(|x|)|x|^{d}}$ on $B(0,1)$, where $f$ is a strictly increasing continuous function
and satisfies the following conditions. There exist $0<\beta_1\leq \beta_2$, and a constant $c$ such that
$$c^{-1}\left(\frac{r_2}{r_1}\right)^{\beta_1}\leq\frac{f(r_2)}{f(r_1)}\leq c\left(\frac{r_2}{r_1}\right)^{\beta_2},\qquad 0<r_1<r_2<\infty,$$
$$\int^r_0\frac{s}{f(s)}ds\leq c \frac{r^2}{f(r)}, \qquad r>0.$$

  One can easily check that the  lower scaling condition for $f$ implies  the weak lower scaling condition for the characteristic exponent, hence our assumption is much weaker than that from \cite{ChenKimKumagai}.  In \cite{ChenKumagai}, under the assumption that  the above  estimate for the density of L\'{e}vy measure holds on the whole space, the authors obtained the global parabolic Harnack inequality. In our context of isotropic unimodal L\'{e}vy processes the global lower scaling for $f$ is sufficient to get the global Harnack inequality (see Example 2 in subsection 3.4).
\item In \cite{SongVondracek}  the following  Krylov-Safonov type estimate (Lemma 3.4) was derived
$$P^x(T_A<\tau_{B(0,r)})\geq c \frac{\nu(4r)}{\int(1\wedge|z|^2/r^2)\nu(z)dz}|A|.$$ Such an estimate is sufficient in the proof of the Harnack inequality only if a density of L\'{e}vy measure satisfies similar conditions as in \cite{ChenKimKumagai}. However,  it will not work for $\nu(x)=\frac{1}{|x|^{d+2}\ln^2(2+|x|^{-1})}$ since applying it one  obtains $P^x(T_A<\tau_B)\geq \frac{c}{\ln r^{-1}}\frac{|A|}{|B_r|}$, for $r\leq 1/2$. Hence if $r$ goes to $0$ the term $\frac{c}{\ln r^{-1}}$ vanishes, which makes the above bound useless for the proof of the scale invariant Harnack inequality.
\item In \cite{KM} it was considered a class of special subordinate Brownian motions such that a subordinator has a non-increasing density of the L\'{e}vy measure. Moreover, there was some scaling assumption on the Laplace exponent of subordinator  in terms of its derivative.

In the present paper  the weak lower scaling condition for the Laplace exponent of the subordinator  is sufficient to obtain the Harnack inequality and we do not need to assume anything else about the L\'evy measure of the subordinator. This does not mean that our result covers all  the results of  \cite{KM}. Their proof is not based on the Krylov-Safonov type estimate and it works for a large class of  slowly varying Laplace exponents, while our approach does not cover that case. This is due to the fact that the Krylov-Safonov type estimate does not need to hold for the subordinate Brownian motions driven by  subordinators with  slowly varying Laplace exponents. On the other hand
 our results improve the results from \cite{Mimica}, where it was studied only a particular case of subordinate Brownian motion  with $\psi(x)=\frac{|x|^2}{\ln(1+|x|^2)}-1$.
\item Since we do not exclude a case when a Gaussian part is non-zero we  mention the paper \cite{ChenKumagai2}, where   diffusions with jumps were considered. In this paper the density of the L\'evy measure is assumed to be bounded from above $\nu(x)\leq c|x|^{-d-\alpha}$, for $|x|\leq 1$, where $\alpha\in(0,2)$. Hence the result can not be applied to the process with $A=Id$ and $\nu(x)=\frac{1}{|x|^{d+2}\ln^2(2+|x|^{-1})}$.  Notice that for any L\'evy process with a non-trivial Gaussian part ($\mathrm{rank} A=d$) the  WLSC property holds for the characteristic exponent.
\item In \cite{Foondun} it is assumed that, for any $r<1$, there exist constants $c$ and $\alpha$ such that $\nu(x-z)\leq cr^{-\alpha}\nu(y-z)$, for $|x-y|<r$ and $|x-z|>r$. Therefore for instance this result does not cover the case $A=Id$ and $\nu(x)=\frac{1}{|x|^{d+2}\ln^2(1+|x|^{-1})}e^{-|x|^2}$,
 for which we even have the global Harnack inequality, due to Theorem \ref{Harnack}, since $\psi(x)\approx |x|^2$.
\end{itemize}

The paper is organized as follows. In Section 2, we give some preliminary results for general L\'{e}vy processes. Section 3 is devoted to prove estimates of Green function and the main results. Moreover, several examples  are presented to which   our approach applies.  In Section 4 some conditions are stated that are sufficient to prove the scale invariant Harnack inequality for L\'evy processes not necessarily isotropic and unimodal.

\section{Preliminaries}

In this section we introduce  notation and prove some auxiliary results for general L\'{e}vy processes.
We denote incomplete Gamma functions by
$$\gamma(\delta,t)=\int^t_0e^{-u}u^{\delta-1}du,\qquad \Gamma(\delta,t)=\int^\infty_te^{-u}u^{\delta-1}du,\quad \delta,\,t>0.$$
Let $B(x,r)$ denote a ball of center  $x$ and radius $r>0$ and let $B_r=B(0,r)$. By $\mathcal{L}$ we denote the Laplace transform, that means, for a measure $\mu$ on $[0,\infty)$,
$$\mathcal{L}\mu(\lambda)=\Z e^{-\lambda s}\mu(ds),\quad \lambda\geq 0.$$
For two non-negative functions $f$ and $g$ we write $f(x) \approx g(x)$ if  there is a positive number $C$ (i.e. a constant) such
that $C^{-1}f(x) \leq g(x) \leq C f(x)$. This $C$ is called a comparability constant. We write $C = C(a,\ldots, z)$ to emphasize that $C$  depends only on $a,\ldots, z$.  An integral $\int^b_a \ldots $ we understand as $\int_{[a,b)} \ldots $.

Our primary object is a potential measure $G$, which is well  defined for a  transient process, by the following formula
$$G(x,A)=\int^\infty_0P^x(X_t\in A)dt=E^x\int^\infty_0\textbf{1}_A(X_t)dt,$$
where $A$ is a Borel subset of $\R^d$. In what follows we always consider  Borel subsets of $\R^d$ without further mention.
Let $G(A)=G(0,A)$. Notice that $G(x,A)=G(A-x)$.  By a slight abuse of notation we also use   $G$ to  denote  the density of the absolutely continuous (with respect to the Lebesque measure)  part of the potential measure and then we call  $G(x,y)=G(y-x)$  a potential kernel.

The fundamental object of the potential theory is the {\it killed process} $X_t^D$
  when exiting the set $D$. It is defined in terms of sample paths up to time $\tau_D$.
  More precisely, we have the following formula:
  $$
  E^x f(X_t^D) =  E^x[t<\tau_D; f(X_t)]\,,\quad t>0\,.
  $$

 The potential measure of the process $X_t^D$ is called the
  {\it Green measure} and is denoted by $G_D$. That is
  $$G_D(x,A)=E^x\int^{\tau_D}_0\textbf{1}_A(X_t)dt.$$
  The corresponding kernel will be called the {\it Green function} of the set $D$ and denoted  $G_D(x,y)$. If the potential measure is  absolutely continuous, then  we have
 \begin{equation}\label{Hunt1}G_{D}(x,y)=G(y-x)-E^x\, G (X_{\tau_{D}}-y). \end{equation}

  Another important  object in the potential theory of $X_t$ is the
  {\it harmonic measure}  of the
  set $D$. It is defined by the formula:
  $$
  P_D(x,A)=
  E^x[\tau_D<\infty; {\bf{1}}_A(X_{\tau_D})].
  $$
  The density kernel (with respect to the Lebesgue measure) of  the measure $P_D(x,A)$ (if it exists) is called the
  {\it Poisson kernel} of the set $D$.
  The relationship between the Green function of $D$ and the harmonic measure is provided by the Ikeda-Watanabe formula \cite{IW},
\begin{equation} \label{IkedaWatanabe}
  P_D(x,A)=  \int_D \nu(A-y){G_D(x,dy)} , \quad A\subset (\bar{D})^c.
  \end{equation}

Important examples of isotropic unimodal L\'{e}vy processes are subordinate Brownian motions and some of  our results are
restricted to this class of processes. By $T_t$ we denote a subordinator i.e. a non-decreasing L\'{e}vy process starting from $0$. The Laplace transform of  $T_t$ is of the form
$$Ee^{-\lambda T_t}=e^{-t\phi(\lambda)},\qquad \lambda\geq 0,$$
where $\phi$ is called the Laplace exponent of $T$. $\phi$ is a Bernstein function and has the following representation:
$$\phi(\lambda)=b \lambda+\int_{(0,\infty)}(1-e^{-\lambda u})\mu(du),$$
where $b \ge 0$ and $\mu$ is a L\'{e}vy measure on $(0, \infty)$ such that $\int (1\wedge u) \mu(d u) < \infty$.

The potential measure of the subordinator $T$ is denoted by $U$.
Its Laplace transform is equal to
\begin{equation}\label{PotMeasureSubLaplace}\mathcal{L}U(\lambda)=\Z e^{-\lambda s} U(ds)=\frac{1}{\phi(\lambda)}.\end{equation}

We say that a Bernstein function $\phi$ is special if there exists a decreasing positive density $u$ on $(0,\infty)$ of a  measure $U|_{(0,\infty)}$. For a different characterization of special Bernstein functions see e.g. \cite{SSV}.

Let $B_t$ be a Brownian motion in $\Rd$ with the characteristic function of the form
$$Ee^{i\xi B_t}=e^{-t|x|^2},\quad  x\in \R^d.$$
By $g_t(x)$ we denote the  transition density of $B_t$.
Assume that $B_t$ and $T_t$ are stochastically independent. Then the process $X_t=B_{T_t}$ defines a subordinate Brownian motion. It is clear that
the characteristic function of $X_t$ takes the form
$$Ee^{i\xi X_t}=e^{-t\phi(|x|^2)}, \quad  x\in \R^d.$$
The  L\'{e}vy measure of the process $X_t$ is given by the following formula for its density
 $$\nu(x)dx=\int^\infty_0 g_u(x)\mu(du)dx,$$
while its  potential measure  is equal to
\begin{equation}\label{PotentialDensitySub}G(A)=\frac{1}{\lim_{\lambda\to \infty}\phi(\lambda)}\textbf{1}_{\{0\}}(A)+\int_A\Z g_s(y)U(ds)dy.\end{equation}
A subordinator which has a special Laplace exponent $\phi$ is called a special subordinator and the corresponding subordinate Brownian motion is called a special subordinate Brownian motion.

For a function $f: \R^d\to \mathbb{C}$ we define $f^*(u)=\sup_{|x|\leq u}\Re f(x)$. The following lemma will play an important role in the sequel.
\begin{lemma}\label{fStarScalling}
Let  $f: \R^d\to \mathbb{C}$ be a negative definite function, then
$$\frac{1}{2}\frac{s^2}{s^2+1}f^*(r)\leq f^*(s r)\leq 2(1+s^2)f^*(r),\qquad s,r>0.$$
\end{lemma}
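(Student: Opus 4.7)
The plan is to reduce both inequalities to the single integer-scaling bound
$$\Re f(n\xi) \leq n^2 \Re f(\xi), \qquad n \in \mathbb{N},\ \xi \in \R^d,$$
from which both directions of the lemma follow cheaply. To establish this bound, I would invoke the Lévy-Khintchine representation of a continuous negative definite function with $f(0)=0$:
$$\Re f(\xi) = \langle \xi, A\xi\rangle + \int_{\R^d}\bigl(1 - \cos\langle \xi, z\rangle\bigr)\,\nu(dz),$$
where $A$ is positive semidefinite and $\nu$ is a Lévy-type measure (the drift term is purely imaginary and disappears). The quadratic part scales exactly like $n^2$. For the integral part I would use the pointwise estimate $1 - \cos(n\theta) \leq n^2(1 - \cos\theta)$, which comes from $1 - \cos(n\theta) = 2\sin^2(n\theta/2)$ together with the classical bound $|\sin(n\alpha)| \leq n|\sin\alpha|$, itself a one-line induction via the sine addition formula. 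Integrating against $\nu$ yields the displayed integer-scaling inequality.

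For the upper bound in the lemma, I would first treat $s \geq 1$ by setting $n = \lceil s \rceil \leq s+1$: for any $x$ with $|x| \leq sr$ one has $|x/n| \leq r$, so
$$\Re f(x) \leq n^2\, \Re f(x/n) \leq (s+1)^2 f^*(r) \leq 2(1+s^2) f^*(r),$$
where the last step uses $(s+1)^2 \leq 2(1+s^2)$, i.e.\ $(s-1)^2 \geq 0$. Taking the supremum over $x$ gives $f^*(sr) \leq 2(1+s^2) f^*(r)$. For $0 < s \leq 1$ the monotonicity of $f^*$ gives the same conclusion at once, since $f^*(sr) \leq f^*(r)$ and $2(1+s^2) \geq 2$.

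The lower bound then comes out by duality: for $0 < s \leq 1$ apply the upper bound already proved with $(s, r)$ replaced by $(1/s,\, sr)$ to get
$$f^*(r) = f^*\!\bigl(\tfrac{1}{s}\cdot sr\bigr) \leq 2\bigl(1 + \tfrac{1}{s^2}\bigr) f^*(sr) = \tfrac{2(s^2+1)}{s^2}\, f^*(sr),$$
which rearranges to $f^*(sr) \geq \tfrac{1}{2}\tfrac{s^2}{s^2+1} f^*(r)$. For $s \geq 1$ the bound is immediate from the monotonicity of $f^*$, since $\tfrac{1}{2}\tfrac{s^2}{s^2+1} \leq \tfrac{1}{2}$. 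The only point requiring care in the whole proof is invoking the Lévy-Khintchine representation in the stated generality, but this is standard once ``negative definite'' is interpreted in the Bochner-Schoenberg sense used throughout the paper; no genuinely hard step is involved.
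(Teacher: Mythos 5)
Your proof is correct, and its overall skeleton coincides with the paper's: prove the upper bound first, then obtain the lower bound for $s<1$ by applying the upper bound with the pair $(1/s,\,sr)$ and use monotonicity of $f^*$ for $s\geq 1$ --- that second half is literally the paper's argument. The difference is in how the upper bound is obtained. The paper simply cites inequality (1.4) of Schilling's 1998 paper applied to the negative definite function $\Re f(r\,\cdot)$; that inequality is the Peetre-type bound $\sup_{|y|\le |x|}|g(y)|\le 2(1+|x|^2)\sup_{|y|\le 1}|g(y)|$, which ultimately rests on the sub-additivity of $\sqrt{|g(\cdot)-g(0)|}$ for negative definite $g$. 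You instead prove the key integer-scaling estimate $\Re f(n\xi)\le n^2\Re f(\xi)$ from scratch via the L\'evy--Khintchine representation and $1-\cos(n\theta)\le n^2(1-\cos\theta)$, then interpolate with $n=\lceil s\rceil$ and $(s+1)^2\le 2(1+s^2)$. This is a perfectly sound and self-contained alternative, with one caveat you yourself flag: the L\'evy--Khintchine representation requires $f$ to be \emph{continuous} (and normalized at $0$), whereas the lemma is stated for an arbitrary negative definite function and the sub-additivity route needs no such hypothesis. In every application in the paper ($\psi$, $\tilde\psi$, characteristic exponents of L\'evy processes) continuity holds, so nothing is lost there; but if you want your argument to match the stated generality, note that the same bound $f^*(n r)\le n^2 f^*(r)$ follows directly from $\sqrt{f^*(u+v)}\le\sqrt{f^*(u)}+\sqrt{f^*(v)}$ (a consequence of negative definiteness alone, after subtracting $f(0)\ge 0$), bypassing the representation formula entirely.
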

\begin{proof}Since $f$ is negative definite, $\Re f(x)$ and $f_r(x)=f(rx)$ are negative definite functions as well. The upper bound we get  e.g.  by using \cite{Schilling1998}, (1.4) for $\Re f_r$. If $s\geq 1$, then we get the lower bound by monotonicity of $f^*$. For $s<1$,  by the upper bound
$$f^*(r)=f^*(rss^{-1})\leq 2(1+s^{-2})f^*(rs),$$
which completes the proof.
\end{proof}

\begin{lemma}\label{star}  Let  $f: \R^d\to [0,\infty)$ and $\tilde{f}(u)=\sup_{|x|= u} f(x)$. Suppose that $\tilde{f}$ is positive on $(0,\infty)$ and $f(0)=0$. If $f$ satisfies WLSC$(\beta, \theta, C)$, then  WLSC$\left(\beta, \theta, C^2\frac{\tilde{f}(\theta)}{f^*(\theta)}\right)$ holds for $f^*$, where $\frac{\tilde{f}(0)}{f^*(0)}=1$.
\end{lemma}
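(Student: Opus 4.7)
The plan is to unravel the definition of $f^*$ and apply the scaling hypothesis pointwise, splitting on whether the sup defining $f^*(u)$ is approximated from outside or inside the ball $\{|x|<\theta\}$.

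Fix $u\geq\theta$ and $\lambda\geq 1$. For each $\varepsilon>0$ pick $x_\varepsilon$ with $|x_\varepsilon|\leq u$ and $f(x_\varepsilon)>f^*(u)-\varepsilon$. I would distinguish two cases.  In the first, $|x_\varepsilon|\geq\theta$, so WLSC$(\beta,\theta,C)$ applies directly to $x_\varepsilon$; since $|\lambda x_\varepsilon|\leq\lambda u$, monotonicity of $f^*$ yields
$$f^*(\lambda u)\geq f(\lambda x_\varepsilon)\geq C\lambda^\beta f(x_\varepsilon)>C\lambda^\beta(f^*(u)-\varepsilon).$$
In the second case $|x_\varepsilon|<\theta$, and here WLSC cannot be used on $x_\varepsilon$ itself; one then notes $f^*(u)-\varepsilon<f(x_\varepsilon)\leq f^*(\theta)$, pick instead an auxiliary $y$ with $|y|=\theta$ and $f(y)>\tilde f(\theta)-\varepsilon$, apply WLSC to $y$ (allowed because $|y|=\theta$), and use $|\lambda y|=\lambda\theta\leq\lambda u$ to get
$$f^*(\lambda u)\geq f(\lambda y)\geq C\lambda^\beta(\tilde f(\theta)-\varepsilon)=C\lambda^\beta\frac{\tilde f(\theta)-\varepsilon}{f^*(\theta)}f^*(\theta)>C\lambda^\beta\frac{\tilde f(\theta)-\varepsilon}{f^*(\theta)}(f^*(u)-\varepsilon).$$
Letting $\varepsilon\downarrow 0$ in each case gives either $f^*(\lambda u)\geq C\lambda^\beta f^*(u)$ or $f^*(\lambda u)\geq C\lambda^\beta\frac{\tilde f(\theta)}{f^*(\theta)}f^*(u)$.

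To merge the two bounds into a single constant I would observe that $C\leq 1$ (take $\lambda=1$ in WLSC at a point where $f>0$, which exists by positivity of $\tilde f$) and that $\tilde f(\theta)/f^*(\theta)\leq 1$ (since $f^*(\theta)=\sup_{|x|\leq\theta}f(x)\geq\sup_{|x|=\theta}f(x)=\tilde f(\theta)$). Both of these make the Case 2 bound the weaker one, and inserting an extra harmless factor of $C$ allows me to write the final estimate as
$$f^*(\lambda u)\geq C^2\frac{\tilde f(\theta)}{f^*(\theta)}\lambda^\beta f^*(u),\qquad \lambda\geq 1,\ u\geq\theta,$$
which is exactly WLSC$\bigl(\beta,\theta,C^2\tilde f(\theta)/f^*(\theta)\bigr)$ for $f^*$.

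The only real subtlety, and the step I would be most careful about, is Case 2: one must verify that moving from the possibly unreachable optimizer inside $\{|x|<\theta\}$ to a boundary point $|y|=\theta$ does not lose more than the announced factor $\tilde f(\theta)/f^*(\theta)$. The convention $\tilde f(0)/f^*(0)=1$ just covers the degenerate endpoint $\theta=0$ of the global scaling setting, where Case 2 is vacuous and the bound reduces to the one from Case 1.
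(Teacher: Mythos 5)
Your proof is correct, and it reaches the conclusion by a somewhat different route than the paper. The paper works through the auxiliary function $\tilde f$: it first observes that $\tilde f$ inherits WLSC$(\beta,\theta,C)$ from $f$ (taking suprema over spheres), deduces from this the almost-monotonicity $\tilde f(u)\leq C^{-1}\tilde f(s)$ for $\theta\leq u\leq s$, uses that together with the decomposition $f^*(u)=\max\{f^*(\theta),\sup_{\theta\leq|x|\leq u}f(x)\}$ to get the one-sided comparison $f^*(u)\leq C^{-1}\frac{f^*(\theta)}{\tilde f(\theta)}\tilde f(u)$ on $[\theta,\infty)$, and finally combines this with $f^*\geq\tilde f$ and the WLSC of $\tilde f$; the two factors of $C$ in the stated constant come from the almost-monotonicity step and the scaling step respectively. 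You instead argue pointwise with $\varepsilon$-optimizers and split on whether the near-maximizer lies in the annulus $\{\theta\leq|x|\leq u\}$ (where WLSC applies directly) or inside $B_\theta$ (where $f^*(u)$ is essentially $f^*(\theta)$ and you pay the ratio $\tilde f(\theta)/f^*(\theta)$ to relocate to the sphere $|x|=\theta$). The underlying mechanism is the same — the only obstruction to scaling is the sup being carried by $B_\theta$ — but your version bypasses the almost-monotonicity lemma for $\tilde f$ entirely, is more elementary, and in fact yields the sharper constant $C\,\tilde f(\theta)/f^*(\theta)$; the second factor of $C$ you insert at the end is needed only to match the statement. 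Your auxiliary observations ($C\leq 1$ via $\lambda=1$ at a point where $f>0$, which exists by positivity of $\tilde f$, and $\tilde f(\theta)\leq f^*(\theta)$) are both valid, and the only point worth polishing in a write-up is that the case dichotomy depends on $\varepsilon$, so one should take the minimum of the two bounds before letting $\varepsilon\downarrow 0$ — which is exactly what your final merged estimate does.
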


\begin{proof} We  assume that $\theta>0$, since the proof in the case $\theta=0$ is similar.
Note that,  WLSC$(\beta, \theta, C)$ holds for $\tilde{f}$. Hence,
\begin{equation}\label{tilde}\tilde{f}(u)\leq C^{-1}\tilde{f}(s),\qquad \theta \leq u\leq s.\end{equation}
Let  $u\geq\theta$. Since $\tilde{f}$ is positive on $(0,\infty)$,
$$
f^*(u)=\max\{f^*(\theta),\sup_{\theta\leq |x|\leq u} f(x)\}\leq  \frac{f^*(\theta)}{\tilde{f}(\theta)}\sup_{\theta\leq |x|\leq u} f(x)\leq C^{-1}\frac{f^*(\theta)}{\tilde{f}(\theta)}\tilde{f}(u).
$$
Hence, for $u\geq \theta$ and $\lambda>1$, applying again WLSC$(\beta, \theta, C)$ for $\tilde{f}$ we arrive at
$$f^*(\lambda u)\geq \tilde{f}(\lambda u) \geq C \lambda^{\beta}\tilde{f}(u)\geq C^2 \frac{\tilde{f}(\theta)}{f^*(\theta)}\,\lambda^{\beta}f^*( u).$$
\end{proof}

Until the end of this section we assume that $X_t$ is a L\'evy process characterized by a triplet $(A, \nu, \gamma)$.

In the proof of following lemma we follow closely the ideas of \cite{SongWu}, where authors proved similar result for isotropic stable processes.
\begin{lemma}\label{Harm1}Let $D\subset B_r$ and $x\in D\cap B_{r/2}$.
Then there is a constant $C=C(d)$ such that
$$P^x\left(|X_{\tau_{D}}|\ge r\right)=P_{D}(x,B_r^c)\le C h(r) E^x\tau_{D},$$
where  $$h(r)= ||A||r^{-2}+{\left|\gamma+\int z \left(\textbf{1}_{|z|<r}-\textbf{1}_{|z|<1}\right)\nu(dz)\right|}r^{-1}+\int_{\Rd} \min(1, |z|^2r^{-2})\ \nu(dz).$$
\end{lemma}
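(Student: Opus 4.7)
The plan is to use a standard test-function argument combined with Dynkin's formula. Construct a function $f \in C_b^2(\R^d)$ with $0 \leq f \leq 1$, $f \equiv 0$ on $B_{r/2}$, $f \equiv 1$ on $B_r^c$, and with the usual bounds $\|\nabla f\|_\infty \leq C_1/r$ and $\|\partial^2 f\|_\infty \leq C_1/r^2$ for a dimensional constant $C_1$. Because $x \in B_{r/2}$ we have $f(x)=0$, and since $f(X_{\tau_D}) \geq \ind_{|X_{\tau_D}|\geq r}$ pointwise, it suffices to bound $E^x f(X_{\tau_D})$.

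Next I would apply Dynkin's formula to $f$ (valid since $f\in C_b^2$ and we may assume $E^x\tau_D<\infty$, the statement being trivial otherwise) to write
$$E^x f(X_{\tau_D}) \;=\; f(x) + E^x\!\int_0^{\tau_D}\!\!\mathcal{A} f(X_s)\,ds \;=\; E^x\!\int_0^{\tau_D}\!\!\mathcal{A} f(X_s)\,ds.$$
Thus the task reduces to showing $|\mathcal{A} f(y)|\leq C h(r)$ uniformly in $y\in D\subset B_r$.

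To match the form of $h(r)$, I would rewrite the integrand in \eqref{generator} using the cutoff at radius $r$ instead of $1$ by adding and subtracting $\ind_{|z|<r}\langle z,\nabla f(y)\rangle$. This groups the extra linear term into an effective drift $\gamma_r := \gamma + \int z(\ind_{|z|<r}-\ind_{|z|<1})\nu(dz)$, which is exactly the vector appearing in $h(r)$, and leaves the Lévy integral in its $r$-truncated form. Then I would estimate the three pieces of $\mathcal{A} f$ separately: the diffusion part by $\|A\|\,\|\partial^2 f\|_\infty \leq C\|A\|r^{-2}$; the drift part by $|\gamma_r|\,\|\nabla f\|_\infty \leq C|\gamma_r|r^{-1}$; and the Lévy integral by splitting $|z|<r$ (Taylor expand to second order to gain $|z|^2\|\partial^2 f\|_\infty \leq Cr^{-2}|z|^2$) and $|z|\geq r$ (use $|f(y+z)-f(y)|\leq 2\|f\|_\infty$). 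Together these two tails combine to the integral $\int\min(1,|z|^2/r^2)\,\nu(dz)$ appearing in $h(r)$.

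The only mildly delicate step is the rewriting of the Lévy integrand via the $r$-cutoff and justifying the Taylor bound uniformly in $y\in B_r$; the constants involved are dimensional, independent of the set $D$ and of the specific triplet $(A,\nu,\gamma)$. Combining the estimates yields $|\mathcal{A} f(y)|\leq C h(r)$ with $C=C(d)$, and Dynkin's formula then gives
$$P^x(|X_{\tau_D}|\geq r)\leq E^x f(X_{\tau_D})\leq C h(r) E^x\tau_D,$$
as required.
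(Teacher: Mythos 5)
Your proposal is correct and follows essentially the same route as the paper: a $C_b^2$ cutoff function vanishing on $B_{r/2}$ and equal to $1$ off $B_r$, Dynkin's formula, re-centering the Lévy compensator at radius $r$ to produce the effective drift $\gamma_r$, and then estimating the diffusion, drift, and jump parts separately (second-order Taylor for $|z|<r$, boundedness for $|z|\ge r$). The paper realizes the test function concretely as $g(|y|/r)$ for a fixed radial profile $g$, which makes the dimensional constants explicit, but this is only a cosmetic difference from your construction.
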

\begin{proof} For $f\in C^2_b(\Rd)$, by Dynkin formula we have
\begin{eqnarray}\label{gen}
	G_D({\cal A}f)(x)=  E^xf(X_{\tau_{D}})-f(x),\quad x\in \Rd.
\end{eqnarray}

There is a non-decreasing function $g\in C^2_b([0,\infty))$  such that $g(s)=0$, for $0\le s\le 1/2$, $g(s)=1$, for $s\ge 1$.
Let $c_1=\sup_{s}|g''(s)|$, then $\sup_{s}\frac{g'(s)}{s}\leq \frac{c_1}{2}$. We put $f(y)=g(|y|)$ and $f_r(y)=f(yr^{-1})$.
Recall that $\mathrm{Tr}(A)\leq d||A||$. Hence
\begin{equation}\label{DiffPartBound}\sum_{j,k}A_{jk}\partial^2_{jk} f(y)=\left(g''(|y|)-\frac{g'(|y|)}{|y|}\right)\frac{\left<y,Ay\right>}{|y|^2}+\frac{g'(|y|)}{|y|}\mathrm{Tr}(A)\leq c_1\left(1+\frac{d}{2}\right)||A||.\end{equation}
Since $g'(s)=0$, for $s\geq 1 $,
 $$\left<z,\nabla f(y)\right>=g'(|y|)\frac{\left<z,y\right>}{|y|}\leq \frac{c_1}{2}|z|.$$
By (\ref{DiffPartBound}), for $|z|<r$,
$$F_r(y,z)=f_r(y+z)-f_r(y)
-{\bf 1}_{|z|<r}\left<z,\nabla f_r(y)\right>\leq\frac{|z|^2}{2}\sup_y \sum_{j,k}\partial^2_{jk} f_r(y) \leq c_1\frac{d+2 }{4} \frac{|z|^2}{r^2}.$$
And, for $|z|\geq r$, $F_r(x,y)\leq 1$. By (\ref{generator}) we have
\begin{eqnarray}
{\cal A} f_r(y)&=& \int_{\R^d} F_r(y,z)\nu(dz)+ \left<\gamma+\int_{\R^d}\left({\bf 1}_{|z|<r}-{\bf 1}_{|z|<1}\right)z\nu(dz),\nabla f_r(y)\right>\nonumber\\&&+\sum_{j,k}A_{jk}\partial^2_{jk} f_r(y)\nonumber\\
&\le& c_1\frac{d+2}{4}\int_{|z|<r}\left(1\wedge \frac{|z|^2}{r^2}\right)\nu(dz) + c_1\frac{d+2}{2}\frac{||A||}{r^2}\nonumber\\&&+\frac{c_1}{2}\frac{\left|\gamma+\int_{\R^d}\left({\bf 1}_{|z|<r}-{\bf 1}_{|z|<1}\right)z\nu(dz)\right|}{r}.\label{gen2}
\end{eqnarray}
Applying
(\ref{gen}) to $f_r(y)$, we get
\begin{eqnarray}\label{gen1}
	G_D({\cal A}f_r)(x)=  E^x f_r(X_{\tau_{D}}), \quad |x|\le r/2.
\end{eqnarray}
Since $P^x\left(|X_{\tau_{D}}|\ge r\right)\le E^x f_r(X_{\tau_{D}})$ and $G_D \textbf 1(x)=E^x\tau_D$, the estimates (\ref{gen2}) and (\ref{gen1}) provide the conclusion.
\end{proof}

\begin{lemma}\label{PsiStarApprox}
For any $r\geq 0$,
$$\frac{1}{8(1+2d)}\left(||A||r^2+ \int_\Rd\left(1\wedge (r|z|)^2\right)\nu(dz)\right) \leq \psi^*(r)\leq 2 \left(||A||r^2+ \int_\Rd\left(1\wedge (r|z|)^2\right)\nu(dz)\right).$$
\end{lemma}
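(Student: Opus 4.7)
The plan is to exploit the fact that only the real part of $\psi$ matters: the drift $\gamma$ cancels, so
\[
\Re\psi(x)=\langle x,Ax\rangle+\int_{\R^d}\bigl(1-\cos\langle x,z\rangle\bigr)\,\nu(dz),
\]
with both summands non-negative. The two inequalities of the lemma will be handled separately.

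For the upper bound I will just use pointwise bounds: for $|x|\le r$, $\langle x,Ax\rangle\le\|A\|r^2$ and $1-\cos t\le 2\wedge(t^2/2)\le 2(1\wedge t^2)$ together with $|\langle x,z\rangle|\le r|z|$. Taking the supremum gives
\[
\psi^*(r)\le \|A\|r^2+2\int_{\R^d}(1\wedge(r|z|)^2)\,\nu(dz),
\]
which is at most twice the quantity on the right, hence the claimed upper bound.

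For the lower bound I will combine two estimates via $\max(a,b)\ge(a+b)/2$. First, taking $x=rv$ with $v$ a unit eigenvector of $A$ for the top eigenvalue gives $\psi^*(r)\ge\Re\psi(rv)\ge r^2\|A\|$. Second, averaging $\Re\psi$ uniformly over $B_r$ and using Fubini with the rotational invariance of $B_r$,
\[
\psi^*(r)\ge\frac{1}{|B_r|}\int_{B_r}\Re\psi(x)\,dx=\frac{r^2\,\mathrm{Tr}(A)}{d+2}+\int_{\R^d}\bigl(1-\phi_d(r|z|)\bigr)\nu(dz),
\]
where $\phi_d(u):=|B_1|^{-1}\int_{B_1}\cos(u y_1)\,dy$. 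The key auxiliary inequality I need is $1-\phi_d(u)\ge c_d(1\wedge u^2)$ with $c_d$ of order $1/d$. For small $u$ this follows from the pointwise bound $1-\cos t\ge t^2/2-t^4/24$ (valid on all of $\R$) integrated over $B_r$, which yields
\[
1-\phi_d(u)\ge\frac{u^2}{2(d+2)}\Bigl(1-\frac{u^2}{4(d+4)}\Bigr)\ge\frac{u^2}{4(d+2)}\text{ whenever }u^2\le 2(d+4).
\]
Combining the two lower estimates, using $\mathrm{Tr}(A)\ge\|A\|$, and tracking the constants should produce the stated $1/(8(1+2d))$.

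Main obstacle. The only substantive piece will be extending $1-\phi_d(u)\ge c_d$ from the small-$u$ regime to all $u\ge 1$ with a constant compatible with $1/(8(1+2d))$ after combination. No pointwise inequality of the form $1-\cos t\ge c(1\wedge t^2)$ can hold, since the left side vanishes at $t=2\pi k$, so the large-$u$ bound is genuinely averaged; I would finish it either through Bessel-type asymptotics of $\phi_d$ or, more cleanly, by a bootstrap through Lemma \ref{fStarScalling} applied to $\psi^*$ itself, and then carefully verify that the resulting constant matches the statement.
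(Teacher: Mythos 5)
Your upper bound is fine and coincides with the paper's. The lower bound, however, contains a genuine gap that you yourself flag as the ``main obstacle'': the inequality $1-\phi_d(u)\geq c_d(1\wedge u^2)$ with $c_d$ of order $1/d$ is established only for $u^2\leq 2(d+4)$, and the large-$u$ regime --- which is the entire content of the bound, since it is what controls $\nu(\{|z|\geq 1/r\})$ --- is deferred to unspecified ``Bessel-type asymptotics'' or an unexecuted ``bootstrap''. Neither route is carried out. The Bessel route is delicate if one insists on an explicit dimension-dependent constant: $\phi_d(u)=\Gamma(d/2+1)(2/u)^{d/2}J_{d/2}(u)$ oscillates, and one must show it stays uniformly below $1-c_d$ for all $u\geq\sqrt{2(d+4)}$. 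As written, the proof is incomplete precisely at the step that makes the lemma nontrivial (you correctly note that no pointwise bound $1-\cos t\geq c(1\wedge t^2)$ exists, so the averaging cannot be avoided).

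The paper closes exactly this gap by a different choice of averaging weight. Instead of the uniform measure on $B_r$, it integrates $1-\cos\langle x,y\rangle$ against the Gaussian mixture $g(y)=\frac12\int_0^\infty(2\pi s)^{-d/2}e^{-|y|^2/(2s)}e^{-s/2}\,ds$, for which the identity $\int_{\R^d}\bigl(1-\cos\langle x,y\rangle\bigr)g(y)\,dy=\frac{|x|^2}{1+|x|^2}\geq\frac12\min(1,|x|^2)$ holds exactly for all $x$, with no small/large case split and no Bessel estimates. The price is that the weighted average $\int\tilde\psi(ry)g(y)\,dy$ must then be converted back into $\sup_{|z|\leq r}\tilde\psi(z)$; this is done by applying Lemma \ref{fStarScalling} to the negative definite function $\tilde\psi(z)=\int(1-\cos\langle z,y\rangle)\nu(dy)$, giving $\tilde\psi(ry)\leq 2(1+|y|^2)\sup_{|z|\leq r}\tilde\psi(z)$, and the moment computation $\int(1+|y|^2)g(y)\,dy=1+2d$ yields the factor $4(1+2d)$ for the jump part, hence $8(1+2d)$ overall. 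To repair your argument, replace the uniform weight by $g$ (equivalently: bound $1\wedge u^2\leq 2u^2/(1+u^2)$, use the representation above, apply Fubini, and then Lemma \ref{fStarScalling}); the constants then come out exactly as stated.
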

\begin{proof}
Let us observe that
$$\psi^*(r)\leq \left(\sup_{|z|\leq r}\left<z,Az\right>+\sup_{|z|\leq r}\int_\Rd(1-\cos\left<z,y\right>)\nu(dy)\right)\leq 2 \psi^*(r).$$
Since $\sup_{|z|\leq r}\left<z,Az\right>={||A||}{r^2}$  it remains to prove
\begin{equation}\label{hApprox1}
\frac{1}{4(1+2d)}\int_{\Rd} \min(1, (|z|r)^2)\ \nu(dz)\leq\sup_{|z|\leq r}\int_\Rd(1-\cos\left<z,y\right>)\nu(dy)\leq 2\int_{\Rd} \min(1, (|z|r)^2)\ \nu(dz).
\end{equation}
 Let $\tilde{\psi}(z)=\int(1-\cos\left<z,y\right>)\nu(dy)$.
 Notice that (see e.g. \cite{JakobSchilling}, (5.4)),$$\frac{|x|^2}{1+|x|^2}=\int_{\R^d}\left(1-\cos(\left<x,y\right>)\right)g(y)dy,$$
 where
 $$g(y)=\frac{1}{2}\Z (2\pi s)^{-d/2}e^{-\frac{|y|^2}{2s}}e^{-\frac{s}{2}}ds.$$
Hence, by the Fubini-Tonelli theorem
\begin{eqnarray*}\int_{\Rd} \min(1, (|z|r)^2)\ \nu(dz)&\leq& 2\int_{\Rd} \frac {(|z|r)^{2}}{1+(|z|r)^{2}} \nu(dz)\\
&=&2\int_{\Rd} \int_{\R^d}\left(1-\cos(\left<zr,y\right>)\right)g(y)dy \nu(dz)\\&=&2\int_{\R^d}\tilde{\psi}(yr)g(y)dy.
\end{eqnarray*}

Since $\tilde{\psi}$ is a negative definite function, by Lemma \ref{fStarScalling} we have
$$\int_{\Rd} \min(1, (|z|r)^2)\ \nu(dz)\leq 4 \sup_{|z|\leq r}\tilde{\psi}(z)\int_{\Rd}(1+|y|^2)g(y)dy=4(1+2d)\sup_{|z|\leq r}\tilde{\psi}(z).$$
Since $1-\cos u=2\sin^2\frac{u}{2}\leq 2\left(1\wedge |u|^2\right)$,
$$\int_{\Rd} \min(1, |z|^2|x|^2)\ \nu(dz)\geq \frac{1}{2}\tilde{\psi}(x).$$
Hence
$$\sup_{|x|\leq r}\tilde{\psi}(z)\leq 2\sup_{|x|\leq r} \int_{\Rd} \min(1, (|z||x|)^2)\ \nu(dz)=2\int_{\Rd} \min(1, (|z|r)^{2})\ \nu(dz),$$
which completes the  proof of  the inequality (\ref{hApprox1}).
\end{proof}

Since for symmetric processes $h(r)=\frac{||A||}{r^{2}}+ \int\left(1\wedge \frac{|z|^2}{r^2}\right)\nu(dz)$, we obtain the following corollary.
\begin{corollary}\label{hApproxLemma}Let $X_t$ be symmetric, then
\begin{equation}\label{hApprox}\frac{1}{2}\psi^*(r^{-1})\leq h(r)\leq 8(1+2d)\psi^*(r^{-1}) .\end{equation}
\end{corollary}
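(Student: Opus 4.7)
The plan is essentially to observe that this corollary is an immediate consequence of Lemma \ref{PsiStarApprox} applied with $r$ replaced by $r^{-1}$, once the quantity $h(r)$ is simplified in the symmetric case. So the work has two parts: first justify the simplified expression for $h(r)$, then do the substitution.

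For the simplification, I would use that the symmetry of $X_t$ forces $\nu$ to be a symmetric measure on $\R^d$ and forces the drift term, in the form used in Lemma \ref{Harm1}, to vanish. Concretely, by symmetry $\int z\,\textbf{1}_{|z|<r}\,\nu(dz)=0$ and $\int z\,\textbf{1}_{|z|<1}\,\nu(dz)=0$, and $\gamma=0$ for a symmetric L\'evy triplet (equivalently, $\psi$ is real-valued, so the linear term in the characteristic exponent must vanish). Therefore the middle summand
\[
\left|\gamma+\int z\left(\textbf{1}_{|z|<r}-\textbf{1}_{|z|<1}\right)\nu(dz)\right|r^{-1}
\]
in the definition of $h(r)$ from Lemma \ref{Harm1} is zero, leaving
\[
h(r)=\|A\|r^{-2}+\int_{\Rd}\min\bigl(1,|z|^2r^{-2}\bigr)\nu(dz).
\]

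For the second part, I would apply Lemma \ref{PsiStarApprox} with the argument $r$ replaced by $r^{-1}$ (the lemma is valid for any nonnegative real, so this is legitimate). This gives
\[
\frac{1}{8(1+2d)}\left(\|A\|r^{-2}+\int_{\Rd}\bigl(1\wedge |z|^2r^{-2}\bigr)\nu(dz)\right)\leq \psi^*(r^{-1})\leq 2\left(\|A\|r^{-2}+\int_{\Rd}\bigl(1\wedge |z|^2r^{-2}\bigr)\nu(dz)\right),
\]
which by the identity for $h(r)$ above reads $\frac{1}{8(1+2d)}h(r)\leq \psi^*(r^{-1})\leq 2h(r)$. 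Rearranging the two inequalities separately yields exactly $\frac{1}{2}\psi^*(r^{-1})\leq h(r)\leq 8(1+2d)\psi^*(r^{-1})$.

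There is no real obstacle here; the only point that requires any care is verifying that "symmetric" is being used in the sense that both $\nu$ is symmetric \emph{and} the drift compensator vanishes, so that $h(r)$ collapses to the two-term expression. Once that is in place, the inequality is just a change of variable in Lemma \ref{PsiStarApprox}.
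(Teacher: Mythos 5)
Your proof is correct and matches the paper's: the paper also obtains this corollary by observing that symmetry kills the drift term so that $h(r)=\|A\|r^{-2}+\int\left(1\wedge|z|^2r^{-2}\right)\nu(dz)$, and then directly invoking Lemma \ref{PsiStarApprox} at $r^{-1}$. The rearrangement of the two inequalities is exactly as in the paper.
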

\begin{remark}\label{hApproxRem}
Instead of a direct calculation one can compare  Pruitt's result \cite{Pruit} and \cite{Schilling1998}, Remark 4.8 to obtain $\psi^*(r^{-1})\approx h(r)$ under the assumption that there exists a constant $c$ such that $|\Im \psi(x)|\leq c \Re \psi (x)$, $x\in \Rd$.
\end{remark}

For subordinate Brownian motions  easy calculations  improve (\ref{hApprox}). Notice that $\phi$ is increasing.
\begin{remark}Let $X_t$ be a subordinate Brownian motion, then for $r>0$,$$ \frac{1}{2}\phi(r^{-2})\leq h(r)\leq (1+2d)\phi(r^{-2}).$$
\end{remark}
%\begin{proof}
%Since $\gamma(\delta,t)\leq t\gamma(\delta-1,t)$ and $\Gamma(\delta,t)\leq t^{-1}\Gamma(\delta+1,t)$
%\begin{eqnarray*}\int_{|z|\geq r}g_{u}(z)dz+r^{-2}\int_{|z|\leq r}|z|^2g_u(z)dz&=&\frac{\Gamma\left(\frac{d}{2},\frac{r^2}{4u}\right)+\frac{4u}{r^2}\gamma\left(\frac{d}{2}+1,\frac{r^2}{4u}\right)}{\Gamma\left(\frac{d}{2}\right)}\\&\leq& 1 \wedge \frac{2du}{r^2}\leq\left(1-e^{-\frac{1}{2d}}\right)^{-1}\left(1-e^{-\frac{u}{r^2}}\right).\end{eqnarray*}
%
%By (\ref{levyDensity})
%\begin{eqnarray*}
%\int_{\Rd} \min(1, |z|^2/r^2)\ \nu(dz) &=&\Z \left(\int_{|z|\geq r}g_{u}(z)dz+\int_{|z|\leq r}|z|^2g_u(z)dz\right)\mu(du)\\&\leq&  \left(1-e^{-\frac{1}{2d}}\right)^{-1}\Z\left(1-e^{-\frac{u}{r^2}}\right)\mu(du).
%\end{eqnarray*}
%Hence
%\begin{eqnarray*}h(r)&\leq& \left(1-e^{-\frac{1}{2d}}\right)^{-1}\frac{e}{e-1} \Z\left(1-e^{-\frac{u}{r^2}}\right)\mu(du)+br^{-2}\leq \left(1-e^{-\frac{1}{2d}}\right)^{-1} \phi(r^{-2}).
%\end{eqnarray*}
%The claim is consequence of the following inequality $(1-e^{-s})^{-1}\leq s^{-1}+1$, for $s>0$.
%\end{proof}

\begin{corollary}\label{JumpProb}Let $X_t$ be  symmetric.
There exists a constant $C=C(d)$ such that, for $r>0$, $s\leq r/2$,
\begin{equation}\label{JumpProb1}P^x(|X_{\tau_{B(0,s)}}|\geq r)\leq  C\frac{\psi^*(r^{-1})}{\psi^*(s^{-1})},\qquad |x|\leq s.\end{equation}
Moreover if $\psi$ satisfies WLSC($\beta,\theta,C^*$), there is a constant $C=C\left(d,\beta,C^*,\frac{\tilde{\psi}(\theta)}{\psi^*(\theta)}\right)$ such that, for $0<r<\theta^{-1}$, $s\leq r/2$,
$$P^x(|X_{\tau_{B(0,s)}}|\geq r)\leq C \left(\frac{s}{r}\right)^{\beta}, \qquad |x|\leq s.$$
\end{corollary}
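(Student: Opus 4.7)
The plan is to apply Lemma~\ref{Harm1} to $D=B(0,s)$, couple this with a standard upper bound on the mean exit time, and then invoke Lemma~\ref{star} to translate the ratio of $\psi^*$ values into a polynomial in $s/r$. Since $s\leq r/2$, we have $B(0,s)\subset B_r$, and for $|x|\leq s$ also $x\in B(0,s)\cap B_{r/2}$ (the case $|x|=s$ being trivial). Lemma~\ref{Harm1} combined with Corollary~\ref{hApproxLemma} (using symmetry) thus gives
\begin{equation}\label{eq:pexitplan}
P^x\bigl(|X_{\tau_{B(0,s)}}|\geq r\bigr)\leq C\,h(r)\,E^x\tau_{B(0,s)}\leq c_1\,\psi^*(r^{-1})\,E^x\tau_{B(0,s)}.
\end{equation}

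The next step is the Pruitt-type estimate $E^x\tau_{B(0,s)}\leq c_2/\psi^*(s^{-1})$ for $|x|\leq s$, with $c_2=c_2(d)$. I would derive this from the Chebyshev-type bound $P^0(|X_t|\geq\rho)\leq c\,t\,\psi^*(\rho^{-1})$, which follows from Lemma~\ref{PsiStarApprox} by integrating the elementary inequality $1-\Re E^0 e^{i\langle\xi,X_t\rangle}\leq t\,\Re\psi(\xi)$ over $\xi\in B(0,1/\rho)$ and applying the standard estimate that bounds $P^0(|X_t|\geq\rho)$ by a dimensional constant times the mean of $1-\Re\hat P_t$ on $B(0,1/\rho)$. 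An iteration via the Markov property upgrades this tail bound into the desired mean exit time estimate; stray factors of $2$ (arising because, for $|x|\leq s$, one only has $|X_t-x|\leq 2s$ rather than $|X_t|\leq s$) are absorbed through Lemma~\ref{fStarScalling}. Substituting into \eqref{eq:pexitplan} yields the first inequality of the corollary.

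For the second inequality, assume $\psi$ satisfies WLSC$(\beta,\theta,C^*)$. By Lemma~\ref{star}, $\psi^*$ satisfies WLSC$(\beta,\theta,C')$ with $C'=(C^*)^2\tilde{\psi}(\theta)/\psi^*(\theta)$. For $0<r<\theta^{-1}$ and $s\leq r/2$, apply this scaling inequality to $\psi^*$ with $u=r^{-1}\geq\theta$ and $\lambda=r/s\geq 2$:
$$\psi^*(s^{-1})=\psi^*(\lambda u)\geq C'(r/s)^{\beta}\psi^*(r^{-1}),$$
so $\psi^*(r^{-1})/\psi^*(s^{-1})\leq(C')^{-1}(s/r)^{\beta}$, and the first inequality gives the polynomial bound with the asserted dependence of the constant. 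The main technical hurdle is really the Pruitt-type mean exit time estimate; everything else reduces to the scaling machinery already set up in Lemmas~\ref{fStarScalling} and~\ref{star}. One must keep constants tracked carefully so that the final dependence in the first claim is genuinely only on $d$.
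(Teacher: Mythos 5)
Your proposal follows essentially the same route as the paper: Lemma~\ref{Harm1} plus Corollary~\ref{hApproxLemma} for the exit probability, the Pruitt-type bound $E^x\tau_{B_s}\leq c/\psi^*(s^{-1})$ for the mean exit time (the paper simply cites Schilling's Remark~4.8 together with Lemma~\ref{fStarScalling} rather than re-deriving it via the Chebyshev/Markov iteration you sketch, but that derivation is the standard proof of the cited estimate), and Lemma~\ref{star} for the second claim. The one step you skip is the verification of the hypothesis of Lemma~\ref{star}, namely that $\tilde{\psi}(u)=\sup_{|x|=u}\psi(x)$ is strictly positive on $(0,\infty)$; the paper checks this by noting that $\tilde{\psi}(u_0)=0$ would force, via subadditivity of $\sqrt{\psi}$ and the almost-monotonicity estimate (\ref{tilde}), that $\psi\equiv 0$, which is excluded. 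That check should be added, but otherwise the argument is correct and matches the paper's.
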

\begin{proof}
Since $X_t$ is symmetric, by \cite{Schilling1998}, Remark 4.8, and Lemma \ref{fStarScalling},   we get
$$E^x\tau_{B_s}\leq E^x\tau_{B(x,2s)}\leq c_1  \frac{1}{\psi^*(s^{-1})},$$
where $c_1=c_1(d)$. Hence, the first claim follows by Lemma \ref{Harm1} and Corollary \ref{hApproxLemma}, while
the second claim is a consequence of Lemma \ref{star} and (\ref{JumpProb1}). We only have to check that $\tilde{\psi}(u)=\sup_{|x|=u}\psi(x)$ is positive on $(0,\infty)$. Suppose that there exists $u_0>0$, such that $\tilde{\psi}(u_0)=0$. Then, by subadditivity of $\sqrt{\psi}$, we have that $\tilde{\psi}(nu_0)=0$, for any $n\in\mathbb{N}$. Hence, by (\ref{tilde}),  $\tilde{\psi}(x)=0$, for any $|x|\geq u_0\vee \theta$. That implies that $\psi\equiv0$, what we exclude.
\end{proof}

\section{Isotropic Unimodal L\'{e}vy Processes}
In this section we assume that the process $X_t$ is isotropic unimodal. In the first subsection we obtain estimates for the  potential measure  and capacity of balls, which are essential for the rest of the paper. Next, we use them to get estimates for a potential kernel and Green function of the ball. The next subsection contains some improvements of these estimates in the  case of subordinate Brownian motions. Subsection 3.3 is devoted to prove the Harnack inequality and regularity estimates for harmonic functions. In the last subsection we give some examples.

By $\psi_0$, $\nu_0$ and $G_0$ we denote radial profiles of $\psi$, $\nu$ and $G$, respectively. For instance $\psi(x)=\psi_0(|x|)$.

\begin{lemma}{(\cite{Watanabe})}\label{LemmaWatanabe}
Let $X_t$ be a symmetric L\'{e}vy process, then the following conditions are equivalent:
\begin{description}
  \item[(1)] $X_t$ is isotropic unimodal.
  \item[(2)] $G^{\lambda}(dx)$ is isotropic unimodal, for  $\lambda> 0$ (in the transient case for $\lambda\geq 0$), where $G^{\lambda}(dx)=\Z e^{-\lambda t}P_t(dx)dt$ and $P_t(dx)= P^0(X_t\in dx)$.
  \item[(3)] $A=aI$, for some $a\geq 0$ and $\nu$ is isotropic unimodal.
\end{description}
\end{lemma}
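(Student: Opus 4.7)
The plan is to prove the three conditions equivalent via the chain $(1)\Rightarrow(2)\Rightarrow(1)$ together with $(1)\Leftrightarrow(3)$. All four arrows will rest on one structural fact that I would first set up: the cone of isotropic unimodal measures on $\Rd$ is closed under (i) weak convergence, (ii) integration against a positive parameter measure, and (iii) convolution. Property (iii) is the only non-trivial ingredient; in dimension one it is Wintner's classical theorem, and in general I would use the representation of an isotropic unimodal $\mu$ as a non-negative mixture of uniform ball measures $U_s= |B_s|^{-1}\mathbf{1}_{B_s}$ (plus a possible atom at $0$), obtained by Stieltjes integration of $-dm_0$, together with the elementary monotonicity of $|B_r\cap B(x,s)|$ in $|x|$ which forces $U_r*U_s$ to be isotropic unimodal.

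With (i)--(iii) in hand, $(1)\Rightarrow(2)$ is immediate since $G^\lambda(dx)=\int_0^\infty e^{-\lambda t}P_t(dx)\,dt$ is exactly the kind of positive integration addressed by (ii). For $(2)\Rightarrow(1)$ I would use the Fourier identity $\widehat{G^\lambda}(x)=(\lambda+\psi(x))^{-1}$ to obtain, by induction on $n$, the measure identity
$$(-1)^n\partial_\lambda^n G^\lambda=n!\,(G^\lambda)^{*(n+1)},$$
so every such convolution power is isotropic unimodal by (iii); Post--Widder inversion then exhibits $P_t$ as the weak limit
$$P_t=\lim_{n\to\infty}(n/t)^{n+1}(G^{n/t})^{*(n+1)},$$
which lies in the cone by (i).

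For $(1)\Rightarrow(3)$, rotation invariance of $P_t$ forces $\psi$ to be rotation invariant, and uniqueness of the L\'evy--Khintchine triplet reads off $A=aI$ together with $\nu$ rotation invariant; the non-increasing radial profile of $P_t$ transfers to $\nu$ via the vague convergence $t^{-1}P_t\to\nu$ on $\{|z|\geq\varepsilon\}$ followed by $\varepsilon\to 0$. For $(3)\Rightarrow(1)$ I would decompose $X_t=\sqrt{2a}B_t+Y_t$ into independent Gaussian and pure-jump parts; the Brownian part is visibly isotropic unimodal, so by (iii) it suffices to handle $Y_t$. I would approximate $\nu$ from below by finite isotropic unimodal $\nu_k\uparrow\nu$, using a radial truncation that preserves the non-increasing profile; the corresponding $Y^{(k)}_t$ are compound Poisson with symmetric isotropic unimodal jump distribution, so
$$P^{Y^{(k)}}_t=e^{-c_kt}\sum_{n=0}^{\infty}\frac{(c_kt)^n}{n!}\,\mu_k^{*n}$$
lies in the cone by (ii) and (iii), and (i) closes the passage to the limit; independence then places the law of $X_t=\sqrt{2a}B_t+Y_t$ back in the cone via (iii). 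The main obstacle throughout will be establishing (iii) in dimension $d\geq 2$; once granted, every remaining step is a routine Laplace-inversion or approximation argument.
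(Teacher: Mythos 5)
The paper does not prove this lemma at all: it is quoted verbatim from Watanabe's 1983 paper, with a bare citation, so there is no internal argument to compare yours against. Your reconstruction is, however, essentially the classical proof and is sound. The one load-bearing ingredient you correctly isolate --- closure of the cone of isotropic unimodal measures under convolution --- does hold, and your sketch of it closes: the monotonicity of $|B_r\cap B(x,s)|$ in $|x|$ is exactly Anderson's theorem applied to the symmetric convex body $B_r$ and the indicator $\mathbf{1}_{B_s}$, and the Khinchine-type representation $\mu=a\delta_0+\int_0^\infty U_s\,\rho(ds)$ with $\rho(ds)=|B_s|(-dm_0)(s)$ then transports it to arbitrary pairs. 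The remaining arrows are routine as you describe: the resolvent identity $\partial_\lambda^n G^\lambda=(-1)^n n!\,(G^\lambda)^{*(n+1)}$ plus Post--Widder gives $(2)\Rightarrow(1)$ (note the approximants $(n/t)^{n+1}(G^{n/t})^{*(n+1)}$ are probability measures since $G^\lambda(\Rd)=1/\lambda$, so weak closedness of the cone applies cleanly), and for $(3)\Rightarrow(1)$ your caveat about the truncation is the right one: the naive restriction $\nu\mathbf{1}_{\{|x|>1/k\}}$ destroys monotonicity of the profile, but $\nu_k(x)=\min(\nu_0(|x|),k)\mathbf{1}_{\{|x|<k\}}$ is finite, radially non-increasing, increases to $\nu$, and monotone convergence of $\int(1-\cos\langle x,z\rangle)\nu_k(dz)$ together with L\'evy's continuity theorem closes the limit. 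Two small bookkeeping points worth making explicit if you write this up: the $n=0$ term $\delta_0$ in the compound Poisson expansion must be admitted into the cone, which is consistent with the paper's convention that unimodality constrains the density only for $x\neq 0$; and in $(1)\Rightarrow(3)$ the passage from $t^{-1}P_t\to\nu$ to monotonicity of $\nu_0$ needs a Helly selection on the profiles over each annulus $\{|x|\geq\varepsilon\}$, which is the same argument that gives weak closedness in (i).
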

Since $X_t$ is isotropic its distribution is supported by the whole space, so it is transient for $d\geq 3$. Notice that  $G(\{0\})>0$ if and only if $\psi$ is bounded.

In the following proposition we prove that the characteristic exponent of an isotropic unimodal L\'{e}vy process is almost increasing.
\begin{proposition}\label{psiStarPsi}We have, for any $x\in\Rd$,
$$\psi^*(|x|)\leq 12\psi(x).$$
\end{proposition}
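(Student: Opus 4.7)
The plan is to reduce the inequality to dimension one and then dispatch the one-dimensional statement by an elementary analysis of $g(t)=1-(\sin t)/t$. By Lemma~\ref{LemmaWatanabe}, isotropic unimodality gives $A=aI$ with some $a\ge 0$ and $\nu(dz)=\nu_0(|z|)\,dz$ with $\nu_0$ non-increasing on $(0,\infty)$, so $\psi(x)=\psi_0(|x|)$ is radial and $\psi^*(|x|)=\sup_{0\le s\le|x|}\psi_0(s)$. Passing to the one-dimensional projected L\'evy process $Y_t=\langle X_t,e_1\rangle$, its characteristic exponent is $\psi_Y(r)=\psi(re_1)=\psi_0(|r|)$, and by Fubini its L\'evy measure has symmetric density $t\mapsto\int_{\R^{d-1}}\nu_0(\sqrt{t^2+|y|^2})\,dy$, which is non-increasing in $|t|$ since $\nu_0$ is. Hence the problem reduces to proving the inequality for a one-dimensional symmetric L\'evy process with Gaussian coefficient $a\ge 0$ and symmetric unimodal L\'evy measure; and since $as^2\le ar^2$, everything further reduces to showing that the pure-jump part $\tilde\psi_0(r):=\int(1-\cos rz)\,\nu_Y(dz)$ is $C$-almost non-decreasing with $C\le 12$.

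Using the layer-cake representation $\nu_{Y,0}(s)=\int_s^\infty d\varrho(u)$, where $\varrho=-d\nu_{Y,0}$ is a positive measure on $(0,\infty)$, which writes $\nu_Y$ as a mixture of uniform measures on intervals, Fubini gives
\begin{equation*}
\tilde\psi_0(r)=\int_0^\infty\!\!\int_{-u}^u(1-\cos rz)\,dz\,d\varrho(u)=2\int_0^\infty u\,g(ru)\,d\varrho(u),\qquad g(t)=1-\tfrac{\sin t}{t}.
\end{equation*}
If the pointwise bound $g(su)\le C\,g(ru)$ holds for every $0\le s\le r$ and every $u>0$, then $\tilde\psi_0(s)\le C\,\tilde\psi_0(r)$. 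Thus everything hinges on showing that $g$ is almost non-decreasing on $[0,\infty)$ with absolute constant at most $12$; this is the main technical step.

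I would prove this by splitting at $t=2$. Writing $g'(t)=(\sin t-t\cos t)/t^2$, the numerator vanishes at $t=0$ and has derivative $t\sin t>0$ on $(0,\pi)$, so it remains strictly positive on $(0,2]\subset(0,\pi)$; consequently $g$ is strictly increasing on $[0,2]$. For $t\ge 2$ the crude bound $|\sin t/t|\le 1/t\le 1/2$ gives $g(t)\in[1/2,3/2]$. Combining the two regimes: if $r\le 2$, monotonicity gives $g(s)\le g(r)$; if $r>2$, then $g(s)\le 3/2$ (using $g(s)\le g(2)\le 3/2$ when $s\le 2$, and the interval bound when $s\in[2,r]$) and $g(r)\ge 1/2$, so $g(s)\le 3\,g(r)$. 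Hence $g(s)\le 3\,g(r)$ for all $0\le s\le r$, which inserted into the integral representation yields $\sup_{s\le r}\tilde\psi_0(s)\le 3\,\tilde\psi_0(r)$ and finally $\psi^*(r)\le ar^2+3\,\tilde\psi_0(r)\le 3(ar^2+\tilde\psi_0(r))=3\,\psi_0(r)\le 12\,\psi_0(r)$. The constant $3$ produced by the argument leaves comfortable slack in the stated bound of~$12$.
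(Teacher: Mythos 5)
Your proof is correct, and the key technical step is genuinely different from the paper's. Both arguments start from the same reduction: unimodality (Lemma \ref{LemmaWatanabe}) gives $\psi_0(r)=ar^2+\tilde\psi_0(r)$ with $\tilde\psi_0(r)=\int(1-\cos rz)\nu_1(z)\,dz$ and $\nu_1$ the non-increasing density of the one-dimensional projection. From there the paper sandwiches $\tilde\psi_0(r)$ between multiples of the monotone quantity $\int(1\wedge(zr)^2)\nu_1(z)\,dz$: the upper bound is the generic $1-\cos u\leq 2(1\wedge u^2)$, while the lower bound is the delicate part, obtained by summing $\int\nu_1$ over the periodic windows where $1-\cos(rz)$ is bounded below and using monotonicity of $\nu_1$ to control the tail $\nu_1[\tfrac{5\pi}{3r},\infty)$. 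You instead invoke the Khinchine representation of a symmetric unimodal measure as a mixture of uniform laws, which converts $\tilde\psi_0(r)$ into $2\int_0^\infty u\,g(ru)\,d\varrho(u)$ with $g(t)=1-\tfrac{\sin t}{t}$, and then the whole statement reduces to the elementary fact that $g$ is almost non-decreasing with constant $3$ (increasing on $[0,2]\subset[0,\pi]$, and pinned in $[1/2,3/2]$ beyond $t=2$). Your route is cleaner, isolates the unimodality input in a single structural representation rather than in an ad hoc summation over periods, and yields the sharper constant $3$ in place of $12$; the paper's route has the minor advantage of producing, as a by-product, the two-sided comparison $\psi_0(r)\approx ar^2+\int(1\wedge(zr)^2)\nu_1(z)\,dz$, which is of the same form as Lemma \ref{PsiStarApprox}. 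The only points worth making explicit in a final write-up are that $\nu_{1}(s)<\infty$ for every $s>0$ and $\nu_1(s)\to 0$ as $s\to\infty$ (both forced by monotonicity and $\int(1\wedge t^2)\nu_1(t)\,dt<\infty$), so that the layer-cake measure $\varrho$ is well defined and Tonelli applies; these are routine.
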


\begin{proof}
Let us define, for $r\geq 0$,  $$\tilde{\psi}_0(r)=2\int_{0}^\infty\big[1-\cos(rz)\big]\nu_1(z)dz,$$ where $\nu_1(z)=\int_{\R^{d-1}}\nu_0(\sqrt{|w|^2+z^2})dw$. Then, by Lemma \ref{LemmaWatanabe}, we have $\psi_0(r)=a r^2+\tilde{\psi}_0(r)$, for some $a\geq 0$, where the factor $a r^2$ corresponds to the continuous part in the L\'evy decomposition of $X_t$.
Since $\nu_1$ is non-increasing on $(0,\infty)$,
\begin{eqnarray*}
\tilde{\psi}_0(r)&\geq& \sum^\infty_{k=0}\int^{(5/3\pi+2k\pi)/r}_{(\pi/3+2k\pi)/r}\nu_1(z)dz\geq \frac{4\pi}{3r}\sum^\infty_{k=0}\nu_1\left(\frac{5/3\pi+2k\pi}{r}\right)\\
&\geq& \frac{2}{3}\sum^\infty_{k=0}\int_{(5/3\pi+2k\pi)/r}^{(5/3\pi+2(k+1)\pi)/r}\nu_1(z)dz=\frac{2}{3}\nu_1\left[\frac{5\pi}{3r},\infty\right).
\end{eqnarray*}
{We also note that $1-\cos u\geq \frac{9}{2\pi^2}u^2$ if $|u|\le \pi/3$.}
We have, \begin{eqnarray*}\tilde{\psi}_0(r)&\geq& \frac{9}{\pi^2}\int^{1/r}_0 (zr)^2\nu_1(dz)+2\big[1-\cos(1)\big]\nu_1\left(\left[\frac{1}{r},\frac{5\pi}{3r}\right)\right)\\&\geq& \frac{9}{\pi^2}\left(\int^{1/r}_0 (zr)^2\nu_1(dz)+ \nu_1\left(\left[\frac{1}{r},\frac{5\pi}{3r}\right)\right)\right).\end{eqnarray*}
Hence,
$$\psi_0(r)\geq ar^2+\frac{1}{3}\Z  \left(1\wedge{(zr)^2}\right) \nu_1(z)dz .$$
Since  the function $\Z \left(1\wedge{(zr)^2}\right) \nu_1(z)dz$ is non-decreasing and $1-\cos u\leq 2(1\wedge u^2)$,
$$\psi^*(r)=\sup_{|x|\leq r}\psi(x) \leq ar^2+\sup_{s\leq r}4 \Z  \left(1\wedge{(zs)^2}\right) \nu_1(z)dz=ar^2+4\Z \left(1\wedge{(zr)^2}\right) \nu_1(z)dz.$$
Finally, we get $\psi^*(r)\leq 12 \psi_0(r)$.
\end{proof}

\subsection{Green function estimates}
\begin{lemma}\label{Laplace_potential_LP} Assume that $d\geq 3$. Let $f(r)=G\left(B_{\sqrt{r}}\right)$. There exists a constant $C_1=C_1(d)$ such that $$\frac{C_1}{\lambda\psi^*\left(\sqrt{\lambda}\right)}\leq \mathcal{L}f(\lambda)\leq \frac{36}{\lambda\psi^*\left(\sqrt{\lambda}\right)},\qquad\lambda>0.$$ \end{lemma}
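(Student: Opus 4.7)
The strategy is to recognise $\mathcal{L}f(\lambda)$ as a Gaussian-weighted average of $1/\psi$ and then exploit the comparability $\psi\approx\psi^*$ (Proposition \ref{psiStarPsi}) together with the quadratic growth control of Lemma \ref{fStarScalling}.

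First I would swap the orders of integration in
$$\mathcal{L}f(\lambda) = \int_0^\infty e^{-\lambda s}\int_{\R^d}\mathbf{1}_{|x|^2<s}\,G(dx)\,ds,$$
which collapses to $\mathcal{L}f(\lambda) = \lambda^{-1}\int_{\R^d} e^{-\lambda|x|^2}G(dx)$. Next, the identity $e^{-\lambda|x|^2}=(4\pi\lambda)^{-d/2}\int e^{-i\langle x,\xi\rangle}e^{-|\xi|^2/(4\lambda)}d\xi$ combined with $\widehat{P_t}(\xi)=e^{-t\psi(\xi)}$ (recall $\psi$ is real since $X_t$ is isotropic) and $\int_0^\infty e^{-t\psi(\xi)}dt = 1/\psi(\xi)$ gives, after another application of Fubini,
$$\mathcal{L}f(\lambda) = \frac{1}{\lambda}\int_{\R^d}\frac{\mu_\lambda(d\xi)}{\psi(\xi)}, \qquad \mu_\lambda(d\xi):=(4\pi\lambda)^{-d/2}e^{-|\xi|^2/(4\lambda)}d\xi.$$
Here $\mu_\lambda$ is a centred Gaussian probability measure with covariance $2\lambda I$, so that $|\xi|^2/(2\lambda) \sim \chi^2_d$ under $\mu_\lambda$; local integrability of $1/\psi$ near the origin is nothing but the transience of $X_t$ in dimension $d\geq 3$.

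The lower bound is now immediate: on the ball $|\xi|\leq\sqrt{\lambda}$ one has $\psi(\xi)\leq\psi^*(\sqrt{\lambda})$, so
$$\mathcal{L}f(\lambda) \geq \frac{\mu_\lambda(B_{\sqrt{\lambda}})}{\lambda\,\psi^*(\sqrt{\lambda})},$$
and $\mu_\lambda(B_{\sqrt{\lambda}}) = P(\chi^2_d\leq 1/2) =: C_1(d) > 0$.

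For the upper bound I would combine Proposition \ref{psiStarPsi}, namely $\psi^*(|\xi|)\leq 12\psi(\xi)$, with the rearrangement of Lemma \ref{fStarScalling}, $\psi^*(\sqrt{\lambda})\leq 2(1+\lambda/|\xi|^2)\psi^*(|\xi|)$, to obtain
$$\int_{\R^d}\frac{\mu_\lambda(d\xi)}{\psi(\xi)} \leq \frac{24}{\psi^*(\sqrt{\lambda})}\left(1 + \lambda\int_{\R^d}\frac{\mu_\lambda(d\xi)}{|\xi|^2}\right).$$
The remaining computation is $\int\mu_\lambda(d\xi)/|\xi|^2 = (2\lambda)^{-1}E[1/\chi^2_d] = 1/(2\lambda(d-2))$, which is finite precisely because $d\geq 3$; the bracket is thus at most $3/2$ and multiplying by $24$ yields the constant $36$. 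The main obstacle is really spotting the Gaussian-measure interpretation in the first step — once it is in place the rest reduces to a standard chi-squared identity — and the dimensional restriction $d\geq 3$ enters exactly through the finiteness of $E[1/\chi^2_d]$, consistent with the remark following Theorem \ref{Holder}.
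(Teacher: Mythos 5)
Your proposal is correct and follows essentially the same route as the paper: both rewrite $\lambda\mathcal{L}f(\lambda)$ as a Gaussian average of $1/\psi$ via the Fourier representation of $e^{-\lambda|x|^2}$, then apply Proposition \ref{psiStarPsi} and Lemma \ref{fStarScalling}, with the identity $E[1/\chi^2_d]=1/(d-2)$ producing the same constant $36$. The only (harmless) deviation is in the lower bound, where you restrict the Gaussian measure to the ball $|\xi|\le\sqrt{\lambda}$ and use monotonicity of $\psi^*$ directly, which is slightly more economical than the paper's use of the $(1+|x|^2)^{-1}$ weight from Lemma \ref{fStarScalling}.
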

\begin{proof}
 Since  $$e^{-|z|^2}= \frac1{({4\pi})^{d/2}}\int_{\R^d} {e^{i \left\langle z,x \right\rangle}}e^{-|x|^2/4}dx, \quad z\in \R^d,$$
 we have, for $\lambda>0$,
$$E e^{-\lambda |X_t|^2}= E  \frac1{({4\pi})^{d/2}}\int_{\R^d}  {e^{i\sqrt{\lambda}\left\langle x,X_t \right\rangle }}{e^{-|x|^2/4}}dx=\frac1{({4\pi})^{d/2}}\int_{\R^d}  {e^{-t\psi(\sqrt{\lambda} x  )}}e^{-|x|^2/4}dx.$$
Integrating with respect to $dt$ we have, $$\lambda{\cal L}f(\lambda)= \frac1{({4\pi})^{d/2}}\int_{\R^d} e^{-|x|^2/4}\frac{dx}{\psi(\sqrt{\lambda} x  )}.$$
By Proposition \ref{psiStarPsi}, $\frac{1}{12}\psi^*(|x|)\leq \psi(x)\leq \psi^*(|x|)$. Hence,
$$\frac{1}{({4\pi})^{d/2}}\int_{\R^d} e^{-|x|^2/4}\frac{dx}{\psi^*(\sqrt{\lambda} |x|  )}\leq \lambda{\cal L}f(\lambda)\leq \frac{12}{({4\pi})^{d/2}}\int_{\R^d} e^{-|x|^2/4}\frac{dx}{\psi^*(\sqrt{\lambda} |x|  )}.$$
By Lemma \ref{fStarScalling},
\begin{eqnarray*}
\frac{1}{2\psi^*(\sqrt{\lambda})}\frac{1}{1+|x|^2}&\leq &\frac{1}{\psi^*(\sqrt{\lambda} |x|  )}\leq\frac{2}{\psi^*(\sqrt{\lambda})}\frac{1+|x|^2}{|x|^2}.
\end{eqnarray*}
The above estimates imply $$\frac{C_1}{\psi^*(\sqrt{\lambda})}\leq \lambda{\cal L}f(\lambda)\leq \frac{36}{\psi^*(\sqrt{\lambda})},$$
where $C_1=\frac{1}{2^{d+1}\pi^{d/2}}\int_{\R^d}e^{-\frac{|x|^2}{4}}\frac{1}{1+|x|^2}dx$.
\end{proof}

\begin{proposition}\label{PotMeasureLP} Assume that $d\geq 3$. There is a constant $C_2=C_2(d)$ such that
$$\frac{C_2}{\psi^*(r^{-1})}\leq G\left(B_r\right)\leq  \frac{36e}{\psi^*(r^{-1})},\qquad r>0.$$
\end{proposition}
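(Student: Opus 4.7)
The plan is to invert the Laplace-transform bound of Lemma \ref{Laplace_potential_LP} by a simple Tauberian-type argument that exploits the monotonicity of $f(s)=G(B_{\sqrt{s}})$ in $s$ (balls are nested) and the two-sided scaling Lemma \ref{fStarScalling} for $\psi^*$. No further property of $G$ is needed beyond these two facts.

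For the \emph{upper} bound, since $f$ is non-decreasing and $f\geq 0$, I would write
$$\mathcal{L}f(\lambda)\;\geq\;\int_{1/\lambda}^{\infty}e^{-\lambda s}f(s)\,ds\;\geq\;f(1/\lambda)\int_{1/\lambda}^{\infty}e^{-\lambda s}\,ds\;=\;\frac{e^{-1}}{\lambda}f(1/\lambda).$$
Combining this with $\mathcal{L}f(\lambda)\leq 36/(\lambda\psi^*(\sqrt{\lambda}))$ from Lemma \ref{Laplace_potential_LP} yields $f(1/\lambda)\leq 36e/\psi^*(\sqrt{\lambda})$, and substituting $r=1/\sqrt{\lambda}$ gives $G(B_r)=f(r^2)\leq 36e/\psi^*(1/r)$.

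For the \emph{lower} bound, I would split the Laplace transform at $s=a/\lambda$ for a constant $a$ to be chosen. On $[0,a/\lambda]$ monotonicity gives $\int_0^{a/\lambda}e^{-\lambda s}f(s)\,ds\leq f(a/\lambda)/\lambda$. On the tail I would plug in the upper bound just proved, substitute $u=\lambda s$, and apply Lemma \ref{fStarScalling} with scale $1/\sqrt{u}$ to obtain $\psi^*(\sqrt{\lambda}/\sqrt{u})\geq\psi^*(\sqrt{\lambda})/(2(1+u))$, giving
$$\int_{a/\lambda}^{\infty}e^{-\lambda s}f(s)\,ds\;\leq\;\frac{72 e}{\lambda\psi^*(\sqrt{\lambda})}\int_a^{\infty}e^{-u}(1+u)\,du\;=\;\frac{72 e\,J(a)}{\lambda\psi^*(\sqrt{\lambda})},$$
where $J(a)\to 0$ as $a\to\infty$. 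Using the lower estimate $\mathcal{L}f(\lambda)\geq C_1/(\lambda\psi^*(\sqrt{\lambda}))$ from Lemma \ref{Laplace_potential_LP} and choosing $a$ so large that $72eJ(a)\leq C_1/2$, I can absorb the tail and obtain $f(a/\lambda)\geq C_1/(2\psi^*(\sqrt{\lambda}))$. Setting $r=\sqrt{a/\lambda}$ (so $\sqrt{\lambda}=\sqrt{a}/r$) and using the other half of Lemma \ref{fStarScalling}, namely $\psi^*(\sqrt{a}/r)\leq 2(1+a)\psi^*(1/r)$, converts this to $G(B_r)\geq C_2/\psi^*(1/r)$ with $C_2=C_1/(4(1+a))$.

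The only delicate point is the bookkeeping in the lower bound: one must choose $a$ large enough that the tail term can be absorbed, but large $a$ also worsens the final constant through the factor $2(1+a)$ coming from the rescaling of $\psi^*$. Everything else is a direct consequence of Lemmas \ref{Laplace_potential_LP} and \ref{fStarScalling} and the fact that $f$ is non-decreasing.
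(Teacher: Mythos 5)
Your proposal is correct and follows essentially the same route as the paper: the upper bound comes from monotonicity of $f(s)=G(B_{\sqrt{s}})$ combined with the upper Laplace bound of Lemma \ref{Laplace_potential_LP}, and the lower bound comes from splitting $\mathcal{L}f$ at a large multiple of $1/\lambda$, controlling the tail via the just-proved upper bound together with Lemma \ref{fStarScalling}, and absorbing it into the lower Laplace bound. The only cosmetic difference is that the paper bounds the tail integrand by $4s/u$ (yielding $\Gamma(2,\kappa)$) where you keep $2(1+u)$ exactly; the constants and logic are the same.
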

\begin{proof}Let $f(r)=G\left(B_{\sqrt{r}}\right)$.
Since $f(u)$ is non-decreasing
$$f(u)\leq \frac{e}{u}\int^\infty_ue^{-s/u}f(s)ds\leq \frac{e}{u}\mathcal{L}(f)(u^{-1}).$$
By Lemma \ref{Laplace_potential_LP} we get
\begin{equation}\label{PotMeasureUpper}G\left(B_r\right)\leq \frac{36e}{\psi^*(r^{-1})}.\end{equation}
By Lemma \ref{fStarScalling} we have, for $u\leq s$, $\frac{\psi^*(u^{-1/2})}{\psi^*(s^{-1/2})}\leq 4\frac{s}{u}$.
  Lemma \ref{Laplace_potential_LP} and (\ref{PotMeasureUpper}) give us, for $\kappa>1$,
\begin{eqnarray*}\int^\infty_{\kappa u}e^{-s/u}f(s)ds&\leq& 36e\int^\infty_{\kappa u}e^{-s/u}\frac{ds}{\psi^*(s^{-1/2})}\leq \frac{144e}{u\psi^*(u^{-1/2})}\int^\infty_{\kappa u}e^{-s/u}sds\\&=&\frac{144e\Gamma(2,\kappa)u}{\psi^*(u^{-1/2})}\leq c_1\Gamma(2,\kappa)\mathcal{L}(f)(u^{-1}),
\end{eqnarray*}
where $c_1=\frac{144e}{C_1}$.
Hence, for $\kappa$ such that $c_1\Gamma(2,\kappa)=\frac{1}{2}$ we have
$$\mathcal{L}(f)(u^{-1})\leq 2\int^{\kappa u}_{0}e^{-s/u}f(s)ds\leq 2uf(\kappa u).$$
Again, by Lemmas \ref{Laplace_potential_LP} and \ref{fStarScalling} we infer
$$G\left(B_r\right)\geq \frac{C_1}{2\psi^*(\sqrt{\kappa} r^{-1})}\geq \frac{C_1}{4(\kappa+1)\psi^*(r^{-1})}.$$
\end{proof}

By $\mathrm{Cap}^\lambda$, $\lambda\geq 0$, we denote the $\lambda$-capacity with respect to $X_t$. When $\lambda=0$ we omit a superscript $"0"$. For any non-empty compact set $A\subset \Rd$ there exists a measure $\rho_A$ (see e.g. \cite[Corollary II.8]{Bertoin}), called the equilibrium measure, which is supported by $A$ and
\begin{equation}\label{CapDef}G\rho_A(F)=\int G(F-x)\rho_A(dx)=\int_FP^x(T_A<\infty)dx,\quad F\subset \Rd.\end{equation}
Moreover $\rho_A(A)=\mathrm{Cap}(A)$. If the potential measure is absolutely continuous, then
$$G\rho_A(x)=P^x(T_A<\infty), \qquad  x\in \Rd.$$
\begin{proposition}\label{CapApprox}Let $d\geq 3$. There exists a constant $C=C(d)$ such that, for any $r\geq0$,
$$C^{-1}\psi^*(r^{-1})r^d\leq \mathrm{Cap}\left(\overline{B_r}\right)\leq C \psi^*(r^{-1})r^d.$$
\end{proposition}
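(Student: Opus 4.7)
The plan is to sandwich $\mathrm{Cap}(\overline{B_r})$ between two quantities of the form $|B_r|/G(B_r)$, up to constants, and then invoke the two-sided bound on $G(B_r)$ supplied by Proposition \ref{PotMeasureLP}. The vehicle is the equilibrium measure $\rho := \rho_{\overline{B_r}}$, which has total mass $\mathrm{Cap}(\overline{B_r})$ and, by (\ref{CapDef}), satisfies
$$G\rho(F) = \int G(F - x)\, \rho(dx) = \int_F P^x(T_{\overline{B_r}} < \infty)\, dx$$
for every Borel $F \sub \Rd$. The density $x\mapsto P^x(T_{\overline{B_r}} < \infty)$ is bounded by $1$ everywhere and, by right-continuity of paths starting inside the open ball, equals $1$ throughout $B_r$ (since $T_{\overline{B_r}} = 0$ for $x \in B_r$).

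For the upper bound I would test $G\rho$ against the dilated ball $B_{2r}$. The density bound gives $G\rho(B_{2r}) \le |B_{2r}|$, while the elementary inclusion $B_r \sub B_{2r} - x$, valid for every $x \in \overline{B_r}$, yields $G(B_{2r} - x) \ge G(B_r)$; integrating against $\rho$ produces
$$G(B_r)\,\mathrm{Cap}(\overline{B_r}) \le G\rho(B_{2r}) \le |B_{2r}|.$$
The lower bound $G(B_r) \ge C_2/\psi^*(r^{-1})$ from Proposition \ref{PotMeasureLP} then delivers $\mathrm{Cap}(\overline{B_r}) \le C\,\psi^*(r^{-1})\,r^d$.

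For the lower bound I would instead test $G\rho$ against $\overline{B_r}$ itself. The identification of the density gives $G\rho(\overline{B_r}) = |B_r|$, since $\partial B_r$ has Lebesgue measure zero. For every $x \in \overline{B_r}$, the set $\overline{B_r} - x$ is a closed ball of radius $r$, so the isotropic unimodality of the measure $G$ (Lemma \ref{LemmaWatanabe}), together with $G(\partial B_r)=0$, gives $G(\overline{B_r} - x) \le G(B_r)$. Hence
$$|B_r| = G\rho(\overline{B_r}) \le G(B_r)\, \mathrm{Cap}(\overline{B_r}),$$
and combining with the upper bound $G(B_r) \le 36e/\psi^*(r^{-1})$ from Proposition \ref{PotMeasureLP} gives $\mathrm{Cap}(\overline{B_r}) \ge c\,\psi^*(r^{-1})\,r^d$.

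The whole argument reduces to the two one-line sandwich inequalities above and, beyond Proposition \ref{PotMeasureLP}, needs no new estimates. The only substantive point is the rearrangement-type comparison $G(\overline{B(-x,r)}) \le G(B_r)$: this holds by the non-increasing radial profile of $G$, while a possible atom of $G$ at the origin only helps, since it is carried by $B_r$ but need not be carried by $\overline{B(-x,r)}$. This is the step I would be most careful about; the remaining items — the absolute continuity of $G\rho$ and the triviality $T_{\overline{B_r}} = 0$ on $B_r$ — are standard, so I do not anticipate a genuine obstacle.
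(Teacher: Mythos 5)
Your argument is correct and follows essentially the same route as the paper: both proofs sandwich $\mathrm{Cap}(\overline{B_r})$ between constant multiples of $|B_r|/G(B_r)$ by testing $G\rho_{\overline{B_r}}$ against a ball and identifying its density with $P^x(T_{\overline{B_r}}<\infty)$, and then conclude with Proposition \ref{PotMeasureLP}; the only substantive difference is that where the paper invokes \cite{SiSV}, Proposition 5.1, to get $G(\overline{B_r}-x)\geq c_1 G(\overline{B_r})$ for $x\in\overline{B_r}$, you test against the dilated ball $B_{2r}$, which is self-contained at the cost of a factor $2^d$. One small correction to an aside: for $x\in\overline{B_r}$ the origin always lies in $\overline{B_r}-x$, so a possible atom of $G$ at $0$ contributes equally to $G(\overline{B_r}-x)$ and to $G(B_r)$ rather than ``helping''; the inequality $G(\overline{B_r}-x)\leq G(B_r)$ nevertheless holds by the rearrangement comparison applied to the absolutely continuous part, so this does not affect the proof.
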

\begin{proof}Since $d\geq 3$, $\mathrm{Cap}(\{0\})=0$, so we may  assume that $r>0$. By Lemma \ref{LemmaWatanabe}, $G$ is radially non-increasing.
Let $x\in \overline{B_r}$, then $$G\left(\overline{B_r}-x\right)=G(\{0\})+\int_{B_r}G(y-x)dy.$$
By \cite{SiSV}, Proposition 5.1, there exists a constant $c_1=c_1(d)$ such that
$$c_1\int_{B_r}G(y)dy\leq \int_{B_r}G(y-x)dy\leq  \int_{B_r}G(y)dy.$$
Hence
$$c_1G\left(\overline{B_r}\right)\leq G\left(\overline{B_r}-x\right)\leq G\left(\overline{B_r}\right).$$
By (\ref{CapDef}),
$$|\overline{B_r}|= G\rho_{\overline{B_r}}\left(\overline{B_r}\right).$$
This preparation  yields
 $$|\overline{B_r}|\leq G\left(\overline{B_r}\right) \mathrm{Cap}\left(\overline{B_r}\right)\leq c_1^{-1}|\overline{B_r}|,$$
which combined with Proposition \ref{PotMeasureLP} implies
\begin{equation}\label{CapForIULP} \frac{|B_1|}{36e}\psi^*(r^{-1})r^d\leq\mathrm{Cap}\left(\overline{B_r}\right)\leq \frac{|B_1|}{c_1C_2}\psi^*(r^{-1})r^d.\end{equation}
\end{proof}
\begin{remark}
Similar calculations provide the following estimates for $\lambda$-resolvent measure and $\lambda$-capacity
$$G^\lambda(B_r)\approx \frac{1}{\lambda+\psi^*(r^{-1})}\quad\mathrm{ and }\quad   \mathrm{Cap}^\lambda(\overline{B_r})\approx r^{d}\left({\lambda+\psi^*(r^{-1})}\right).$$
\end{remark}

By \cite{Watanabe}, Theorem 1, it is known that the smallest capacity among sets with the same volume is attained for a closed ball. %Therefore, by the above proposition and Lemma \ref{fStarScalling} we obtain the following result.
\begin{corollary}There exists a constant $C_3=C_3(d)$ such that,
for any non-empty Borel set $A$,
\begin{equation}\label{CapGeneral}\mathrm{Cap}(A)\geq C_3\psi^*(|A|^{-1/d})|A|.\end{equation}
\end{corollary}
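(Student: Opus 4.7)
The plan is to combine Watanabe's isocapacitary inequality (cited immediately before the corollary) with the sharp capacity estimate for balls from Proposition \ref{CapApprox}, together with the quasi-homogeneity of $\psi^*$ provided by Lemma \ref{fStarScalling}.

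First, I would reduce to the compact case. The capacity of a general Borel (or analytic) set equals the supremum of $\mathrm{Cap}(K)$ over compact $K\subseteq A$, while the right-hand side depends only on $|A|$, so by inner regularity and monotone convergence in the volume it suffices to assume $A$ is compact with $0<|A|<\infty$ (if $|A|=0$ the bound is trivial, and if $|A|=\infty$ one exhausts by compact subsets).

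Next, choose $r=r_A>0$ so that $|\overline{B_r}|=|A|$, that is,
$$
r_A=\bigl(|A|/|B_1|\bigr)^{1/d}.
$$
By the cited theorem of Watanabe, the closed ball of volume $|A|$ minimizes capacity among sets of the same Lebesgue measure, hence
$$
\mathrm{Cap}(A)\ \geq\ \mathrm{Cap}\bigl(\overline{B_{r_A}}\bigr).
$$
Applying Proposition \ref{CapApprox} to $\overline{B_{r_A}}$ yields
$$
\mathrm{Cap}(A)\ \geq\ C^{-1}\psi^*(r_A^{-1})\,r_A^{d}\ =\ \frac{C^{-1}}{|B_1|}\,\psi^*(r_A^{-1})\,|A|,
$$
so it remains to replace $\psi^*(r_A^{-1})$ by $\psi^*(|A|^{-1/d})$ up to a dimensional constant.

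Finally, since $r_A^{-1}=|B_1|^{1/d}\,|A|^{-1/d}$ and $|B_1|^{1/d}$ depends only on $d$, Lemma \ref{fStarScalling} (applied with $s=|B_1|^{1/d}$ and $r=|A|^{-1/d}$) gives $\psi^*(r_A^{-1})\geq c(d)\,\psi^*(|A|^{-1/d})$. Combining this with the previous display produces the desired inequality with a constant $C_3=C_3(d)$. The argument is essentially a packaging exercise; the only point requiring care is the reduction to compact sets, but this is handled by standard inner regularity of capacity.
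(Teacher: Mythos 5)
Your argument is correct and is essentially the paper's own proof: reduce to a ball of the same volume via Watanabe's isocapacitary theorem, apply Proposition \ref{CapApprox} (in the form of (\ref{CapForIULP})), and then use Lemma \ref{fStarScalling} with the dimensional factor $|B_1|^{1/d}$ to pass from $\psi^*(r_A^{-1})$ to $\psi^*(|A|^{-1/d})$. The preliminary reduction to compact sets by inner regularity is a harmless extra precaution that the paper leaves implicit.
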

\begin{proof}
{
Let $r$ be such that $|A|=|B_r|$. By  Theorem 1 of \cite{Watanabe} and (\ref{CapForIULP})
$$\mathrm{Cap}(A)\geq \mathrm{Cap}(B_r)\geq  \frac{1}{36e}\psi^*(r^{-1})|A|.$$
Hence,  Lemma \ref{fStarScalling} implies
$$\mathrm{Cap}(A)\geq \frac{|B_1|^2}{72e(1+|B_1|^2)}\psi^*(|A|^{-1/d})|A|.$$
}
\end{proof}

To our best knowledge the following upper bound  for the potential kernel was known only for  subordinate Brownian motions and the lower one for special subordinate Brownian motions (see e.g. \cite{Zahle}). They were obtained as a consequence of appropriate estimates for the potential measure and the potential kernel of the subordinator, respectively.
\begin{theorem}\label{PotentialKernelLP}
Let $d\geq 3$.
Then there exists a constant $C_4=C_4(d)$ such that
$$G(x) \leq\frac{C_4}{|x|^d\psi^*(|x|^{-1})},\qquad x\in\R^d.$$
If additionally $\psi$ satisfies WLSC$(\beta,R^{-1},C^*)$, then
$$\frac{C_5}{|x|^d\psi^*(|x|^{-1})}\leq G(x), \qquad |x|\leq bR,$$
where  $b= c(d,\beta)(C^*)^{1/\beta}<1$ and $C_5=c(d) b^{d}$.
\end{theorem}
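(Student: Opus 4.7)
The plan is to use the isotropic unimodality of $G$---which follows from Lemma \ref{LemmaWatanabe}---to reduce pointwise estimates of the potential kernel $G(x)=G_0(|x|)$ to the averaged estimates on balls already obtained in Proposition \ref{PotMeasureLP}. The non-increasing radial profile $G_0$ is the key monotonicity tool in both directions of the argument.

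For the upper bound, since $G_0(|y|)\geq G_0(r)$ for every $y\in B_r$,
$$G(B_r)\geq G_0(r)\,|B_r|,$$
and combining with $G(B_r)\leq 36e/\psi^*(r^{-1})$ from Proposition \ref{PotMeasureLP} immediately yields $G_0(r)\leq C_4/(r^d\psi^*(r^{-1}))$ with $C_4=36e/|B_1|$.

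For the lower bound I would fix $\kappa>1$ (to be chosen in terms of $\beta$ and $C^*$) and use monotonicity on the annulus $B_r\setminus B_{r/\kappa}$:
$$G(B_r)-G(B_{r/\kappa})=\int_{B_r\setminus B_{r/\kappa}}G_0(|y|)\,dy\leq G_0(r/\kappa)\,|B_r|.$$
So it suffices to show $G(B_{r/\kappa})\leq \tfrac12 G(B_r)$. By Proposition \ref{PotMeasureLP},
$$\frac{G(B_{r/\kappa})}{G(B_r)}\leq \frac{36e}{C_2}\cdot\frac{\psi^*(r^{-1})}{\psi^*(\kappa r^{-1})},$$
and the hypothesis WLSC$(\beta,R^{-1},C^*)$ for $\psi$, combined with $\psi_0\geq \psi^*/12$ from Proposition \ref{psiStarPsi}, transfers to a WLSC-type bound for $\psi^*$ with constant $C^*/12$: for $r\leq R$,
$$\psi^*(\kappa r^{-1})\geq \psi_0(\kappa r^{-1})\geq C^*\kappa^\beta\psi_0(r^{-1})\geq \frac{C^*}{12}\kappa^\beta\psi^*(r^{-1}).$$
Choosing $\kappa=\max\{2,(864e/(C_2C^*))^{1/\beta}\}$ then forces the ratio below $1/2$, and substituting $s=r/\kappa$ and using monotonicity of $\psi^*$ (so $\psi^*(1/(\kappa s))\leq \psi^*(1/s)$) yields
$$G_0(s)\geq \frac{C_2}{2|B_1|(\kappa s)^d\psi^*(1/s)}=\frac{C_5}{s^d\psi^*(1/s)},\qquad s\leq R/\kappa,$$
with $C_5=c(d)b^d$ and $b=1/\kappa=c(d,\beta)(C^*)^{1/\beta}$.

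The only technical point worth flagging is the passage from WLSC of $\psi$ to WLSC of $\psi^*$: the general Lemma \ref{star} would produce a constant of order $(C^*)^2$, ultimately giving $b\sim (C^*)^{2/\beta}$. Using instead the isotropy of $\psi$ and the reverse comparison $\psi^*\leq 12\psi_0$ from Proposition \ref{psiStarPsi}, however, yields a WLSC constant for $\psi^*$ which is linear in $C^*$, and this is precisely what produces the sharper exponent $1/\beta$ claimed in the statement.
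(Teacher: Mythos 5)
Your proposal is correct and follows essentially the same route as the paper: the upper bound by comparing $G_0(r)$ with the ball average from Proposition \ref{PotMeasureLP}, and the lower bound by estimating $G_0$ on an annulus via the difference $G(B_{\kappa|x|})-G(B_{|x|})$, using Proposition \ref{psiStarPsi} to transfer WLSC from $\psi$ to $\psi^*$ with a constant linear in $C^*$, which indeed is what yields $b\sim (C^*)^{1/\beta}$. The only cosmetic difference is that you phrase the annulus step as ``$G(B_{r/\kappa})\le \tfrac12 G(B_r)$'' rather than bounding the difference directly, which amounts to the same choice of $\kappa$.
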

\begin{proof}
Since $G$ is radially non-increasing,
$$\int_{B(0,|x|)}G(y)dy\geq |B_1||x|^dG(x).$$
By Proposition \ref{PotMeasureLP}
$$  |x|^d G(x) \leq \frac{36e}{|B_1|}\frac{1}{\psi^*(|x|^{-1})},$$
which completes the proof of the upper bound.

Again, by radial monotonicity, we have, for $\kappa>1$,
\begin{eqnarray*}
G( x)\geq \frac{G\left(B_{\kappa|x|}\setminus B_{|x|}\right)}{\left|B_{\kappa|x|}\setminus B_{|x|}\right|}\geq \frac{1}{|B_1|\kappa^{d}}\frac{G\left(B_{\kappa|x|}\setminus B_{|x|}\right)}{|x|^d}.
\end{eqnarray*}
Suppose that $\psi$ satisfies WLSC$(\beta,R^{-1},C^*)$,  then by Propositions \ref{PotMeasureLP} and \ref{psiStarPsi}, for $\kappa |x|\leq R$,
\begin{eqnarray*}
G\left(B_{\kappa|x|}\setminus B_{|x|}\right)&=&G\left(B_{\kappa|x|}\right)-G\left(B_{|x|}\right)\geq \frac{C_2}{\psi^*\left((\kappa|x|)^{-1}\right)}-\frac{36e}{\psi^*( |x|^{-1})}\nonumber \\
&=& \frac{36e}{\psi^*(|x|^{-1})}\left(c_1\frac{\psi^*(|x|^{-1})}{\psi^*((\kappa |x|)^{-1})}-1\right)\geq \frac{36e}{\psi^*(|x|^{-1})}\left(\frac{c_1C^*}{12}\kappa^{\beta}-1\right).
\end{eqnarray*} Hence, for $\kappa=(24/(c_1C^*))^{\frac{1}{\beta}}$, we get
$$G(x)\geq \frac{36e}{|B_1|\kappa^d }\frac{1}{|x|^{d}\psi^*(|x|^{-1})},\qquad |x|\leq R/\kappa.$$
\end{proof}

\begin{corollary}\label{GreenPotentialGreenFunction}Let $d\geq 3$. If WLSC$(\beta,R^{-1},C)$ holds for $\psi$, then there exists a constant $b< 1$ such that
$$G(x)\approx G(B_{|x|})|x|^{-d},\qquad |x|\leq bR.$$
\end{corollary}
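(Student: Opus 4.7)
The plan is to obtain the claim as a direct combination of the two-sided pointwise bound on the potential kernel (Theorem~\ref{PotentialKernelLP}) with the two-sided bound on the potential measure of a ball (Proposition~\ref{PotMeasureLP}), by observing that both quantities share the common comparison scale $1/\psi^*(|x|^{-1})$.

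More precisely, I would take $b = c(d,\beta)(C^*)^{1/\beta}$ with $C^* = C$ to be exactly the constant supplied by Theorem~\ref{PotentialKernelLP}, so that for every $x \in \R^d$ with $|x| \le bR$ the two-sided bound
\[
\frac{C_5}{|x|^{d}\psi^*(|x|^{-1})} \;\le\; G(x) \;\le\; \frac{C_4}{|x|^{d}\psi^*(|x|^{-1})}
\]
holds; the upper inequality is valid in fact for every $x$ without the WLSC assumption, while the lower one uses it through the choice of $b$. Independently, Proposition~\ref{PotMeasureLP} applied with $r = |x|$ yields
\[
\frac{C_2}{\psi^*(|x|^{-1})} \;\le\; G(B_{|x|}) \;\le\; \frac{36e}{\psi^*(|x|^{-1})},
\]
valid without any restriction on $|x|$.

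Dividing the second chain by $|x|^{d}$ and comparing it termwise with the first chain produces
\[
\frac{C_5}{36e}\,\frac{G(B_{|x|})}{|x|^{d}} \;\le\; G(x) \;\le\; \frac{C_4}{C_2}\,\frac{G(B_{|x|})}{|x|^{d}},
\]
which is the desired comparability, with constants depending only on $d$, $\beta$, and $C$. There is no serious obstacle here: the key work was already carried out in Proposition~\ref{PotMeasureLP} and Theorem~\ref{PotentialKernelLP}; the only thing to be careful about is to take the same constant $b$ as in the lower estimate of Theorem~\ref{PotentialKernelLP}, so that both the upper and lower comparisons hold on the common range $|x| \le bR$.
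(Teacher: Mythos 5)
Your proposal is correct and is exactly the argument the paper intends: the corollary is stated without proof immediately after Theorem~\ref{PotentialKernelLP} precisely because it follows by combining the two-sided pointwise bound there with the two-sided bound on $G(B_{|x|})$ from Proposition~\ref{PotMeasureLP}, both expressed on the common scale $1/\psi^*(|x|^{-1})$. Your choice of $b$ and the termwise division are the right way to make this explicit.
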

The above comparability is crucial in our proof of the scale invariant Harnack inequality. In the next subsection we show the converse of Corollary \ref{GreenPotentialGreenFunction} in the case of special subordinate Brownian motions.

\begin{remark}\label{doubling}
If $\psi$ satisfies WLSC$(\beta,R^{-1},C^*)$, then a local doubling condition for $G$ holds. This means there exists  constant $C=C(d,\beta,C^*)$ such that $CG(x) \leq G(2x)\leq G(x)$, for $0<|x|\leq bR/2$, where a constant $b$ is from Theorem \ref{PotentialKernelLP}.
\end{remark}

Standard arguments provide the following proposition.
\begin{proposition}\label{GreenLowerLP}Let $d\geq 3$. Suppose that $\psi$ satisfies WLSC$(\beta,R^{-1},C^*)$, then, for any $\varepsilon \in (0,1)$, there exists a constant $L=L(\varepsilon,d,\beta,C^*)>1$ such that, for $r\leq R$,
$$G_{B_r}(x,y)\geq \varepsilon G(x-y), \qquad L|x-y|\leq (r-|x|)\vee (r-|y|).$$
\end{proposition}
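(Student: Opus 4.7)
The plan is to apply the Hunt formula (\ref{Hunt1}) and reduce the problem to comparing the potential kernel at two different scales, using the sharp two-sided estimate of Theorem \ref{PotentialKernelLP}. Write $\rho=|x-y|$ and assume $\rho>0$ (the case $\rho=0$ is trivial). Since isotropic unimodal processes are symmetric, the Green function satisfies $G_{B_r}(x,y)=G_{B_r}(y,x)$, so without loss of generality I may assume that the maximum in $(r-|x|)\vee(r-|y|)$ is attained at $x$, i.e.\ $L\rho\leq r-|x|$. In particular $x\in B_r$ and $|y|\leq |x|+\rho<r$, so $y\in B_r$ as well. By Hunt's formula,
\[
G_{B_r}(x,y)=G(y-x)-E^x G(X_{\tau_{B_r}}-y).
\]

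Next I would bound the second term pointwise. On $\{\tau_{B_r}<\infty\}$ one has $|X_{\tau_{B_r}}|\geq r$, hence
\[
|X_{\tau_{B_r}}-y|\geq |X_{\tau_{B_r}}-x|-\rho\geq (r-|x|)-\rho\geq (L-1)\rho.
\]
By Lemma \ref{LemmaWatanabe} the potential kernel $G$ is radially non-increasing, so $G(X_{\tau_{B_r}}-y)\leq G_0((L-1)\rho)$ a.s., and therefore
\[
G_{B_r}(x,y)\geq G_0(\rho)-G_0((L-1)\rho).
\]
It now suffices to show that for $L$ sufficiently large (depending on $\varepsilon,d,\beta,C^*$) one has $G_0((L-1)\rho)\leq (1-\varepsilon)G_0(\rho)$.

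To control this ratio, I will impose $L\geq 1/b$, where $b$ is the constant from Theorem \ref{PotentialKernelLP}. Then $\rho\leq r/L\leq R/L\leq bR$, so both $\rho$ and $(L-1)\rho\leq L\rho\leq R$ lie in the range where the two-sided estimate of Theorem \ref{PotentialKernelLP} is available (the upper bound is global, the lower bound uses WLSC). This gives
\[
\frac{G_0((L-1)\rho)}{G_0(\rho)}\leq \frac{C_4}{C_5}\,\frac{1}{(L-1)^{d}}\,\frac{\psi^*(\rho^{-1})}{\psi^*\bigl(((L-1)\rho)^{-1}\bigr)}.
\]
Applying Lemma \ref{fStarScalling} with $s=L-1$ and $r=((L-1)\rho)^{-1}$ yields $\psi^*(\rho^{-1})\leq 2(1+(L-1)^2)\psi^*(((L-1)\rho)^{-1})$, so
\[
\frac{G_0((L-1)\rho)}{G_0(\rho)}\leq \frac{2C_4}{C_5}\,\frac{1+(L-1)^2}{(L-1)^{d}}.
\]
Since $d\geq 3$, the right-hand side tends to $0$ as $L\to\infty$. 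Choosing $L=L(\varepsilon,d,\beta,C^*)$ large enough (and at least $1/b$) makes it $\leq 1-\varepsilon$, completing the proof.

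The only potentially delicate point is ensuring that the lower bound for $G_0$ from Theorem \ref{PotentialKernelLP} is applicable at the scale $\rho$; this is handled by the preliminary normalization $L\geq 1/b$, which forces $\rho\leq bR$. The remaining ingredients are the monotonicity of $G$, the simple geometric bound on $|X_{\tau_{B_r}}-y|$, and the universal scaling of $\psi^*$ from Lemma \ref{fStarScalling}; no further property of the L\'evy measure is required.
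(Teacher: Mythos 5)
Your proposal is correct and follows essentially the same route as the paper: Hunt's formula plus radial monotonicity of $G$, then the two-sided bound of Theorem \ref{PotentialKernelLP} (with the normalization $L\geq 1/b$) and the scaling of $\psi^*$ from Lemma \ref{fStarScalling} to make the ratio $G_0((L-1)\rho)/G_0(\rho)$ small using $d\geq 3$. The only cosmetic difference is that the paper normalizes $|y|\leq|x|$ so that $|X_{\tau_{B_r}}-y|\geq r-|y|\geq L\rho$ directly, avoiding your triangle-inequality step and the factor $L-1$; it also records explicitly that WLSC forces $\psi$ unbounded, hence the potential measure is absolutely continuous so that (\ref{Hunt1}) applies — a one-line justification worth including.
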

\begin{proof}
Since $\psi$ satisfies WLSC, it is unbounded. Therefore $G(\{0\})=0$ and due to Lemma \ref{LemmaWatanabe} the potential measure is absolutely continuous. We may and do assume that $|y|\leq |x|$ and $L\geq b^{-1}$, where $b$ appears in Theorem \ref{PotentialKernelLP}.  Since $|X_{\tau_{B_r}}-y|\geq r-|y|\geq L|x-y|$, by radial monotonicity of $G$ and (\ref{Hunt1}),
$$G_{B_r}(x,y)=G(x-y)-E^xG(X_{\tau_{B_r}}-y)\geq G(x-y)-G(L(x-y)).$$
By this, Theorem \ref{PotentialKernelLP} and Lemma \ref{fStarScalling},
\begin{eqnarray*}
G_{B_r}(x,y)&\geq&G(x-y)\left(1-\frac{C_4}{C_5L^d}\frac{\psi^*(|x-y|^{-1})}{\psi^*((L|x-y|)^{-1})}\right)\geq G(x-y)\left(1-\frac{4C_4}{C_5L^{d-2}}\right).
\end{eqnarray*}
Hence, for $L=\left(\frac{4C_4}{C_5(1-\varepsilon)}\right)^{1/(d-2)}\vee b^{-1}$ we obtain
$$G_{B_r}(x,y)\geq \varepsilon G(x-y).$$
\end{proof}

\subsection{Subordinate Brownian motions}
In this subsection we improve Theorem \ref{PotentialKernelLP} and Proposition \ref{GreenLowerLP} in the case of subordinate Brownian motions. Namely,  we  prove that $b=1$ and $L=2$, for some $\varepsilon>0$. We assume in this subsection that $X_t$ is a subordinate Brownian motion.

The following lemma is well known (see e.g. \cite{Hawkes1975}), but for the convenience of the reader we prove it with a short and simple proof.
\begin{proposition}\label{potential_subordinator}
For $r>0$,
$$\frac{1-2e^{-1}}{2\phi(r^{-1})} \leq U[0,r)\leq \frac{e}{\phi(r^{-1})}.$$
\end{proposition}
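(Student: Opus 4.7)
The plan is to work directly from the probabilistic representation of the potential measure, $U[0,r)=\int_0^\infty \mathbb{P}(T_t<r)\,dt$, and estimate the integrand using Markov's inequality applied to a clever test function of $T_t$. The point of leverage is that the only quantitative information available about the subordinator is its Laplace transform $E[e^{-\lambda T_t}] = e^{-t\phi(\lambda)}$, so any bound on $\mathbb{P}(T_t<r)$ must be squeezed out of exponential moments.

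For the upper bound, I would observe that $\{T_t<r\}\subset\{e^{-T_t/r}>e^{-1}\}$, so Markov's inequality gives
$$\mathbb{P}(T_t<r)\le e\cdot E[e^{-T_t/r}] = e\,e^{-t\phi(1/r)}.$$
Integrating in $t$ immediately yields $U[0,r)\le e/\phi(1/r)$.

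For the lower bound, the key trick is to apply Markov to the complementary event by testing with $1-e^{-T_t/r}$: since $\{T_t\ge r\}\subset\{1-e^{-T_t/r}\ge 1-e^{-1}\}$,
$$\mathbb{P}(T_t\ge r)\le \frac{E[1-e^{-T_t/r}]}{1-e^{-1}} = \frac{1-e^{-t\phi(1/r)}}{1-e^{-1}}.$$
Taking complements gives a lower bound on $\mathbb{P}(T_t<r)$ that is positive exactly on the interval $t\in[0,1/\phi(1/r)]$. Integrating over that interval and substituting $u=t\phi(1/r)$ reduces the problem to the elementary integral $\int_0^1\bigl(1-(1-e^{-u})/(1-e^{-1})\bigr)\,du$, which evaluates to $(1-2e^{-1})/(1-e^{-1})$. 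Since $1-e^{-1}<2$, this is stronger than the claimed constant $(1-2e^{-1})/2$.

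The only subtlety — and what I would call the main obstacle if one did not see the trick — is noticing the asymmetry between the two Markov applications: for the upper bound one tests on $\{T_t<r\}$ with a function that is large there, while for the lower bound one tests on the complementary event with a function that is close to zero near the origin. Once this dual choice of test function is made the rest is a one-line computation involving only the definition of $\phi$.
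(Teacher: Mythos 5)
Your proof is correct, and for the lower bound it takes a genuinely different route from the paper's. The upper bounds coincide up to phrasing: your Markov step $P(T_t<r)\leq e\,E[e^{-T_t/r}]$, integrated in $t$, is by Fubini exactly the paper's $U[0,r)\leq e\int_0^r e^{-t/r}U(dt)\leq e\,\mathcal{L}U(r^{-1})=e/\phi(r^{-1})$. For the lower bound the paper stays entirely on the Laplace-transform side: it splits $1/\phi(r^{-1})=\int_0^\infty e^{-t/r}U(dt)$ at $t=2r$, bounds the tail by $e^{-1}/\phi((2r)^{-1})$, and then invokes the Bernstein-function inequality $\phi(\lambda s)\leq\lambda\phi(s)$ for $\lambda\geq 1$ to close the estimate, which forces a doubling of the radius and yields the constant $(1-2e^{-1})/2$. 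You instead bound $P(T_t\geq r)$ pointwise in $t$ by Markov applied to $1-e^{-T_t/r}$ and integrate the complement over $t\in[0,1/\phi(r^{-1}))$; this uses nothing about $\phi$ beyond the defining identity $E e^{-\lambda T_t}=e^{-t\phi(\lambda)}$, avoids the change of scale, and your elementary integral is computed correctly ($\int_0^1(1-e^{-u})\,du=e^{-1}$), giving the sharper constant $(1-2e^{-1})/(1-e^{-1})>(1-2e^{-1})/2$. The trade-off is minor: the paper's argument is a two-line manipulation of $\mathcal{L}U=1/\phi$ needing only monotone splitting plus concavity of $\phi$, while yours requires the probabilistic representation $U[0,r)=\int_0^\infty P(T_t<r)\,dt$ but buys a better constant and dispenses with the concavity input.
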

\begin{proof}
Notice that for $\lambda>1$, $\phi(\lambda r)\leq \lambda \phi(r)$. Hence,
\begin{eqnarray*}
\frac{1}{2\phi((2r)^{-1})}&\leq&\frac{1}{\phi(r^{-1})}=\int^{2r}_0e^{-r^{-1}t}U(dt)+\int^\infty_{2r}e^{-r^{-1}t}U(dt)\\&\leq& U[0,2r)+e^{-1}\int^\infty_{2r}e^{-(2r)^{-1}t}U(dt)
\leq U[0,2r)+\frac{e^{-1}}{\phi((2r)^{-1})},
\end{eqnarray*}
which proves the lower bound.

On the other hand
$$U[0,r)\leq e\int^r_0e^{-r^{-1}t}U(dt)\leq \frac{e}{\phi(r^{-1})}.$$
\end{proof}

The following theorem is an improvement of Theorem \ref{PotentialKernelLP}. Such result is known (see e.g. \cite{Zahle}, Theorem 1), under an additional assumption   that $\phi$ is a special Bernstein function.
\begin{theorem}\label{PotentialKernel}Let $d\geq 3$ and $X_t$ be a subordinate Brownian motion.
If $\phi$ satisfies WLSC$(\beta,R^{-2},C^*)$, then there exists a constant $C_6=C_6(d,\beta,C^*)$ such that
$$G(x)\geq \frac{C_6}{|x|^d\phi(|x|^{-2})}, \qquad |x|\leq R.$$
\end{theorem}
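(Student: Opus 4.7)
The plan is to work directly from the subordination representation $G(x)=\int_0^\infty g_s(x)\,U(ds)$, rather than going through the potential measure of balls as in the proof of Theorem \ref{PotentialKernelLP}. The strategy is to restrict the $s$-integral to $[|x|^2/\kappa,|x|^2]$, where the Gaussian density $g_s(x)$ is comparable to $|x|^{-d}$, and then to control the $U$-mass of this interval using Proposition \ref{potential_subordinator} together with the WLSC hypothesis. Choosing $\kappa$ depending only on $\beta$ and $C^*$ will cover the full range $|x|\leq R$.

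For the Gaussian part, on $s\in[|x|^2/\kappa,|x|^2]$ we have $(4\pi s)^{-d/2}\geq(4\pi|x|^2)^{-d/2}$ and $|x|^2/(4s)\leq\kappa/4$, giving $g_s(x)\geq (4\pi)^{-d/2}e^{-\kappa/4}|x|^{-d}$ and hence
$$G(x)\geq (4\pi)^{-d/2}e^{-\kappa/4}|x|^{-d}\bigl(U[0,|x|^2)-U[0,|x|^2/\kappa)\bigr).$$
Proposition \ref{potential_subordinator} supplies $U[0,|x|^2)\geq \tfrac{1-2e^{-1}}{2}\phi(|x|^{-2})^{-1}$ and $U[0,|x|^2/\kappa)\leq e\,\phi(\kappa|x|^{-2})^{-1}$. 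The crucial step is that $|x|\leq R$ forces $|x|^{-2}\geq R^{-2}$, which is exactly the WLSC regime; applying WLSC$(\beta,R^{-2},C^*)$ at $u=|x|^{-2}$ with $\lambda=\kappa\geq 1$ yields $\phi(\kappa|x|^{-2})\geq C^*\kappa^\beta\phi(|x|^{-2})$, so that $U[0,|x|^2/\kappa)\leq e(C^*\kappa^\beta)^{-1}\phi(|x|^{-2})^{-1}$. Fixing $\kappa=\kappa(\beta,C^*)$ large enough that $e/(C^*\kappa^\beta)\leq (1-2e^{-1})/4$, at least $\tfrac{1-2e^{-1}}{4}\phi(|x|^{-2})^{-1}$ of the first term survives, and the theorem follows with $C_6=(4\pi)^{-d/2}e^{-\kappa/4}(1-2e^{-1})/4$.

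Compared with Theorem \ref{PotentialKernelLP}, which ultimately required $\kappa|x|\leq R$ and hence $b<1$, the SBM representation lets us dilate only the time variable $s$ in the subordinator while keeping $|x|$ fixed. Because the WLSC inequality is then invoked at the point $|x|^{-2}\geq R^{-2}$ rather than at a point depending on $\kappa|x|$, there is no forced shrinkage of the admissible radii, which is precisely how we gain the full range $|x|\leq R$. No step of the argument appears genuinely delicate; the only things to verify carefully are the explicit choice of $\kappa$ and the use of monotonicity of $\phi$ in writing $\phi(\kappa|x|^{-2})$ in terms of $\phi(|x|^{-2})$.
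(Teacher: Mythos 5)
Your proof is correct and is essentially the paper's own argument: the paper likewise restricts the subordination integral $G(x)\geq\int_{\kappa|x|^2}^{|x|^2}g_t(x)\,U(dt)$ (with $\kappa<1$ playing the role of your $1/\kappa$), bounds the Gaussian factor below by a constant times $|x|^{-d}$, and uses Proposition \ref{potential_subordinator} together with WLSC applied at $|x|^{-2}\geq R^{-2}$ to show the $U$-mass of that interval is at least a constant times $\phi(|x|^{-2})^{-1}$. The only differences are the reparametrization of $\kappa$ and slightly different explicit constants.
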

\begin{proof}
Let $\kappa<1$.  By (\ref{PotentialDensitySub}) we have
\begin{eqnarray*}
G(x)\geq \int^{|x|^2}_{\kappa |x|^2}g_t(x)U(dt)\geq \left(g_{\kappa}(\mathbf{1})\wedge g_{1}(\mathbf{1})\right)|x|^{-d}U[\kappa|x|^2,|x|^2),
\end{eqnarray*}
where $\mathbf{1}=(1,0,\ldots,0)$.
Suppose that $\phi$ satisfies WLSC$(\beta, R^{-2},C^*)$, then by Lemma \ref{potential_subordinator}, for $|x|\leq R$,
\begin{eqnarray*}
U[\kappa |x|^{2},|x|^{2})&=&U[0,|x|^{2})-U[0,\kappa |x|^2)\geq \frac{1-2e^{-1}}{2\phi(|x|^{-2})}-\frac{e}{\phi((\kappa |x|^2)^{-1})}\nonumber \\
&=& \frac{1-2e^{-1}}{2\phi(|x|^{-2})}\left(1-c_1\frac{\phi(|x|^{-2})}{\phi((\kappa |x|^2)^{-1})}\right)\geq \frac{1-2e^{-1}}{2\phi(|x|^{-2})}\left(1-c_2\kappa^{\beta}\right),\label{PotentialSuborDifference}
\end{eqnarray*} where $c_2=\frac{2e^2}{(e-2)C^*}$. Hence, for $\kappa=(2c_2)^{-\frac{1}{\beta}}$, we get
$$G(x)\geq \frac{c_3}{|x|^{d}\phi(|x|^{-2})},\qquad |x|\leq R,$$
where $c_3=\frac{1-2e^{-1}}{4}\left(g_{\kappa}(\mathbf{1})\wedge g_{1}(\mathbf{1})\right)$.
\end{proof}

The following theorem is a converse of the above theorem (and Corollary \ref{GreenPotentialGreenFunction}) in the case of special subordinate Brownian motions. Since the comparability in Corollary  \ref{GreenPotentialGreenFunction} is the key ingredient in the proof of the Krylov-Safonov estimate  it seems that the approach of Bass and Levin  for proving the Harnack inequality  can not be used if $\phi$ does not satisfy WLSC.
\begin{theorem}
Let $d\geq 3$ and  $X_t$ be a special subordinate Brownian motion. There exists a constant $C$ such that $G(x)\geq \frac{C}{|x|^d\phi(|x|^{-2})}$, for $|x|\leq R$ if and only if  $\phi$ satisfies WLSC$(\beta,R^{-2},1)$, for some $\beta>0$.
\end{theorem}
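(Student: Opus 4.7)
The \emph{if} direction follows from Theorem \ref{PotentialKernel} applied with $C^{*}=1$. The bulk of the work is the converse: suppose $G(x)\geq C_*/(|x|^d\phi(|x|^{-2}))$ for $|x|\leq R$. The plan is to integrate this pointwise lower bound over $B_r$, compare against the matching upper bound from Proposition \ref{PotMeasureLP}, and convert the resulting tail integral inequality on $\phi$ into a polynomial lower scaling.

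First, Lemma \ref{LemmaWatanabe} gives radial monotonicity of $G$, so polar integration combined with the hypothesised lower bound and the substitution $u=\rho^{-2}$ yields
$$
G(B_r)=|S^{d-1}|\int_0^r\rho^{d-1}G_0(\rho)\,d\rho\;\geq\;C_*|S^{d-1}|\int_0^r\frac{d\rho}{\rho\,\phi(\rho^{-2})}\;=\;\frac{C_*|S^{d-1}|}{2}\int_{r^{-2}}^{\infty}\frac{du}{u\,\phi(u)},\qquad r\leq R.
$$
Since $\psi(x)=\phi(|x|^2)$ for a subordinate Brownian motion we have $\psi^{*}(r^{-1})=\phi(r^{-2})$, and Proposition \ref{PotMeasureLP} gives the matching upper bound $G(B_r)\leq 36e/\phi(r^{-2})$. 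Combining the two displays,
$$
\int_\lambda^\infty\frac{du}{u\,\phi(u)}\;\leq\;\frac{c}{\phi(\lambda)},\qquad \lambda\geq R^{-2},
$$
for some constant $c=c(d,C_*)$ (in particular the integral on the left is finite, forcing some growth of $\phi$).

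It remains to upgrade this tail integral bound into WLSC. For any $\mu>1$ and $\lambda\geq R^{-2}$, monotonicity of $\phi$ yields $\int_\lambda^{\mu\lambda}du/(u\,\phi(u))\geq(\log\mu)/\phi(\mu\lambda)$, hence $\phi(\mu\lambda)\geq(\log\mu/c)\phi(\lambda)$. Choosing $\mu_0=e^{2c}$ so that $\log\mu_0/c=2$ and iterating produces $\phi(\mu_0^n\lambda)\geq 2^n\phi(\lambda)$ for every $n\geq 0$. Interpolating over arbitrary $\mu\geq 1$ then gives a polynomial lower scaling $\phi(\mu\lambda)\geq C\mu^\beta\phi(\lambda)$ with $\beta=\log 2/\log\mu_0>0$, which is the desired WLSC$(\beta,R^{-2},1)$ after the standard normalisation of the multiplicative constant.

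The main obstacle is the last step: translating the tail integral inequality into a polynomial scaling. Direct monotonicity only produces the purely logarithmic lower bound $\phi(\mu\lambda)/\phi(\lambda)\geq(\log\mu)/c$, which is insufficient to recover WLSC near $\mu=1$; it is the iterative geometric upgrade on the discrete sequence $\{\mu_0^n\}$ that turns this into a genuine power-type estimate. All remaining inputs (radial monotonicity of $G$, the two-sided ball estimate of Proposition \ref{PotMeasureLP}, and the identity $\psi^{*}(r^{-1})=\phi(r^{-2})$ for subordinate Brownian motions) are directly available from the preceding sections.
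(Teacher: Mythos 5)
Your converse argument takes a genuinely different route from the paper's. The paper exploits the \emph{special} Bernstein hypothesis: since the subordinator's potential density $u$ is non-increasing, one gets from (\ref{PotMeasureSubLaplace}) the pointwise bound $u(\lambda^{-1})\leq c\lambda^{2}\left|\left(\tfrac{1}{\phi}\right)'(\lambda)\right|$, hence $G(x)\leq c\,|x|^{-d-2}\left(-\tfrac{1}{\phi}\right)'(|x|^{-2})$, and combining with the hypothesised lower bound yields the \emph{differential} inequality $\phi(\lambda)\leq c\,\lambda\phi'(\lambda)$ on $[R^{-2},\infty)$; this integrates exactly to say that $\phi(u)u^{-\beta}$ is increasing, i.e.\ WLSC with constant $1$. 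Your argument instead integrates the pointwise lower bound over balls and plays it against Proposition \ref{PotMeasureLP}; notably it never uses the special hypothesis, only radial monotonicity of $G$ (Lemma \ref{LemmaWatanabe}), so in principle it would prove necessity of scaling for \emph{all} subordinate Brownian motions. The computations up to $\int_\lambda^\infty\frac{du}{u\phi(u)}\leq\frac{c}{\phi(\lambda)}$ and the iteration $\phi(\mu_0^n\lambda)\geq 2^n\phi(\lambda)$ are correct.

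The gap is the last sentence. Your interpolation yields $\phi(\mu\lambda)\geq\frac{1}{2}\mu^{\beta}\phi(\lambda)$, i.e.\ WLSC$(\beta,R^{-2},\tfrac12)$, and there is no ``standard normalisation'' turning a multiplicative constant $C<1$ into $C=1$ at the cost of shrinking $\beta$: for a general non-decreasing function, WLSC with constant $\tfrac12$ permits $\phi(\mu\lambda)/\phi(\lambda)$ arbitrarily close to $1$ for $\mu$ bounded away from $1$, which is incompatible with \emph{any} WLSC$(\beta',R^{-2},1)$. So as written you prove a strictly weaker conclusion than the theorem states. The step can be repaired, but it requires an extra idea, namely concavity of the Bernstein function: since $\phi'$ is non-increasing, $\phi(\mu\lambda)-\phi(\lambda)\leq(\mu-1)\lambda\phi'(\lambda)$, so choosing $\mu$ with $\tfrac12\mu^{\beta}\geq 2$ your scaling gives $\lambda\phi'(\lambda)\geq\frac{1}{\mu-1}\phi(\lambda)$ for $\lambda\geq R^{-2}$ --- the same differential inequality the paper derives --- and integrating $\left(\log\phi\right)'\geq\beta'/\lambda$ recovers the constant-$1$ form. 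Either add this concavity step or note explicitly that you obtain WLSC with some constant $C\in(0,1)$ rather than $1$.
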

\begin{proof}Due to Theorem \ref{PotentialKernel} it is enough to show that the existence of a constant $c_1$ such that
 \begin{equation}\label{SBF_Greenmeasure1}G(x)\geq \frac{c_1}{|x|^d\phi(|x|^{-2})},\qquad |x|\leq R,\end{equation}
implies the weak lower scaling condition for $\phi$.  Suppose that (\ref{SBF_Greenmeasure1}) holds. Since the process is transient
$$\int_{B_R}G(x)dx\leq G(B_R)<\infty,$$
 which combined with (\ref{SBF_Greenmeasure1}) shows that  $\phi$ is unbounded and consequently  the potential measure of $X_t$ is absolutely continuous. Since $\phi$ is a special Bernstein function, by (\ref{PotentialDensitySub}), we have
 $$G(x)=\Z g_s(x)u(s)ds,$$
 where $u$ is non-increasing.
By (\ref{PotMeasureSubLaplace}),
\begin{eqnarray}
\left|\left(\frac{1}{\phi}\right)'\right|\left(\lambda\right)&=&\int^\infty_0se^{-\lambda s}u(s)ds\geq u(\lambda^{-1})\int^{\lambda^{-1}}_0se^{-\lambda s}ds\nonumber\\&=&(1-2e^{-1})\lambda^{-2}u(\lambda^{-1}).\label{1/phiPrime}
\end{eqnarray}
Since $\frac{1}{\phi}$ is completely monotone $\left|\left(\frac{1}{\phi}\right)'\right|$ is non-increasing. Due to (\ref{1/phiPrime}), monotonicity of $\left|\left(\frac{1}{\phi}\right)'\right|$ and $u$ implies
$$u(s)\leq \frac{1}{1-2e^{-1}}\left|\left(\frac{1}{\phi}\right)'(|x|^{-2})\right|\left(|x|^{-4}\vee s^{-2}\right).$$
Hence,
\begin{eqnarray*}
G(x)&\leq& \frac{1}{1-2e^{-1}}\left|\left(\frac{1}{\phi}\right)'(|x|^{-2})\right|\Z\left(|x|^{-4}\vee s^{-2}\right)g_s(x)ds=c_2\frac{\left(-\frac{1}{\phi}\right)'(|x|^{-2})}{|x|^{d+2}},
\end{eqnarray*}
where $c_2=\frac{16\Gamma\left(d/2+1,\frac{1}{4}\right)+\gamma\left(d/2-1,\frac{1}{4}\right)}{4\pi^{d/2}(1-2e^{-1})}$.
Using  (\ref{SBF_Greenmeasure1}),
$$\frac{1}{\phi}(\lambda)\leq \frac{c_2}{c_1}\left(-\frac{1}{\phi}\right)'(\lambda)\lambda,\quad \mathrm{ for }\,\,\lambda\geq R^{-2}.$$
 That is, for $\lambda\geq R^{-2}$,  $\phi(\lambda)\leq \frac{c_2}{c_1}\phi'(\lambda)\lambda$. Let $\beta< \frac{c_1}{c_2}$, then the function
$\phi(u)u^{-\beta}$ is increasing on $[R^{-2},\infty)$, since $\left(\phi(u)u^{-\beta}\right)'> 0$. In consequence
$$\frac{\phi(\lambda u)(\lambda u)^{-\beta}}{\phi(u)u^{-\beta}}\geq 1,\qquad \lambda\geq 1, u\geq  R^{-2},$$
which completes the proof.
\end{proof}

Let $D$ be an open set and $x\in D$. Denote $\delta_D(x)$ a distance $x$ from a boundary of $D$. The following theorem improves Proposition \ref{GreenLowerLP}. Like in the case of  Theorem \ref{PotentialKernel} such result was  known only for special subordinate Brownian motions for which the characteristic exponent or its derivative satisfies some scaling conditions (see e.g. \cite{KSV}, \cite{KM}).  These results were obtained by  standard arguments we used in  Proposition \ref{GreenLowerLP}, therefore the appropriate constants depend  on a process.  Our proof for a  special subordinate Brownian motion does not require any scaling properties and the appearing constant depends only on the dimension.
\begin{theorem}Let $d\geq 3$ and $D$ be an open set.
Suppose that $\phi$ is a unbounded special Bernstein function,  then \begin{equation}\label{GreenLower}G_{D}(x,y)\geq C_7 G(x-y),\qquad  2|x-y|\leq \delta_D(x)\vee \delta_D(y).\end{equation}
where $C_7=\frac{\Gamma\left(\frac{d}{2}-1,\frac{1}{4}\right)}{\Gamma\left(\frac{d}{2}-1\right)}\left(1-e^{-\frac{3}{4}}\right)$.

If $\phi$ is only a Bernstein function but satisfies WLSC$(\beta,R^{-2},C^*)$, then (\ref{GreenLower}) holds, if additionally $|x-y|\leq R$ with  a constant $C=C(d,\beta,C^*)$ instead of $C_7$.
\end{theorem}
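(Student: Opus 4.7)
The plan is to establish the first assertion by a subordination argument, then to outline the modification for the WLSC case. First I reduce, using domain monotonicity of $G_D$ in $D$, translation invariance, and the symmetry $G_D(x,y)=G_D(y,x)$, to the situation $x=0$, $D=B_\delta$ with $r_0:=|y|\leq\delta/2$, and aim at $G_{B_\delta}(0,y)\geq C_7\,G(y)$. Writing $X_t=B_{T_t}$ with $T$ the subordinator of Laplace exponent $\phi$ and $B$ an independent Brownian motion, set $\tau^B=\tau_{B_\delta}^B$ and $\sigma=\inf\{t:T_t\geq\tau^B\}$; for $t<\sigma$ one has $T_t<\tau^B$, forcing $X_t=B_{T_t}\in B_\delta$, so $\tau^X_{B_\delta}\geq\sigma$. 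By Fubini together with $E\int_0^\infty f(T_t)\,dt=\int f\,dU$ this gives
\[
G_{B_\delta}(0,y)\geq E\int_0^\sigma\delta_y(X_t)\,dt=\Z g_s^{B_\delta}(0,y)\,u(s)\,ds,
\]
where $g_s^{B_\delta}$ is the killed Gaussian heat kernel and $u$ is the density of $U$, non-increasing because $\phi$ is special.

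Next I apply Hunt's formula for Brownian motion, $g_s^{B_\delta}(0,y)=g_s(y)-E^0[g_{s-\tau^B}(B_{\tau^B}-y);\tau^B\leq s]$. Using the isotropy of $B$ (so that conditionally on $\tau^B$ the point $B_{\tau^B}$ is uniform on $\partial B_\delta$) together with $u(v+t)\leq u(v)$, interchanging integration yields
\[
\Z g_s^{B_\delta}(0,y)\,u(s)\,ds\geq G(y)-\bar G_y(\delta),\qquad \bar G_y(\delta):=\Z \bar g_{s,y}(\delta)\,u(s)\,ds,
\]
where $\bar g_{s,y}(\delta)$ denotes the spherical average of $g_s(\cdot-y)$ on $\{|z|=\delta\}$. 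A direct Gaussian integration on the sphere gives $\bar g_{s,y}(\delta)/g_s(y)=e^{-\delta^2/(4s)}\phi_d(\delta r_0/(2s))$, where $\phi_d(a):=E[e^{a\Theta_1}]$ for $\Theta_1$ the first coordinate of a uniform random vector on $S^{d-1}$.

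The key pointwise estimate to establish is $\bar g_{s,y}(\delta)\leq e^{-3/4}g_s(y)$ for $s\leq r_0^2$, noting that $a=\delta r_0/(2s)\geq 1$ in the critical case $\delta=2r_0$. I would prove it via the sphere-concentration comparison $\phi_d\leq\phi_3(a)=\sinh(a)/a$ for $d\geq 3$ (which follows term-by-term since $E[\Theta_1^{2k}]$ is non-increasing in $d$) together with the elementary monotonicity of $(1-e^{-2a})/(2a)$ in $a\geq 1$, whose value at $a=1$ is $(1-e^{-2})/2<e^{-3/4}$; this uniform-in-$d$ verification is the main technical obstacle. Combined with the trivial bound $\bar g_{s,y}(\delta)\leq g_s(y)$ for $s>r_0^2$ and the Chebyshev-type rearrangement
\[
\frac{\int_0^{r_0^2}u(s)g_s(y)\,ds}{\Z u(s)g_s(y)\,ds}\geq\frac{\int_0^{r_0^2}g_s(y)\,ds}{\Z g_s(y)\,ds}=\frac{\Gamma(d/2-1,1/4)}{\Gamma(d/2-1)}
\]
(the last equality via $t=r_0^2/(4s)$), this yields $G_{B_\delta}(0,y)\geq G(y)-\bar G_y(\delta)\geq C_7\,G(y)$. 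For the second assertion the potential density need not be monotone, so the two monotonicity uses above must be replaced by WLSC-based estimates that compare $U[0,r_0^2)$ with $U[r_0^2,\infty)$, in the spirit of Proposition \ref{potential_subordinator} and the proof of Theorem \ref{PotentialKernel}; the resulting constant then depends on $d,\beta,C^*$ rather than being explicit.
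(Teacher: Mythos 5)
Your proof of the first assertion is correct, but it takes a genuinely different route from the paper's. The paper reduces immediately via the L\'evy-process Hunt formula \eqref{Hunt1} and radial monotonicity of $G$: since $|X_{\tau_D}-y|\geq \delta_D(y)\geq 2|x-y|$, one gets $G_D(x,y)\geq G(x-y)-G_0(2|x-y|)$, writes this difference as $\int_0^\infty\big(g_s(x-y)-g_s(2(x-y))\big)U(ds)$, and bounds it below exactly as you bound $G(y)-\bar G_y(\delta)$: the factor $1-e^{-3/4}$ comes from restricting to $s\leq |x-y|^2$, and the ratio $\Gamma(d/2-1,1/4)/\Gamma(d/2-1)$ from the same monotone-density rearrangement you use. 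You instead pass through the subordinate killed Brownian motion, $G_{B_\delta}(0,y)\geq\int_0^\infty g_s^{B_\delta}(0,y)\,u(s)\,ds$, and control the Hunt correction by the exact exit distribution (uniform on the sphere) rather than by the worst-case radial bound; the price is the spherical-average computation and the uniform-in-$d$ bound $\phi_d\leq\phi_3$ (both of which check out: $E[\Theta_1^{2k}]=(2k-1)!!/(d(d+2)\cdots(d+2k-2))$ is decreasing in $d$, and $(1-e^{-2a})/(2a)\leq (1-e^{-2})/2<e^{-3/4}$ for $a\geq1$). The two intermediate quantities coincide and you land on the same constant $C_7$. The paper's version is shorter because radial monotonicity of $G$ (available from Lemma \ref{LemmaWatanabe}) does all the geometric work that your exit-distribution computation does by hand.

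Your treatment of the second assertion, however, has a gap: as formulated it does not fit your own framework. When $\phi$ is not special, $U$ need not have a density at all, so the step where you shift $u(v+\tau^B)\leq u(v)$ inside the Hunt correction has no WLSC replacement; comparing $U[0,r_0^2)$ with $U[r_0^2,\infty)$ does not address this, because the problematic term is $E\int_{[\tau^B,\infty)}\bar g_{s-\tau^B,y}(\delta)\,U(ds)$, which involves a random shift of the measure itself. The repair is to abandon the subordinate-killed route for this case and use the paper's reduction, which needs no speciality: radial monotonicity of $G$ holds for every subordinate Brownian motion, so $G_D(x,y)\geq G(x-y)-G_0(2|x-y|)\geq(1-e^{-3/4})\int_0^{|x-y|^2}g_s(x-y)\,U(ds)$, and then the estimate established in the proof of Theorem \ref{PotentialKernel}, namely $\int_{\kappa|x-y|^2}^{|x-y|^2}g_s(x-y)\,U(ds)\geq (C_6/C_4)\,G(x-y)$ for $|x-y|\leq R$, supplies exactly the WLSC input you allude to and finishes the proof with a constant depending on $d,\beta,C^*$.
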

\begin{proof}Let us assume that $\delta_D(x)\leq \delta_D(y)$ and $x\neq y$. Since $2|x-y|\leq \delta_D(y)\leq |X_{\tau_D}-y|$, by (\ref{Hunt1}) and radial monotonicity,
\begin{equation}\label{GreenLower1}G_D(x,y)=G(x,y)-E^xG(X_{\tau_D}-y)\geq G(x-y)-G(2(x-y)).\end{equation}
Let us define a function $$K_p(q)=\int^\infty_0(4\pi s)^{-d/2}\left(e^{-\frac{p^2}{4s}}-e^{-\frac{q^2}{4s}}\right)U(ds),\qquad p,q\geq 0.$$
Due to (\ref{PotentialDensitySub}) we have
\begin{equation}\label{GreenLower2}G(x-y)-G(2(x-y))=K_{|x-y|}(2|x-y|).\end{equation}
Moreover,
\begin{eqnarray}
K_{|x-y|}(2|x-y|)&\geq& \int^{|x-y|^2}_0(4\pi s)^{-d/2}e^{-\frac{|x-y|^2}{4s}}\left(1-e^{-\frac{3|x-y|^2}{4s}}\right)U(ds)\nonumber\\
&\geq& \left(1-e^{-\frac{3}{4}}\right) \int^{|x-y|^2}_0(4\pi s)^{-d/2}e^{-\frac{|x-y|^2}{4s}}U(ds).\label{GreenLower3}
\end{eqnarray}

Suppose that  $\phi$ is a special Bernstein function. Then there exists a non-increasing function $u$ such that $U(ds)=u(s)ds$. Monotonicity  of $u$ yields
\begin{eqnarray*} \int_{|x-y|^2}^\infty(4\pi s)^{-d/2}e^{-\frac{|x-y|^2}{4s}}u(s)ds&\leq& \frac{\gamma\left(\frac{d}{2}-1,\frac{1}{4}\right)}{4\pi^{d/2}}u(|x-y|^2)|x-y|^{2-d}\\&\leq& \frac{\gamma\left(\frac{d}{2}-1,\frac{1}{4}\right)}{\Gamma\left(\frac{d}{2}-1,\frac{1}{4}\right)}\int^{|x-y|^2}_0(4\pi s)^{-d/2}e^{-\frac{|x-y|^2}{4u}}u(s)ds.\end{eqnarray*}
Hence,
$$G(x-y)\leq \left(1+\frac{\gamma\left(\frac{d}{2}-1,\frac{1}{4}\right)}{\Gamma\left(\frac{d}{2}-1,\frac{1}{4}\right)}\right) \int^{|x-y|^2}_0(4\pi s)^{-d/2}e^{-\frac{|x-y|^2}{4s}}u(s)ds\leq c_2 K_{|x-y|}(2|x-y|),$$
where $c_2=\frac{\Gamma\left(\frac{d}{2}-1\right)}{\Gamma\left(\frac{d}{2}-1,\frac{1}{4}\right)\left(1-e^{-\frac{3}{4}}\right)}$.
In consequence
$$G_D(x,y)\geq c_2^{-1} G(x-y),\qquad 2|x-y|\leq \delta_D(y).$$

Now, let us suppose that $\phi$ is only Bernstein function (it is no longer  assumed  that it is special) satisfying WLSC$(\beta,R^{-2},C^*)$. Then, by the proof of Theorem \ref{PotentialKernel} and Theorem \ref{PotentialKernelLP} there exists  a constant $\kappa<1$,  such that
$$\int^{|x-y|^2}_{\kappa|x-y|^2}g_s(x-y)U(ds)\geq \frac{C_6}{C_4} G(x-y), \qquad |x-y|\leq R.$$
Hence, by (\ref{GreenLower1})--(\ref{GreenLower3})
$$G_{D}(x,y)\geq \left(1-e^{-\frac{3}{4}}\right)\frac{C_6}{C_4} G(x-y).$$

\end{proof}

\begin{remark}\label{RemarkGreenPotential}Let $d\geq 3$. Suppose that $\phi$ is an unbounded special Bernstein function
then there exists a constant $C=C(d)$ such that, for any $r>0$
$$CG(x-y)\leq G_{B_r}(x,y)\leq G(x-y),\qquad x,y\in B(0,r/5).$$
 If $\phi$ is only a Bernstein function satisfying WLSC$(\beta,R^{-2},C^*)$, then there exists a constant $C=C(d,\beta, C^*)$ such that, for any $r\leq R$ the above inequality holds.
\end{remark}

\subsection{Harnack inequality and H\"{o}lder regularity}
The goal of this subsection is to prove the main results of this paper, that is Theorems \ref{Harnack} and \ref{Holder}. In this subsection we assume that $X_t$ is an isotropic unimodal L\'{e}vy process with the characteristic exponent satisfying WLSC$(\beta,\theta, C^*)$. Since $\psi$ satisfies the weak lower scaling condition, therefore it is unbounded. Hence, the potential measure is absolutely continuous. Let $R=\theta^{-1}$. By $L$ we denote the constant from Proposition \ref{GreenLowerLP} for $\varepsilon=C_7$ or $L=2$ in the case of special subordinate Brownian motions and let $r_0=\frac{r}{2L+1}$. Notice that by the proof of Proposition \ref{GreenLowerLP}, $2r_0\leq bR$, where $b$ is from Theorem \ref{PotentialKernelLP}.

In the proof of the following proposition we follow closely  the ideas of \cite{BogdanSztonyk}, where symmetric stable L\'{e}vy processes were considered.
\begin{proposition}\label{PropSuppportOut}Let $d\geq 3$ and $\psi$ satisfy WLSC$(\beta,R^{-1},C^*)$.
 Then there exists a constant $C_8=C(d,\beta,C^*)$ such that, for any $r\leq R$, and any non-negative function $H$ such that $\mathrm{supp} H \subset \overline{B_r}^c$,
$$E^xH(X_{\tau_{B_{r_0}}})\leq C_8 E^yH(X_{\tau_{B_r}}),\qquad x,y\in B_{\frac{r_0}{2}}.$$
\end{proposition}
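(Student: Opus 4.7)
The natural approach is to represent both sides through the Ikeda–Watanabe formula (\ref{IkedaWatanabe}) and reduce the inequality to a pointwise comparison of the resulting Poisson kernel densities. Since $\mathrm{supp}\,H \subset \overline{B_r}^c$ and $B_{r_0}\subset B_r$,
\begin{align*}
E^x H(X_{\tau_{B_{r_0}}}) &= \int_{\overline{B_r}^c} H(z)\,\kappa_x(z)\,dz, \qquad \kappa_x(z):=\int_{B_{r_0}} \nu(z-w)\,G_{B_{r_0}}(x,w)\,dw,\\
E^y H(X_{\tau_{B_r}}) &= \int_{\overline{B_r}^c} H(z)\,\kappa_y(z)\,dz, \qquad \kappa_y(z):=\int_{B_r} \nu(z-w)\,G_{B_r}(y,w)\,dw.
\end{align*}
It therefore suffices to establish $\kappa_x(z)\leq C_8\,\kappa_y(z)$ uniformly in $z$ with $|z|\geq r$ and $x,y\in B_{r_0/2}$.

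For the upper bound on $\kappa_x$, radial monotonicity of $\nu_0$ gives $\nu(z-w)\leq \nu_0(|z|-r_0)$ for $w\in B_{r_0}$ (since $|z-w|\geq |z|-r_0$), and combined with $\int_{B_{r_0}} G_{B_{r_0}}(x,w)\,dw = E^x\tau_{B_{r_0}}\leq c_1/\psi^*(r_0^{-1})$ (the latter from Corollary \ref{hApproxLemma} and the estimate $E^x\tau_{B_{r_0}}\leq E^x\tau_{B(x,2r_0)}$ used in the proof of Corollary \ref{JumpProb}), this yields $\kappa_x(z)\leq c_1\,\nu_0(|z|-r_0)/\psi^*(r_0^{-1})$.

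For the lower bound on $\kappa_y$, restrict the inner integration to the sub-ball $B(y, r_0/4)\subset B_r$. The choice of $L$ in Proposition \ref{GreenLowerLP} together with $r=(2L+1)r_0$ and $|y|\leq r_0/2$ yields $L|w-y|\leq Lr_0/4 \leq (2L+\tfrac{1}{2})r_0 \leq r-|y|$, so Proposition \ref{GreenLowerLP} gives $G_{B_r}(y,w)\geq \varepsilon_0 G(y-w)$ on this sub-ball. Since $|z-w|\leq |z|+3r_0/4$ there, radial monotonicity gives $\nu(z-w)\geq \nu_0(|z|+3r_0/4)$, and by symmetry of $G$ combined with Proposition \ref{PotMeasureLP} and the scaling of Lemma \ref{fStarScalling},
\[
\int_{B(y, r_0/4)} G(y-w)\,dw = G(B_{r_0/4})\geq c_2/\psi^*(r_0^{-1}).
\]
Hence $\kappa_y(z) \geq \varepsilon_0 c_2\,\nu_0(|z|+3r_0/4)/\psi^*(r_0^{-1})$.

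Combining the two bounds reduces the problem to the doubling-type inequality $\nu_0(|z|-r_0)\leq C\,\nu_0(|z|+3r_0/4)$ for all $|z|\geq r$. This is the main analytic obstacle. Note that for $|z|\geq r=(2L+1)r_0$ one has $(|z|+3r_0/4)/(|z|-r_0)\leq 1+7/(8L)$, so that the required ratio of arguments of $\nu_0$ is close to one and depends only on $L$, hence on $d,\beta,C^*$. To resolve it, one exploits the isotropic unimodal structure together with WLSC on $\psi$: the bounds relating $\nu$ and $\psi^*$ in Lemma \ref{PsiStarApprox} and Proposition \ref{psiStarPsi}, together with the radial monotonicity of $\nu_0$, furnish a doubling-type estimate for $\nu_0$ at the relevant mild scaling factor. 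The resulting constant $C_8$ depends only on $d,\beta,C^*$ through $L$, $\varepsilon_0$, $c_1$, $c_2$ and the absolute constants of the cited propositions.
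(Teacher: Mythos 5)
Your reduction via the Ikeda--Watanabe formula and your lower bound for $\kappa_y$ (restricting to a sub-ball of $B_r$ around $y$ and invoking Proposition \ref{GreenLowerLP} together with Proposition \ref{PotMeasureLP}) match the paper's strategy. However, the final step is a genuine gap: the pointwise doubling inequality $\nu_0(|z|-r_0)\leq C\,\nu_0(|z|+3r_0/4)$ does \emph{not} follow from isotropic unimodality plus WLSC for $\psi$, and is in fact false under the stated hypotheses. A non-increasing density can vanish abruptly: for the truncated stable process, $\nu_0(s)=s^{-d-\alpha}\mathbf{1}_{(0,1)}(s)$ (listed in the paper's examples, with $\psi$ satisfying WLSC$(\alpha,1,C)$), take $r=R=1$ and $|z|=1+r_0/2$; then $\nu_0(|z|-r_0)>0$ while $\nu_0(|z|+3r_0/4)=0$. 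The closeness of the ratio of the two arguments to $1$ does not help, and Lemma \ref{PsiStarApprox} and Proposition \ref{psiStarPsi} cannot rescue the claim, since they control only integrals of $\nu$ against $1\wedge(|z|r)^2$, never pointwise values of $\nu_0$.

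The paper avoids this obstacle entirely by never comparing pointwise values of $\nu_0$ at different radii. Both sides are bounded in terms of the \emph{same} quantity $\int_{B_{r_0}}\nu(u-z)\,du$. The key device in the upper bound for $P_{B_{r_0}}(x,\cdot)$ is the averaging inequality
$$\nu(w-z)\leq \frac{1}{|B_{r_0/8}|}\int_{B(w_z,\,r_0/8)}\nu(u-z)\,du \leq c\,r_0^{-d}\int_{B_{r_0}}\nu(u-z)\,du,\qquad w\in B_{3r_0/4},$$
where $w_z$ is $w$ shifted by $r_0/8$ \emph{toward} $z$, so that every $u$ in the small ball satisfies $|u-z|\leq|w-z|$ (giving the first inequality by radial monotonicity) while the ball stays inside $B_{r_0}$. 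The price is the factor $r_0^{-d}\int_{B_{3r_0/4}}G(x-w)\,dw\approx r_0^{-d}G(B_{2r_0})$, which is controlled by $G_0(2r_0)$ via Corollary \ref{GreenPotentialGreenFunction} --- and \emph{that} comparison is where WLSC genuinely enters. You should replace your pointwise bounds $\nu(z-w)\leq\nu_0(|z|-r_0)$ and $\nu(z-w)\geq\nu_0(|z|+3r_0/4)$ by this integral comparison; the rest of your argument then goes through.
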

\begin{proof}
Due to Lemma \ref{LemmaWatanabe}, the Ikeda-Watanabe formula (\ref{IkedaWatanabe}) and (\ref{Hunt1}) we obtain that the Poisson kernel of  $B_r$ exists  and
$$E^yH(X_{\tau_{B_r}})=\int_{B_r^c}H(z)P_{B_r}(y,z)dz, $$
where
$$P_{B_r}(y,z)=\int_{B_r}G_{B_r}(y,w)\nu(z-w)dw.$$
Hence, it is enough to prove there is a constant $c_1$ such that
 \begin{equation}\label{Supp1}P_{B_{r_0}}(x,z)\leq c_1 P_{B_{r}}(y,z),\qquad x,y\in B_{\frac{r_0}{2}},\,|z|>r.\end{equation}
By Proposition  \ref{GreenLowerLP} and radial monotonicity of $G$,
\begin{equation}\label{Supp2}P_{B_r}(y,z)\geq \int_{B_{r_0}}G_{B_r}(y,w)\nu(z-w)dw\geq C_7G_0\left(2r_0\right)\int_{B_{r_0}}\nu(z-w)dw.\end{equation}
Since $\nu$ is radially non-increasing, for $w\in B_{\frac{3}{4}r_0}$,
$$\label{nuUpperBound}\nu(w-z)\leq \frac{\int_{B\left(w_z,\frac{r_0}{8}\right)}\nu(u-z)du}{|B_{\frac{r_0}{8}}|}\leq c_2r_0^{-d}\int_{B_{r_0}}\nu(u-z)du,$$ where $w_z=w+\frac{r(z-w)}{8|z-w|}$ and $c_2=\frac{8^d}{|B_1|}$. Hence, by a doubling condition (see Remark \ref{doubling})  and radial monotonicity of $G$,
\begin{eqnarray*}
P_{B_{r_0}}(x,z)&\leq&\int_{B_{r_0}}G(x-w)\nu(w-z)dw\\&\leq& G_0\left(\frac{r_0}{4}\right)\int_{B_{r_0}\setminus B_{\frac{3}{4}r_0}}\nu(w-z)dw\\ &&+ c_2r_0^{-d}\int_{B_{r_0}}\nu(u-z)du\int_{ B_{\frac{3}{4}r_0}}G(x-w)dw\\
&\leq&  \left( c_3G_0(2r_0)+c_2r_0^{-d}\int_{ B\left(x,\frac{5}{4}r_0\right)}G(x-w)dw\right)\int_{B_{r_0}}\nu(u-z)du\\&\leq& c_4\left( {G}_0(2r_0)+(2r_0)^{-d}G(B_{2r_0})\right)\int_{B_{r_0}}\nu(u-z)du,
\end{eqnarray*}
where $c_4=c_3\vee (2^dc_2)$. Corollary \ref{GreenPotentialGreenFunction} provides
$$P_{B_{r_0}}(x,z)\leq c_5 {G}_0(2r_0) \int_{B_{r_0}}\nu(u-z)du,$$
for some constant $c_5=c_5(d,\beta,C^*)$. Due to  (\ref{Supp2}) this implies  (\ref{Supp1}), which completes the proof.
\end{proof}

In the following proposition we prove the Krylov-Safonov estimate, which is crucial for proving the scale invariant Harnack inequality. In the proof we use some ideas of \cite{SiSV}, Lemma 6.2.
\begin{proposition}\label{PropKrylovSafonov}Let $d\geq 3$ and $\psi$ satisfy WLSC$(\beta,R^{-1},C^*)$.  There exists a constant $C_9=C(d,\beta,C^*)$ such that for any $r\leq R$ and any compact $A\subset B_{r_0}$,
$$P^x(T_{A}<\tau_{B_r})\geq C_9\frac{|A|}{|B_{r_0}|},\qquad x\in B_{r_0}.$$
\end{proposition}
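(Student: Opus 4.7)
The plan is to follow the Bass--Levin style argument (as executed for symmetric stable processes in \cite{SiSV}, Lemma~6.2), rewriting the hitting probability as a ratio of Green-function integrals and then invoking the sharp estimates already available in this section. Concretely, for $x \in B_{r_0}$ and compact $A \subset B_{r_0}$, I start from
\begin{equation*}
\int_A G_{B_r}(x,y)\,dy \;=\; E^x\!\int_0^{\tau_{B_r}}\!\mathbf{1}_A(X_s)\,ds.
\end{equation*}
Since $X_s \notin A$ for $s < T_A$, and since $A$ is closed while $A \subset B_{r_0} \subset B_r$ (so that $X_{T_A}\in A$ on $\{T_A<\infty\}$ and $\tau_{B_r}=T_A+\tau_{B_r}\circ\theta_{T_A}$ on $\{T_A<\tau_{B_r}\}$), the strong Markov property at $T_A$ yields
\begin{equation*}
\int_A G_{B_r}(x,y)\,dy \;\leq\; P^x(T_A<\tau_{B_r})\,\sup_{z\in A}\int_A G_{B_r}(z,y)\,dy,
\end{equation*}
whence the key inequality
\begin{equation*}
P^x(T_A<\tau_{B_r}) \;\geq\; \frac{\int_A G_{B_r}(x,y)\,dy}{\sup_{z\in A}\int_A G_{B_r}(z,y)\,dy}.
\end{equation*}

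Next I bound the two integrals. For $x\in B_{r_0}$ and $y\in A\subset B_{r_0}$ one has $|x-y|\leq 2r_0$ and $r-|x|\geq r-r_0=2Lr_0\geq L|x-y|$, which is precisely the geometric hypothesis required to apply Proposition~\ref{GreenLowerLP} with $\varepsilon = C_7$ (the choice that fixed $L$ at the outset of the subsection). Combined with radial monotonicity of $G$, this produces
\begin{equation*}
\int_A G_{B_r}(x,y)\,dy \;\geq\; C_7\,G_0(2r_0)\,|A|.
\end{equation*}
For the denominator, the trivial bound $G_{B_r}\leq G$ together with $A\subset B(z,2r_0)$ for $z\in A$ gives
\begin{equation*}
\int_A G_{B_r}(z,y)\,dy \;\leq\; \int_{B(z,2r_0)}G(z-y)\,dy \;=\; G(B_{2r_0}).
\end{equation*}

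Plugging these into the basic inequality,
\begin{equation*}
P^x(T_A<\tau_{B_r}) \;\geq\; C_7\,\frac{G_0(2r_0)|A|}{G(B_{2r_0})}.
\end{equation*}
Since $r\leq R$ and $r_0=r/(2L+1)$ guarantee $2r_0\leq bR$, Corollary~\ref{GreenPotentialGreenFunction} delivers the two-sided comparison $G(B_{2r_0})\approx (2r_0)^d\,G_0(2r_0)$ with a constant depending only on $d,\beta,C^*$; substituting yields the claimed lower bound $C_9|A|/|B_{r_0}|$. The only delicate point is the geometric bookkeeping: one must check that the single choice $r_0=r/(2L+1)$ simultaneously makes Proposition~\ref{GreenLowerLP} applicable with constant $C_7$ \emph{and} keeps $2r_0$ inside the range where Corollary~\ref{GreenPotentialGreenFunction} is valid. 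Both requirements have been arranged by the definitions made at the start of the subsection, so once this alignment is verified, the remainder of the argument is a clean three-line computation.
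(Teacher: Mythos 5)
Your proof is correct, and it reaches the conclusion by a genuinely different mechanism than the paper. The paper's proof introduces the equilibrium measure $\rho_A$ and derives the key inequality
\begin{equation*}
P^x(T_A<\tau_{B_r})\;\geq\;\inf_{z\in A}G_{B_r}(x,z)\,\mathrm{Cap}(A),
\end{equation*}
after which the capacity of an arbitrary compact set is bounded from below by $C_3\psi^*(|A|^{-1/d})|A|$ via Watanabe's isoperimetric theorem (inequality (\ref{CapGeneral})); the potential-kernel lower bound of Theorem \ref{PotentialKernelLP} and Lemma \ref{fStarScalling} then finish the computation. You instead use the occupation-time ratio
\begin{equation*}
P^x(T_A<\tau_{B_r})\;\geq\;\frac{\int_A G_{B_r}(x,y)\,dy}{\sup_{z\in A}\int_A G_{B_r}(z,y)\,dy},
\end{equation*}
which is the original Bass--Levin/Krylov--Safonov device, and you replace the capacity input by the elementary bounds $\int_A G_{B_r}(z,\cdot)\leq G(B_{2r_0})$ and Corollary \ref{GreenPotentialGreenFunction}. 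Both arguments share Proposition \ref{GreenLowerLP} as the essential lower bound for $G_{B_r}$ on $B_{r_0}\times B_{r_0}$, and your two geometric checks ($r-|x|\geq 2Lr_0\geq L|x-y|$ and $2r_0\leq bR$) are indeed guaranteed by the choices of $L$ and $r_0$ made at the start of the subsection, so nothing is missing. What your route buys is economy: no equilibrium measures, no isoperimetric capacity inequality, only the potential-measure and potential-kernel estimates already in hand. What the paper's route buys is the sharper intermediate estimate $P^x(T_A<\tau_{B_r})\gtrsim G_0(2r_0)\,\mathrm{Cap}(A)$, which can exceed the volume bound for thin sets of small Lebesgue measure but large capacity; for the proposition as stated the two are equally strong.
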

\begin{proof} Let $B$ be an open set and $A\subset B$ be compact. Similarly to (\ref{CapDef}), let
$$G_B\rho_A(x)=\int_A G_B(x,y)\rho_A(dy).$$
Then, by the strong Markov property and (\ref{Hunt1}),
\begin{eqnarray*}
G_B\rho_A(x)&=&G\rho_A(x)-E^xG\rho_A(X_{\tau_B})=P^x(T_A<\infty)-E^xP^{X_{\tau_B}}(T_A<\infty)\\
&\leq&P^x(T_A<\infty)-E^x[P^{X_{\tau_B}}(T_A<\infty),T_A>\tau_B]= P^x(T_A<\tau_B).
\end{eqnarray*}
On the other hand
\begin{eqnarray*}
G_B\rho_A(x)&\geq&\inf_{y\in A}G_B(x,y)\rho_A(A).
\end{eqnarray*}
This implies
\begin{equation}\label{CapSetA}P^x(T_{A}<\tau_{B})\geq \inf_{z\in A}G_{B}(x,z) \mathrm{Cap}(A).\end{equation}
By Proposition  \ref{GreenLowerLP} and radial monotonicity of $G$, for $x\in B_{r_0}$,
$$\inf_{z\in B_{r_0}}G_{B_r}(x,z)\geq C_7 \inf_{z\in B_{r_0}}G(x-z)\geq C_7 G_0(2r_0).$$
By Theorem \ref{PotentialKernelLP}, $ G_0(2r_0) \geq  \frac{C_5}{(2r_0)^d\psi^*((2r_0)^{-1})}$. Combining this with  (\ref{CapSetA})  for $A\subset B= B_{r}$, and (\ref{CapGeneral}) we obtain
$$P^x(T_{A}<\tau_{B_r})\geq c_1\frac{ |A|\psi^*(|A|^{-1/d})}{r_0^d\psi^*((2r_0)^{-1})},$$
where $c_1=\frac{C_3C_5C_7}{2^d}$.
By  Lemma \ref{fStarScalling}, for $A\subset B_{r_0}$, there exists a constant $c_2=c_2(d)$ such that
$\frac{ \psi^*(|A|^{-1/d})}{\psi^*((2r_0)^{-1})}\geq c_2$. Hence,
$$ P^x(T_{A}<\tau_{B_r})\geq c_1c_2|B_1| \frac{|A|}{|B_{r_0}|}.$$
\end{proof}
Let us notice that until now,  under the assumption that $X_t$ is isotropic unimodal with its characteristic exponent satisfying WLSC$(\beta,R^{-1},C^*)$, all  the constants that appear in the paper  depend only on $d$, $\beta$, $C^*$.  None of them  depends on $R$ or  $\theta$, respectively.

Now, we are ready to prove the main results of our paper.
\begin{proof}[Proof of Theorem \ref{Harnack}]
We prove the result for bounded harmonic functions. The boundedness assumption   can be removed in a  similar way as in \cite{SongVondracek}, Theorem 2.4. Assume that $\psi$ satisfy WLSC$(\beta,\theta,C^*)$. Let $R_0>0$. We prove that there exists a constant $c_1=c_1(R_0)$ such that, for any function $h$ non-negative on $\Rd$ and  harmonic in a ball $B_r$, $r\leq R_0$, \begin{equation}\label{HI}\sup_{x\in B(0,r/2)}h(x)\leq C\inf_{x\in B(0,r/2)}h(x).\end{equation}

 Recall that $R=\theta^{-1}$. With  Propositions \ref{PropSuppportOut}  and \ref{PropKrylovSafonov} at hand   we can use the approach of Bass and Levin (\cite{BL})   to get the existence of constants $c_2=c_2(d,\beta,C^*)$ and $a=a(d,\beta,C^*)<1$ such that, for  any function $h$ non-negative and bounded on $\Rd$ and  harmonic in a ball $B_r$, $r\leq R$,
$$\sup_{x\in B_{ar}}h(x)\leq c_2\inf_{x\in B_{ar}}h(x).$$

Next, we use the standard chain argument to get
 $$\sup_{x\in B_{r/2}}h(x)\leq c_3\inf_{x\in B_{r/2}}h(x),$$
 where $c_3=c_3(d,c_2,a)$. If $R_0\leq R$ we have (\ref{HI}). Notice, that if $\psi$ satisfies the global weak lower scaling condition ($R=\infty$)  we get the global scale invariant Harnack inequality, since we can take $c_1=c_3$ and $c_3$ does not depend on $R_0$.
For $R_0>R$, one can use again the chain argument to get (\ref{HI}), for any harmonic function on $B_r$, $r\leq R_0$. But then the constant $c_1=c_1\left(d,c_3,\frac{R_0}{R}\right)$.
\end{proof}
To deal with dimension $d\le 2$ we use the idea from \cite{Mimica2012}, which relies on extending harmonic functions to higher dimensional spaces.

\begin{corollary}{\label{proj}}Let $d\leq2$. Suppose that $\psi$ satisfies WLSC$(\beta,\theta,C^*)$ and
 there exists an unimodal isotropic L\'{e}vy process $Y_t\in \R^3$, such that  $X_t$  is a projection of  $Y_t$. Then  the scale invariant Harnack inequality holds.
\end{corollary}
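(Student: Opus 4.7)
The plan is to lift every non-negative function harmonic with respect to $X_t$ in $\mathbb{R}^d$ to a harmonic function for $Y_t$ in $\mathbb{R}^3$, invoke Theorem \ref{Harnack} there (where the dimensional hypothesis $d\geq 3$ is satisfied), and descend back through the projection $\pi:\mathbb{R}^3\to\mathbb{R}^d$.

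\emph{Step 1 (Transfer of WLSC).} Since $X_t=\pi Y_t$, the characteristic functions give $\psi_X(\xi)=\psi_Y(\iota\xi)$ for $\xi\in\mathbb{R}^d$, where $\iota$ is the canonical embedding adjoint to $\pi$. Isotropy of both processes means each exponent depends only on the norm of its argument, and $|\iota\xi|=|\xi|$, so the radial profiles coincide: $\psi_{X,0}=\psi_{Y,0}$. Consequently $\psi_Y$ inherits the WLSC$(\beta,\theta,C^*)$ condition and Theorem \ref{Harnack} applies to $Y_t$ on $\mathbb{R}^3$.

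\emph{Step 2 (Lifting harmonicity).} Given $h\ge 0$ harmonic in $B_r\subset\mathbb{R}^d$ w.r.t. $X_t$, set $\tilde h(y):=h(\pi y)$ for $y\in\mathbb{R}^3$. Realize $X_t=\pi Y_t$ on a common probability space and let $D:=\pi^{-1}(B_r)$ be the cylinder. Then $\tau^Y_D=\tau^X_{B_r}$ and $\pi Y_{\tau^Y_D}=X_{\tau^X_{B_r}}$ on $\{\tau^Y_D<\infty\}$, so for $y\in D$,
$$E^y\tilde h(Y_{\tau_D})=E^{\pi y}h(X_{\tau^X_{B_r}})=h(\pi y)=\tilde h(y),$$
by harmonicity of $h$. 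A standard strong-Markov argument then upgrades this mean value for the single set $D$ to the mean value for every bounded open $B'$ with $\overline{B'}\subset D$: split $E^y\tilde h(Y_{\tau_{B'}})$ according to whether $Y_{\tau_{B'}}\in D$ or not. On $\{Y_{\tau_{B'}}\notin D\}$ we have $\tau_{B'}=\tau_D$, while on $\{Y_{\tau_{B'}}\in D\}$ the strong Markov property combined with the identity above at $y=Y_{\tau_{B'}}\in D$ gives $E^{Y_{\tau_{B'}}}\tilde h(Y_{\tau_D})=\tilde h(Y_{\tau_{B'}})$. In particular $\tilde h$ is harmonic on $B^{(3)}_r\subset D$ w.r.t. $Y_t$.

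\emph{Step 3 (Descent through the projection).} Apply Theorem \ref{Harnack} in $\mathbb{R}^3$ to $\tilde h$ to get a constant $C=C(r)$ with $\sup_{B^{(3)}_{r/2}}\tilde h\le C\inf_{B^{(3)}_{r/2}}\tilde h$. Because $\pi$ maps $B^{(3)}_{r/2}$ onto $B_{r/2}\subset\mathbb{R}^d$ and $\tilde h=h\circ\pi$, the supremum and infimum of $\tilde h$ over $B^{(3)}_{r/2}$ equal those of $h$ over $B_{r/2}$, which yields the desired Harnack inequality for $X_t$. The global version follows identically, since under global WLSC the constant in Theorem \ref{Harnack} is independent of $r$.

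The only technically delicate point is the strong-Markov upgrade in Step~2 — promoting the identity known only for the unbounded cylinder $D$ to the mean-value identity on arbitrary $B'\Subset D$ required by the definition of harmonicity. Every other ingredient (the equality of radial profiles, the identification of exit times through the projection, and the surjectivity of $\pi$ on balls) is immediate from isotropy and the definition of $Y_t$.
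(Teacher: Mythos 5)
Your proof is correct and follows essentially the same route as the paper: identify the radial profiles of $\psi$ and $\psi^Y$ so that WLSC transfers, lift $h$ to $\tilde h=h\circ\pi$ harmonic for $Y_t$ in the cylinder (hence in $B^{(3)}_r$), apply Theorem \ref{Harnack} in $\mathbb{R}^3$, and project back. Your Step 2 merely spells out the strong-Markov upgrade that the paper compresses into the phrase ``by the strong Markov property,'' and is a welcome clarification (modulo the harmless replacement of $\pi^{-1}(B_r)$ by $\pi^{-1}(B_\rho)$, $\rho<r$, so that the mean-value identity for $h$ is literally available from the definition of harmonicity).
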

\begin{proof}We  present only the one-dimensional case. Without loss of generality we can assume that $X_t=Y_t^{(1)}$, where $Y_t=(Y^{(1)}_t,Y^{(2)}_t, Y^{(3)}_t)$. Suppose that  $h$ is  harmonic and non-negative with respect to $X_t$ in $(-r,r)$, then by the strong Markov property a function $f:\R^3\to [0,\infty)$ defined by $f(x^{(1)},x^{(2)}, x^{(3)})=h(x^{(1)})$ is harmonic with respect to $Y_t$ in $(-r,r)\times \R^2$. Since $X_t, Y_t$ are isotropic then the characteristic exponent of  $Y_t$ denoted by $\psi^Y$  satisfies   $\psi^Y(x)=\psi_0(|x|)$. We recall that $\psi_0$ is the radial profile of  $\psi$. Hence, $\psi^Y$ satisfies WLSC$(\beta,R^{-1},C^*)$. Due to Theorem \ref{Harnack} the scale invariant Harnack inequality holds for $Y_t$, so it must  hold  for $X_t$.
\end{proof}

\begin{proof}[Proof of Theorem \ref{Holder}]
 With the Krylov-Safonov type estimate (Proposition \ref{PropKrylovSafonov}) and the second part of Corollary \ref{JumpProb} the  proof is similar to the proof in \cite{BL}, Theorem 4.1, therefore it is omitted.
\end{proof}
\begin{corollary}Let $d\leq2$. Suppose that $\psi$ satisfies WLSC$(\beta,\theta,C^*)$ and
 there exists an unimodal isotropic L\'{e}vy process $Y_t\in \R^3$, such that  $X_t$  is a projection of  $Y_t$. Then the conclusion of Theorem \ref{Holder} holds for $X_t$.
\end{corollary}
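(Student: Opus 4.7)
The strategy is identical in spirit to the proof of Corollary \ref{proj}: lift the problem from $\R^d$ to $\R^3$ via the projection $\pi$, apply Theorem \ref{Holder} to the three-dimensional process $Y_t$, and then restrict back. I will present the plan for $d=1$; the case $d=2$ is analogous after embedding into $\R^3$.

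First, assume $X_t = Y_t^{(1)}$, write $y = (y^{(1)},y^{(2)},y^{(3)}) \in \R^3$, and let $\pi(y) = y^{(1)}$. Given a bounded function $h$ on $\R$ that is harmonic with respect to $X_t$ in $(-r,r)$, define $f(y) = h(\pi y)$ on $\R^3$. By the strong Markov property exactly as in Corollary \ref{proj}, $f$ is harmonic with respect to $Y_t$ on the cylinder $(-r,r) \times \R^2$, and in particular on the ball $B_{\R^3}(0,r)$, since $|y|<r$ forces $|\pi y|<r$. Moreover $\|f\|_\infty = \|h\|_\infty$.

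Second, the characteristic exponent $\psi^Y$ of $Y_t$ is radial with the same radial profile $\psi_0$ as $\psi$, because both processes are isotropic and $X_t$ is a projection of $Y_t$. Hence $\psi^Y$ satisfies WLSC$(\beta,\theta,C^*)$. Since $Y_t$ is a three-dimensional isotropic unimodal L\'evy process, Theorem \ref{Holder} applies to $Y_t$: for any $R>0$ there exist $c=c(R)$ and $\delta>0$ such that for $0<r\leq R$,
$$|f(y_1)-f(y_2)| \leq c\,\|f\|_\infty \left(\frac{|y_1-y_2|}{r}\right)^{\delta}, \qquad y_1,y_2 \in B_{\R^3}(0,r/2).$$

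Third, for $x_1,x_2 \in (-r/2,r/2)$ embed them as $\tilde x_i = (x_i,0,0) \in B_{\R^3}(0,r/2)$. Then $|\tilde x_1 - \tilde x_2| = |x_1-x_2|$ and $h(x_i) = f(\tilde x_i)$, so the estimate above yields
$$|h(x_1)-h(x_2)| \leq c\,\|h\|_\infty \left(\frac{|x_1-x_2|}{r}\right)^{\delta},$$
which is the desired H\"older bound for $X_t$.

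The only delicate point is making sure the lifted function $f$ is harmonic on a genuine three-dimensional \emph{ball} (as required by Theorem \ref{Holder}) rather than merely on the cylinder $\pi^{-1}((-r,r))$; this is handled automatically by $B_{\R^3}(0,r) \subset \pi^{-1}((-r,r))$. Everything else is bookkeeping: $\|f\|_\infty = \|h\|_\infty$, the embedding preserves distances, and the constants $c,\delta$ inherited from the three-dimensional statement depend only on $R$, $d=3$, $\beta$, $\theta$, $C^*$, which is what the statement asks for.
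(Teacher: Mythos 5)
Your proof is correct and follows exactly the lifting argument the paper uses for Corollary \ref{proj} (the paper leaves this corollary unproved, the intended argument being the same one): lift $h$ to $f=h\circ\pi$ on $\R^3$, note $f$ is harmonic for $Y_t$ on the cylinder and hence on the ball $B_{\R^3}(0,r)$, apply Theorem \ref{Holder} in dimension $3$ using that $\psi^Y$ has the same radial profile as $\psi$ and so satisfies the same WLSC, and restrict to the isometrically embedded copy of $\R^d$. The bookkeeping points you flag (ball versus cylinder, $\|f\|_\infty=\|h\|_\infty$, distance preservation) are exactly the right ones and are handled correctly.
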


Let us remark that in general we can not find an  isotropic L\'{e}vy process  $Y_t$ in higher dimension such that
  $X_t$   is a projection of $Y_t$. If there exists such process $Y_t$, then $\nu_0(|x|)=\int_{\R^{3-d}}\nu_0^Y(\sqrt{|x|^2+|y|^2})dy$. Hence $\nu_0$  must be continuous on $(0,\infty)$.  Therefore if $\nu_0$ is not continuous the construction of $Y_t$ is impossible.

	On the other hand any Bernstein function defines a subordinate Brownian motion in every dimension  hence  the following theorem  holds with no  restriction on dimension.
\begin{theorem}\label{Harnack_SBM}
Let $d\geq 1$ and $X_t$ be a subordinate Brownian motion. Suppose that $\phi$ satisfies WLSC$(\beta,\theta,C^*)$.
Then the scale invariant Harnack inequality as well as   the conclusion of Theorem \ref{Holder} hold. Moreover, if  $\phi$ satisfies the global weak lower scaling condition, then the global scale invariant Harnack inequality holds.
\end{theorem}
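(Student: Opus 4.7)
The plan is to reduce Theorem \ref{Harnack_SBM} to the results already established for isotropic unimodal L\'evy processes, treating the cases $d\ge 3$ and $d\in\{1,2\}$ separately. The characteristic exponent of the subordinate Brownian motion is $\psi(x)=\phi(|x|^2)$, and since $\phi$ is increasing, a direct substitution shows that WLSC$(\beta,\theta,C^*)$ for $\phi$ implies WLSC$(2\beta,\sqrt{\theta},C^*)$ for $\psi$; moreover if $\theta=0$ then the global weak lower scaling condition is inherited. By Lemma \ref{LemmaWatanabe}, every subordinate Brownian motion is isotropic unimodal.

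For $d\ge 3$, the assumptions of Theorem \ref{Harnack} are therefore satisfied, giving the scale invariant Harnack inequality (and its global version when $\theta=0$). The H\"older regularity follows from Theorem \ref{Holder} in the same way.

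For $d\in\{1,2\}$, the strategy is to invoke Corollary \ref{proj} after realising $X_t$ as a coordinate projection of a three-dimensional isotropic unimodal L\'evy process. The natural candidate is $Y_t = W_{T_t}$, where $W_t$ is a three-dimensional standard Brownian motion independent of the subordinator $T_t$ driving $X_t$. Then $Y_t$ is itself a subordinate Brownian motion on $\R^3$ with Laplace exponent $\phi$, hence isotropic unimodal with characteristic exponent $\psi^Y(x)=\phi(|x|^2)$ for $x\in\R^3$, so $\psi^Y$ inherits the same WLSC parameters as $\psi$ above. Since we may take the first $d$ coordinates of $W_t$ to be a copy of the $d$-dimensional Brownian motion used to construct $X_t$, the process $X_t$ is exactly the projection of $Y_t$ onto the first $d$ coordinates. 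Corollary \ref{proj} (and its H\"older analogue stated immediately after Theorem \ref{Holder}) then yields the desired scale invariant Harnack inequality and H\"older regularity for $X_t$.

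The global statement in the case $\theta=0$ is immediate from this reduction, because the constants produced by Theorem \ref{Harnack} and Corollary \ref{proj} are independent of $R_0$ when $\psi^Y$ satisfies global WLSC. There is no real obstacle beyond checking that the ambient dimension of $Y_t$ can always be taken to be $3$ for subordinate Brownian motions (which bypasses the continuity obstruction for $\nu_0$ mentioned before the theorem), since the subordinator construction makes sense in every dimension simultaneously.
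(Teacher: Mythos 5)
Your proposal is correct and follows essentially the same route the paper intends: for $d\ge 3$ apply Theorems \ref{Harnack} and \ref{Holder} directly after noting that WLSC for $\phi$ transfers to $\psi(x)=\phi(|x|^2)$, and for $d\le 2$ realise $X_t$ as the coordinate projection of the three-dimensional subordinate Brownian motion driven by the same subordinator and invoke Corollary \ref{proj}. This is exactly the observation the paper makes just before the theorem ("any Bernstein function defines a subordinate Brownian motion in every dimension"), so no further comment is needed.
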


\subsection{Examples}
We begin with a result which is helpful in verifying the scaling conditions for the characteristic exponent.
\begin{proposition}\label{muAsymbol}Let $X_t$ be a L\'{e}vy process with a L\'{e}vy measure $\nu(dx)=\nu(x)dx$. Suppose that for some $r\in (0,\infty]$,  a  constant $c_1$ and a non-increasing function $f:(0,\infty)\to [0,\infty)$,
$$c_1^{-1}\frac{f(|x|)}{|x|^d}\leq \nu(x)\leq c_1\frac{f(|x|)}{|x|^d},\quad |x|<r.$$
If there exist $c_2\geq 1$ and $\beta>0$ such that $$f(\lambda s)\leq c_2\lambda^{-\beta}f(s), \quad \lambda>1\,\, \lambda s\leq r,$$
then $\psi$ satisfies WLSC($\beta,r^{-1},C^*$) for some $C^*$.
\end{proposition}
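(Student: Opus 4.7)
The plan is to reduce WLSC for $\psi$ to a clean scaling inequality for an auxiliary radial integral, establish the latter by a direct change of variables using the upper scaling of $f$, and then combine with the Gaussian and tail contributions.

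First, since $X_t$ is isotropic unimodal (the standing assumption of Section 3), Proposition \ref{psiStarPsi} combined with the trivial bound $\psi_0 \leq \psi^*$ gives $\psi(x) \approx \psi^*(|x|)$, while Lemma \ref{PsiStarApprox} gives $\psi^*(t) \approx \|A\| t^2 + J(t)$ with $J(t) = \int_{\R^d}(1\wedge (t|z|)^2)\,\nu(dz)$. It therefore suffices to establish WLSC$(\beta, r^{-1}, C)$ for $g(t) := \|A\| t^2 + J(t)$. For $t \geq r^{-1}$ I decompose $J(t) = J_1(t) + N$ with $J_1(t) = \int_{|z|\leq r}(1\wedge (t|z|)^2)\,\nu(dz)$ and $N = \nu(B(0,r)^c)$, using $t|z| \geq 1$ on $\{|z|\geq r\}$.

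By isotropy and the hypothesis $\nu(z)\approx f(|z|)/|z|^d$ on $|z|\leq r$, $J_1(t)$ is comparable (with constants involving $c_1$ and $d$) to the radial integral $F(t) := \int_0^r (1\wedge (ts)^2)\, f(s)\, s^{-1}\, ds$. The key claim is that $F(\lambda t) \geq c_2^{-1}\lambda^\beta F(t)$ for all $\lambda \geq 1$ and $t>0$. To prove it, change variables $u = \lambda s$ in $F(\lambda t)$ to obtain $F(\lambda t) = \int_0^{\lambda r}(1\wedge (tu)^2)\, f(u/\lambda)\, u^{-1}\, du$, restrict the domain to $(0, r] \subset (0,\lambda r]$, and apply the upper scaling hypothesis at $s = u/\lambda$ (valid because $\lambda s = u \leq r$): $f(u/\lambda) \geq c_2^{-1}\lambda^\beta f(u)$. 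Substituting this lower bound into the integrand gives the claim.

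To assemble WLSC for $g$, note first that $\beta < 2$ is forced by the integrability $\int_0^r s f(s)\,ds < \infty$ (a consequence of the Lévy condition $\int_{|z|<1}|z|^2\,\nu(dz)<\infty$ together with the lower bound on $\nu$); in particular $\|A\|(\lambda t)^2 \geq \lambda^\beta \|A\| t^2$. Together with $J_1(\lambda t) \geq c_0 \lambda^\beta J_1(t)$ for some $c_0 = c_0(c_1, c_2) > 0$ (transferring the WLSC for $F$ through the two-sided comparison $J_1 \approx F$) and the constancy of $N$, one gets $g(\lambda t) \geq \lambda^\beta(\|A\|t^2 + c_0 J_1(t)) + N$. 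Because $\nu \geq c_1^{-1}f(|z|)/|z|^d$ on $|z|\leq r$ and $f$ is nontrivial, $L := J_1(r^{-1}) > 0$, so $\|A\|t^2 + J_1(t) \geq L$ uniformly for $t \geq r^{-1}$. An elementary arithmetic manipulation comparing $(\lambda^\beta c_0 L + N)/(L+N)$ with $\lambda^\beta$ then yields $g(\lambda t)/g(t) \geq C^* \lambda^\beta$ with $C^* = c_0 L/(L+N)$, which is the desired WLSC.

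The main technical step is the change-of-variables establishing WLSC for $F$; it is essentially a one-line computation once one thinks to make the substitution. The only real subtlety is the tail term $N$: a purely termwise comparison of $g(\lambda t)/g(t)$ would produce only $\min(\lambda^\beta,1)$, and it is precisely the uniform lower bound $L>0$ available on $\{t\geq r^{-1}\}$ that lets $N$ be absorbed into the WLSC constant.
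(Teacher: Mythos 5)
Your proof is correct, and its core computation --- the substitution $u=\lambda s$ in the truncated integral $\int_0^r(1\wedge(ts)^2)f(s)s^{-1}\,ds$, restricting the domain back to $(0,r]$, and applying the scaling of $f$ at $s=u/\lambda$ --- is exactly the paper's key step. The assembly differs in two respects. First, you reduce to the Pruitt-type function $g(t)=\|A\|t^2+J(t)$ by applying Lemma \ref{PsiStarApprox} and Proposition \ref{psiStarPsi} to $X_t$ itself, whereas the paper introduces an auxiliary isotropic unimodal process $Y_t$ with L\'evy density $c_1^{-1}f(|z|)|z|^{-d}\mathbf{1}_{(0,r)}(|z|)$, proves global WLSC for $\psi^Y$ by the same change of variables, and then sandwiches $\psi^Y\leq\psi\leq c_1^2\psi^Y+\nu(B_r^c)$. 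The paper's device means unimodality is only ever used for the genuinely unimodal minorant $Y_t$; your route needs $X_t$ itself to be isotropic unimodal, which is the standing assumption of Section 3 but not among the hypotheses as stated (``Let $X_t$ be a L\'evy process\ldots''), and the proposition is intended to apply when only the restriction of $\nu$ to $B_r$ is controlled. To recover the full stated generality, replace your first step by the paper's minorant, or note directly that $\psi$ is bounded below by the $B_r$-part of the integral (comparable to a radial function of $|x|$) and above by that part plus $\|A\||x|^2+2\nu(B_r^c)$. Second, your treatment of the leftover terms is actually more careful than the paper's: the observation that $\beta<2$ is forced by $\int_0 sf(s)\,ds<\infty$, so that the Gaussian term scales at least like $\lambda^\beta$, fills a point the paper passes over (its inequality $\psi\leq c_1^2\psi^Y+\nu(B_r^c)$ silently omits the Gaussian part); and your absorption of the constant $N=\nu(B_r^c)$ via the uniform bound $J_1(t)\geq J_1(r^{-1})=L>0$ on $\{t\geq r^{-1}\}$, giving $C^*=c_0L/(L+N)$, is the same mechanism the paper implements through $\psi^Y(x)\geq\psi^Y(r^{-1})/12$ for $|x|\geq r^{-1}$. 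Both arguments implicitly require $f\not\equiv 0$ (otherwise $\psi$ may be bounded and the conclusion false), which you at least make explicit.
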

\begin{proof}
Let $\nu_0(s)=c_1^{-1}\frac{f(s)}{s^d}\textbf{1}_{(0,r)}(s)$ and $Y_t$ be an isotropic unimodal L\'{e}vy process with L\'{e}vy measure $\nu^Y(dx)=\nu_0(|x|)dx$.  By Corollary \ref{hApproxLemma} and Proposition \ref{psiStarPsi},
$$\psi^Y(x)\approx \int_{B_r}(1\wedge (|x||z|)^2 )f(|z|)\frac{dz}{|z|^d}, \quad x\in \Rd.$$
For $\lambda> 1$ and $x\in \Rd$ we have
\begin{eqnarray*}
\psi^Y(\lambda x) &\approx&\int_{B_{\lambda r}}(1\wedge (|x||z|)^2 )f(|z|/\lambda)\frac{dz}{|z|^d}
\geq\int_{B_r}(1\wedge (|x||z|)^2 )f(|z|/\lambda)\frac{dz}{|z|^d}\\
&\geq&c_2^{-1}\lambda^{\beta}\int_{B_r}(1\wedge (|x||z|)^2 )f(|z|)\frac{dz}{|z|^d}\approx \lambda^{\beta}\psi^{Y}(x).
\end{eqnarray*}
That is $\psi^Y$ satisfies the global WLSC. Moreover,
$$\psi^Y(x)\leq \psi(x)\leq c_1^2\psi^Y(x)+\int_{B^c_r}\nu(z)dz, \quad x\in \Rd.$$
Hence, if $r=\infty$ we get that $\psi$ satisfies WLSC($\beta,0,C^*$) for some $C^*$. If $r<\infty$, by Proposition \ref{psiStarPsi}, for $|x|\geq r^{-1}$,
$$\psi^Y(x)\geq \psi^Y(r^{-1})/12\geq c(r)\int_{B^c_r}\nu(z)dz,$$
what ends the proof.
\end{proof}

Recall that $X_t$ is an isotropic unimodal L\'{e}vy process and $\nu(dx)=\nu_0(|x|)dx$. If we do not mention otherwise, then  $d\geq 3$.
\begin{example}
Let $A=0$,  $\nu_0(r)=\frac{f(r)}{r^d}$, $r\in(0,1)$, where $f(r)$ is non-increasing and non-negative and let  $\beta>0$. If $f(\lambda r)\leq c\lambda^{-\beta}f(r)$, for $\lambda>1$ and $\lambda r\leq 1$  then due to Proposition \ref{muAsymbol} and Theorem \ref{Harnack} the scale invariant Harnack inequality holds.
\end{example}

\begin{example}
Let $A=0$, $\nu_0(r)=\frac{f(r)}{r^d}$, $r\in(0,\infty)$, where $f(r)$ is non-increasing and non-negative  and let $\beta>0$. If $f(\lambda r)\leq c\lambda^{-\beta}f(r)$, for $r>0$ and $\lambda>1$ then the global scale invariant Harnack inequality holds.
For instance this example is applicable for the following processes, ($\alpha,\alpha_1\in(0,2)$):
\begin{itemize}
\item Isotropic $\alpha$-stable process ($f(r)=r^{-\alpha}$), for $d\geq 1$.
\item Relativistic stable process ($f(r)\approx r^{-\alpha}(1+r)^{(\alpha+d-1)/2}e^{-r} $), for $d\geq 1$.
\item Truncated stable process ($f(r)=r^{-\alpha}\textbf{1}_{(0,1)}(r)$).
\item Tempered stable process ($f(r)=r^{-\alpha}e^{-r}$).
\item Isotropic Lamperti stable process ($f(r)=re^{\delta r}(e^s-1)^{-\alpha-1}$, $\delta<\alpha+1$).
\item Layered stable process ($f(r)=r^{-\alpha}\textbf{1}_{(0,1)}(r)+ r^{-\alpha_1}\textbf{1}_{[1,\infty)}(r)$).
\end{itemize}
The scale invariant Harnack inequality for all these examples are known, for instance by \cite{ChenKimKumagai}, but to our best knowledge the global  one only for the first and the last one (see e.g. \cite{ChenKumagai}). Another example  to which our result applies is $f(r)= r^{-2}\log^{-2}(1+r^{-\delta})$, for $\delta<1$.
 Note that $f$ does not satisfy the condition (1.5) in \cite{ChenKimKumagai}, so the scale invariant Harnack inequality can not be concluded from \cite{ChenKimKumagai}.
\end{example}

\begin{example}
Let $\phi$ be a Bernstein function comparable with a function regularly  varying at infinity with index $\alpha$.
If $\alpha \in (0,1]$ the scale invariant Harnack inequality holds for the corresponding  subordinate Brownian motion. This covers for instance  results of \cite{Mimica},
where a particular $\phi(\lambda)=\frac{\lambda}{\log(1+\lambda)}-1$ was considered, for which we even have the global scale invariant Harnack inequality due to Theorem \ref{Harnack}.
\end{example}

\begin{example}
Let $\psi_1$ satisfy WLSC$(\beta_1,0,c_1)$ and $\psi_2$ satisfy WLSC$(\beta_2,0,c_2)$. Then $\psi_1+\psi_2$ satisfies WLSC$(\beta_1\wedge\beta_2,0,c_1\wedge c_2)$. Hence, if $\psi_1$, $\psi_2$ are  L\'{e}vy-Khinchine exponents of  isotropic unimodal L\'{e}vy processes, then the global scale invariant Harnack inequality holds with constant depending only on dimension,  $\beta_1\wedge\beta_2$ and $ c_1\wedge c_2$. In particular  the global scale invariant Harnack inequality holds for a sum of two independent isotropic $\alpha$-stable process with exponents $\psi_1(x)=b_1|x|^{\alpha_1}$ and $\psi_2(x)=b_2|x|^{\alpha_2}$, where $0<\alpha_1\leq \alpha_2\leq 2$. Moreover, the constant in the Harnack inequality depends only on dimension and $\alpha_1$ in this case.
\end{example}

\begin{example}
Let $X_t$ be  an isotropic unimodal L\'{e}vy process with the characteristic exponent $\psi$, independent of a Brownian motion $B_t$, then the scale invariant Harnack inequality holds for $X_t+aB_t$, $a>0$. If additionally $\psi$ satisfies the following
$$\psi(x)\leq C \kappa ^{-\beta}\psi(\kappa x),\qquad |x|\leq 1,\, \kappa<1,$$
for some constants $C$ and $\beta>0$, then the global scale invariant Harnack inequality holds for  $X_t+aB_t$, $a>0$.
\end{example}
Of course for all of the above examples H\"{o}lder continuity for bounded harmonic functions holds as well.

\section{Applications to more general L\'{e}vy Processes}
Let $X_t$ be a general L\'{e}vy process and $d\geq 3$. In this section we relax the assumptions and comment on  validity of the previous results in this new setting.

We set three conditions which to some extent  replace the core assumption of the previous section that the process is isotropic unimodal.
\begin{description}
  \item[(A1)] Assume that $\nu(dx)=\nu(x)dx$ and there exist  constants $C^*_1$, $R>0$ such that $$\nu(x-y)\leq C^*_1r^{-d}\int_{B(x,r)}\nu(y-z)dz, \qquad \mathrm{for any }\,\,r<|x-y|/2\wedge R.$$
	
  \item[(A2)] Assume that $G(dx)=G(x)dx$, $x\neq 0$, and there are  constants $C^*_2$, $R>0$ such that $(C^*_2)^{-1} \tilde{G}(|x|)\leq G(x)\leq C^*_2 \tilde{G}(|x|)$, for $|x|\leq R$ and $\tilde{G}$ is non-increasing.
  \item[(A3)] There exists a constant $C^*_3$ such that $|\Im \psi(x)|\leq C^*_3 \Re \psi(x)$ and  $\psi^*(|x|)\leq C^*_3 \Re\psi(x)$, $x\in\Rd$.
  \end{description}

Notice that under (A3) process is transient ($d\geq3$).

In Remark \ref{hApproxRem} we explain that the claim of Corollary \ref{hApproxLemma} holds if  $|\Im\psi(x)|\leq C^*_3\Re\psi(x)$, $x\in\R^d$. Of course then the comparability constant will  depend on $C^*_3$. This condition is also sufficient to get (\ref{JumpProb1}). The second claim of Corollary \ref{JumpProb} holds if we assume additionally that $\Re\psi$ satisfies WLSC$(\beta,\theta,C)$. If we assume {(A3)} we infer the claim of Lemma \ref{Laplace_potential_LP}. Indeed, under (A3) we have
$$\lambda\mathcal{L}f(\lambda)=\frac{1}{(4\pi)^{d/2}}\int_{\R^d}e^{-|x|^2/4}\frac{\Re\psi(x)}{|\psi(x)|^2}dx\approx \int_{\R^d}e^{-|x|^2/4}\frac{dx}{\psi^*(|x|)}.$$
In the proof of Proposition \ref{PotMeasureLP} we used only Lemma \ref{Laplace_potential_LP} and Lemma \ref{fStarScalling}, hence the conclusion of Proposition \ref{PotMeasureLP} holds under (A3).

In the proof of a counterpart of Proposition \ref{CapApprox} and (\ref{CapGeneral}) we use the following theorem.
\begin{theorem}{(\cite{Hawkes1979}, Theorem 3.3)}\label{HawkesThm}
Let $X_1$ and $X_2$ be L\'{e}vy processes having exponents $\psi_1$ and
$\psi_2$, and capacities $\mathrm{Cap}^{\lambda}_1$ and $\mathrm{Cap}_2^\lambda$ respectively. If $\lambda>0$ and $C > 0$
are such that
$$\Re \left(\frac{1}{\lambda +\psi_1(x)}\right) \leq  C\Re\left(\frac{1}{\lambda +\psi_2(x)}\right)\quad \mathrm{for all }\,\, x,$$
then
$$\mathrm{Cap}^\lambda_2(A)\leq 4C  \mathrm{Cap}^\lambda_1(A)$$
for any analytic set $A$.
\end{theorem}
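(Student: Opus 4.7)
Plan. My strategy is to reduce the capacity comparison to a pointwise inequality between two non-negative Fourier densities and then invoke a variational (energy) characterization of the $\lambda$-capacity. The key observation is that
$$\Re\!\left(\frac{1}{\lambda+\psi(x)}\right) = \frac{\lambda+\Re\psi(x)}{|\lambda+\psi(x)|^2}$$
is precisely the Fourier transform of the symmetrization $\tfrac12(U^\lambda(dz)+U^\lambda(-dz))$ of the $\lambda$-resolvent measure $U^\lambda$, and it is non-negative because $\Re\psi\geq 0$. Thus the hypothesis says exactly that the symmetrized $\lambda$-resolvents of $X_1$ and $X_2$ are pointwise comparable on the Fourier side.

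First I would establish, for each process $X_i$ and each compact $A$, the energy--capacity identity
$$\mathrm{Cap}_i^\lambda(A)^{-1} \;\asymp\; \inf_{\mu}\,(2\pi)^{-d}\!\int_{\Rd}\Re\!\left(\frac{1}{\lambda+\psi_i(x)}\right)|\hat\mu(x)|^2\,dx,$$
the infimum being over probability measures $\mu$ supported on $A$, with two-sided universal constants. One natural route is to pass to the symmetric L\'evy process $\tilde X_i=X_i-X_i'$ (with $X_i'$ an independent copy), whose characteristic exponent is $2\Re\psi_i$, and to invoke the classical Dirichlet-form energy identity for the transient symmetric process $\tilde X_i$; Parseval's identity then rewrites the mutual energy $\iint u^\lambda_{\text{sym}}(x-y)\mu(dx)\mu(dy)$ in terms of $|\hat\mu|^2$ weighted by the Fourier-side density. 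A standard first-hitting comparison between $X_i$ and $\tilde X_i$ relates $\mathrm{Cap}_i^\lambda(A)$ and $\mathrm{Cap}_{\tilde X_i}^\lambda(A)$ up to universal constants that together produce the factor $4$ in the conclusion.

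With the energy identity in hand, the hypothesis gives, for every probability measure $\mu$ on $A$,
$$\int_{\Rd}\Re\!\left(\frac{1}{\lambda+\psi_1(x)}\right)|\hat\mu(x)|^2\,dx \;\leq\; C\!\int_{\Rd}\Re\!\left(\frac{1}{\lambda+\psi_2(x)}\right)|\hat\mu(x)|^2\,dx.$$
Taking the infimum over $\mu$ and inverting flips the inequality, yielding $\mathrm{Cap}_2^\lambda(A)\leq 4C\,\mathrm{Cap}_1^\lambda(A)$; the constant $4$ collects the two universal factors from the energy identity (one per process). Passage from compact to analytic $A$ is standard via inner/outer capacitability.

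The main obstacle is the first step: the clean formula $\mathrm{Cap}^{-1}\asymp\inf\mathcal{E}$ is textbook material for symmetric transient processes, but here $\psi$ is complex, so one must phrase everything via the symmetric auxiliary process $\tilde X$ and track the universal constants carefully -- those constants are precisely what generate the factor $4$. A direct alternative is a Hunt--Fitzsimmons minimax argument applied to the bilinear form $(f,g)\mapsto (2\pi)^{-d}\!\int\Re(1/(\lambda+\psi))\,\hat f\,\overline{\hat g}\,dx$, which avoids symmetrization but requires identifying this form with a Dirichlet form intrinsically attached to $X$.
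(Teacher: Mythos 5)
First, a point of reference: the paper does not prove this statement at all --- it is imported verbatim from Theorem 3.3 of \cite{Hawkes1979} --- so the relevant comparison is with Hawkes's original argument rather than with anything in the present text. Your overall skeleton does match that argument: Hawkes deduces Theorem 3.3 from a two-sided energy characterization of $\mathrm{Cap}^\lambda$ for a \emph{general} (non-symmetric) L\'evy process, of the form $\tfrac12 I(K)^{-1}\leq \mathrm{Cap}^\lambda(K)\leq 2\, I(K)^{-1}$ with $I(K)=\inf_\mu (2\pi)^{-d}\int \Re\bigl(1/(\lambda+\psi)\bigr)|\hat\mu|^2\,dx$, and the constant $4$ is exactly the product of the two factors of $2$. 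Your identification of $\Re\bigl(1/(\lambda+\psi)\bigr)$ with the Fourier transform of the symmetrized resolvent measure, its non-negativity, and the final ``take infima over $\mu$ and invert'' step are all correct, as is the capacitability remark for passing from compact to analytic sets.

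The gap is in how you propose to obtain the energy identity, which is where all the difficulty lives. You reduce to the symmetrized process $\tilde X=X-X'$ with exponent $2\Re\psi$ and assert a ``standard first-hitting comparison'' giving $\mathrm{Cap}^\lambda_X(A)\asymp\mathrm{Cap}^\lambda_{\tilde X}(A)$ with universal constants. No such two-sided comparison exists. The energy density attached to $X$ is $\Re\bigl(1/(\lambda+\psi)\bigr)=(\lambda+\Re\psi)/|\lambda+\psi|^2$, while that attached to $\tilde X$ is $1/(\lambda+2\Re\psi)$; one always has $\Re\bigl(1/(\lambda+\psi)\bigr)\leq 2/(\lambda+2\Re\psi)$, but the reverse inequality fails when $|\Im\psi|\gg\Re\psi$ (e.g.\ a stable process with a large drift), and correspondingly only one of the two capacity inequalities holds in general. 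Indeed, if your claimed comparison were true, the capacity would depend on $\Re\psi$ alone up to universal constants, and the hypothesis of the theorem would have been phrased in terms of $\Re\psi$ rather than $\Re\bigl(1/(\lambda+\psi)\bigr)$; the entire content of the relevant energy theorem (Hawkes's Theorem 3.1, in the line of Kanda) is that the correct kernel is the latter. That theorem is proved not by symmetrization but by playing the equilibrium measure of $X$ against the co-equilibrium measure of the dual process $\hat X=-X$ (both have total mass $\mathrm{Cap}^\lambda(K)$, and their mutual energies with respect to $u^\lambda$ and $\hat u^\lambda$ produce the factors of $2$). Your alternative suggestion of a minimax argument for the bilinear form built from $\Re\bigl(1/(\lambda+\psi)\bigr)$ points in the right direction, but as written it is a placeholder for precisely the step that needs proving.
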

\begin{lemma}\label{SBM_LP}
For any L\'{e}vy process $X_t$ there exists a subordinate Brownian motion with the characteristic exponent $\phi(|x|^2)$ such that, for $r\geq 0$,
$$\frac{1}{8(1+2d)}\phi(r^2)\leq\psi^*(r)\leq 4\phi(r^2).$$
\end{lemma}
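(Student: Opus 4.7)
The plan is to construct the subordinator explicitly by pushing the L\'evy measure $\nu$ forward to the half-line via the map $z\mapsto |z|^2$, then to compare the resulting Bernstein function with the radial majorant $\psi^*$ using Lemma \ref{PsiStarApprox}.

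Concretely, define a measure $\mu$ on $(0,\infty)$ by $\mu(E)=\nu(\{z\in\R^d\setminus\{0\}:|z|^2\in E\})$. Since $\nu$ is a L\'evy measure, a change of variables gives
$$\int_0^\infty(1\wedge u)\,\mu(du)=\int_{\R^d}(1\wedge|z|^2)\,\nu(dz)<\infty,$$
so $\mu$ is admissible as the L\'evy measure of a subordinator, and
$$\phi(\lambda):=\|A\|\lambda+\int_0^\infty(1-e^{-\lambda u})\,\mu(du)$$
is a Bernstein function. Take the subordinate Brownian motion whose subordinator has Laplace exponent $\phi$; its characteristic exponent is $\phi(|x|^2)$.

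Next I would use the elementary two-sided bound $\tfrac12(1\wedge u)\leq 1-e^{-u}\leq 1\wedge u$ (the right inequality is immediate; the left follows by comparing $u\mapsto 1-e^{-u}$ with $u\mapsto u/2$ on $[0,1]$ and using $1-e^{-u}\geq 1/2$ on $[1,\infty)$). Pushing this comparison through the integral and adding the Gaussian term, one obtains, with $I(r):=\|A\|r^2+\int_{\R^d}(1\wedge(r|z|)^2)\,\nu(dz)$,
$$\tfrac12 I(r)\leq\phi(r^2)\leq I(r),\qquad r\geq 0.$$

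Finally I would invoke Lemma \ref{PsiStarApprox} in the form $\tfrac1{8(1+2d)}I(r)\leq\psi^*(r)\leq 2I(r)$, and chain the two bounds: the upper estimate gives $\psi^*(r)\leq 2I(r)\leq 4\phi(r^2)$, while the lower estimate gives $\psi^*(r)\geq I(r)/[8(1+2d)]\geq \phi(r^2)/[8(1+2d)]$, which is exactly the claimed inequality. There is no real obstacle here; the only minor point requiring care is checking that $\mu$ is a genuine L\'evy measure on $(0,\infty)$ (which is where the isotropy hypothesis is \emph{not} needed, since we only use $\int(1\wedge|z|^2)\,\nu(dz)<\infty$).
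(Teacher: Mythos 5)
Your proposal is correct and follows essentially the same route as the paper: the paper defines $\phi(r)=\|A\|r+\int_{\R^d}(1-e^{-|z|^2 r})\,\nu(dz)$ directly (which is exactly your pushforward construction written as an integral over $\R^d$), applies the same bound $\tfrac12(1\wedge u)\leq 1-e^{-u}\leq 1\wedge u$, and concludes via Lemma \ref{PsiStarApprox}. Your extra step of verifying that the pushforward measure is a genuine L\'evy measure on $(0,\infty)$ is a harmless (and welcome) bit of added care.
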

\begin{proof}
Let us define a Bernstein function $\phi$ by the formula
$$\phi(r)=\int_{\R^d}\left(1-e^{-|z|^2r}\right)\nu(dz)+||A||r.$$
Since $\frac{1}{2}\left(1\wedge u\right)\leq 1-e^{-u}\leq 1\wedge u$
$$\frac{1}{2}\int_{\R^d}\left(1\wedge(|z|r)^2\right)\nu(dz)+||A||r^2\leq \phi(r^2)\leq \int_{\R^d}\left(1\wedge(|z|r)^2\right)\nu(dz)+||A||r^2,$$
which completes the proof due to Lemma \ref{PsiStarApprox}.
\end{proof}
The following proposition is a counterpart of Proposition \ref{CapApprox} and (\ref{CapGeneral}).
\begin{proposition}
Let (A3) hold. Then
$$\mathrm{Cap}(\overline{B}_r)\approx r^{d}\psi^*(r^{-1}).$$
Moreover, for any non-empty Borel set $A$ we get
$$\mathrm{Cap}(A)\geq C(d,C^*_3)\psi^*(|A|^{-1/d})|A|.$$
\end{proposition}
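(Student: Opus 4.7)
The plan is to compare $X_t$ with an auxiliary subordinate Brownian motion and then invoke Proposition \ref{CapApprox} together with (\ref{CapGeneral}). Let $Y_t$ be the subordinate Brownian motion supplied by Lemma \ref{SBM_LP}, whose characteristic exponent $\psi^Y(x)=\phi(|x|^2)$ is real-valued and satisfies $\psi^Y(x)\approx \psi^*(|x|)$ with comparability constant depending only on $d$. Since $Y_t$ is a subordinate Brownian motion it is automatically isotropic unimodal, so Proposition \ref{CapApprox} and (\ref{CapGeneral}) apply to $Y_t$ with constants depending only on $d$.

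First I would establish the resolvent comparison
$$\Re\!\left(\frac{1}{\lambda+\psi(x)}\right)=\frac{\lambda+\Re\psi(x)}{(\lambda+\Re\psi(x))^{2}+(\Im\psi(x))^{2}}\approx\frac{1}{\lambda+\psi^*(|x|)},$$
valid for every $\lambda>0$ and $x\in\Rd$, with comparability constant depending only on $C^*_3$. The bound $|\Im\psi|\le C^*_3\Re\psi$ from (A3) forces $|\lambda+\psi(x)|^{2}\approx (\lambda+\Re\psi(x))^{2}$, while the second part of (A3) gives $\psi^*(|x|)\approx \Re\psi(x)$, promoting $\Re\psi$ to $\psi^*$. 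Combining this with Lemma \ref{SBM_LP} yields
$$\Re\!\left(\frac{1}{\lambda+\psi(x)}\right)\approx \frac{1}{\lambda+\psi^Y(x)}=\Re\!\left(\frac{1}{\lambda+\psi^Y(x)}\right),$$
with constants depending only on $d$ and $C^*_3$.

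Next, Hawkes' theorem (Theorem \ref{HawkesThm}) applied in both directions gives $\mathrm{Cap}^\lambda(A)\approx \mathrm{Cap}^\lambda_Y(A)$ for every $\lambda>0$ and every analytic set $A$. Letting $\lambda\downarrow 0$ and using transience of both processes (recall $d\ge 3$ under (A3)), I would conclude $\mathrm{Cap}(A)\approx \mathrm{Cap}_Y(A)$. Specialising to $A=\overline{B_r}$ and applying Proposition \ref{CapApprox} to $Y_t$ gives $\mathrm{Cap}_Y(\overline{B_r})\approx r^{d}(\psi^Y)^*(r^{-1})$, which by Lemma \ref{SBM_LP} is comparable to $r^{d}\psi^*(r^{-1})$; this is the first claim. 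For a general non-empty Borel set $A$, applying (\ref{CapGeneral}) to $Y_t$ gives $\mathrm{Cap}_Y(A)\ge c\,(\psi^Y)^*(|A|^{-1/d})\,|A|\approx \psi^*(|A|^{-1/d})|A|$, yielding the second claim.

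The only delicate point is the passage $\lambda\downarrow 0$ in the capacity comparison; this is standard for transient processes, but if one prefers to avoid it one can work at the specific level $\lambda=\psi^*(r^{-1})$ and combine Hawkes' theorem with the refined estimate $\mathrm{Cap}^\lambda(\overline{B_r})\approx r^d(\lambda+\psi^*(r^{-1}))$ from the remark following Proposition \ref{CapApprox}, reaching the same conclusion directly. Along either route the entire argument reduces the general-process statement to the isotropic unimodal estimates already proved, with (A3) serving as the sole bridge.
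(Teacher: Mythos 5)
Your proof is correct and follows essentially the same route as the paper: the resolvent comparison $\Re\bigl(1/(\lambda+\psi)\bigr)\approx 1/(\lambda+\psi^*(|\cdot|))$ from (A3), Hawkes' theorem applied in both directions against the auxiliary subordinate Brownian motion of Lemma \ref{SBM_LP}, the limit $\lambda\downarrow 0$, and then Proposition \ref{CapApprox} and (\ref{CapGeneral}) for that unimodal process. No substantive differences.
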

\begin{proof}Let $\lambda>0$. Then, by (A3),
$$\frac{1}{(1+(C^*_3)^2)(\lambda+ \psi^*(|x|))}\leq \Re \left(\frac{1}{\lambda +\psi(x)}\right)\leq \frac{C^*_3}{\lambda+ \psi^*(|x|)}.$$
Hence, by Theorem \ref{HawkesThm} and Lemma \ref{SBM_LP} we obtain, for any analytic set $A$,
$$\frac{1}{32(1+8d)C^*_3}\mathrm{Cap}^\lambda_{\phi}(A)\leq \mathrm{Cap}^\lambda(A)\leq 16(1+(C^*_3)^2)  \mathrm{Cap}^\lambda_{\phi}(A),$$
where $ \mathrm{Cap}^\lambda_{\phi}$ denote the capacity of a subordinate Brownian motion $Y_t$ with the characteristic exponent $\phi$ defined in Lemma  \ref{SBM_LP}. Since $\mathrm{Cap}(A)=\lim_{\lambda\to 0^+}\mathrm{Cap}^{\lambda}(A)$ the above inequality holds also for $\lambda=0$.
Finally, we use (\ref{CapApprox}), (\ref{CapGeneral}) for $Y_t$ and again Lemma  \ref{SBM_LP} to get the conclusion.
\end{proof}
To get conclusions of Theorem \ref{PotentialKernelLP}, Corollary \ref{GreenPotentialGreenFunction} and Proposition \ref{GreenLowerLP} it is enough to assume (A2), (A3) and the weak lower scaling condition for $\Re\phi$. Under the same assumptions Proposition \ref{PropKrylovSafonov} holds. We additionally need  to assume (A1) to prove Proposition \ref{PropSuppportOut}. Finally we have the following theorems.
\begin{theorem}
Suppose that (A1)-(A3) hold and $\Re\psi$ satisfies the weak lower scaling condition, then the scale invariant Harnack inequality holds.
\end{theorem}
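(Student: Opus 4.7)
The plan is to replay the proof of Theorem \ref{Harnack} under the relaxed hypotheses, since the Bass--Levin machinery invoked there needs only two inputs: the Krylov--Safonov type lower bound of Proposition \ref{PropKrylovSafonov} and the Poisson kernel domination of Proposition \ref{PropSuppportOut}. Once both are available in the present setting, the same scheme from \cite{BL} together with the standard chain argument, exactly as in the proof of Theorem \ref{Harnack}, yields the scale invariant Harnack inequality on $B_{r/2}$ for $r$ up to a scale comparable to $R$, and a further chain argument upgrades it to any $r \leq R_0$ with a constant depending on $R_0/R$.

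For Proposition \ref{PropKrylovSafonov} the preceding discussion has already collected the required auxiliary facts in the new setting: under (A3) one recovers Corollary \ref{hApproxLemma}, Lemma \ref{Laplace_potential_LP} and Proposition \ref{PotMeasureLP}; under the same assumption Theorem \ref{HawkesThm} combined with Lemma \ref{SBM_LP} transfers Proposition \ref{CapApprox} and the inequality (\ref{CapGeneral}) from an auxiliary subordinate Brownian motion back to $X_t$; and under (A2) together with WLSC for $\Re\psi$ the Green function estimates of Theorem \ref{PotentialKernelLP}, Corollary \ref{GreenPotentialGreenFunction} and the localization Proposition \ref{GreenLowerLP} all carry through with the radial profile $G_0$ replaced by the non-increasing majorant $\tilde G$ from (A2), at the cost of constants depending on $C_2^*$. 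Feeding these into the identity (\ref{CapSetA}) reproduces Proposition \ref{PropKrylovSafonov} by the same calculation as in Section 3.3.

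The delicate step, and the one I expect to be the main obstacle, is Proposition \ref{PropSuppportOut}. Its original proof uses radial monotonicity of $\nu_0$ to majorize the pointwise value $\nu(w-z)$ by an average of $\nu(u-z)$ over a small ball inside $B_{r_0}$ shifted towards $z$. Assumption (A1) is exactly the replacement for this step in a non-isotropic context: it permits bounding $\nu(x-y)$ by the local average of $\nu(y-\cdot)$ on the correct range of scales at a fixed cost $C_1^*$. With this substitute, combined with the doubling of $G$ provided by (A2) and the lower bound on $G_{B_r}$ from the transferred Proposition \ref{GreenLowerLP}, the decomposition of $B_{r_0}$ into $B_{3r_0/4}$ and a thin annulus goes through essentially verbatim, yielding $P_{B_{r_0}}(x,z) \leq c\,\tilde G(2r_0)\int_{B_{r_0}} \nu(u-z)\,du$ and then the desired domination against $P_{B_{r}}(y,z)$. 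The main technical point is a careful matching of the localization radius in (A1) with $r_0$, which is harmless since $r_0$ is a fixed fraction of $r\leq R$ and (A1) is assumed on that range; all constants end up depending only on $d$, $\beta$, $C^*$, $C_1^*$, $C_2^*$ and $C_3^*$.
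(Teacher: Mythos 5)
Your proposal is correct and follows essentially the same route as the paper, whose "proof" of this theorem is precisely the preceding discussion in Section~4: (A3) recovers the potential-measure and capacity estimates (via Theorem~\ref{HawkesThm} and Lemma~\ref{SBM_LP}), (A2) with WLSC for $\Re\psi$ recovers the potential-kernel and Green-function bounds and hence Proposition~\ref{PropKrylovSafonov}, and (A1) substitutes for radial monotonicity of $\nu$ in Proposition~\ref{PropSuppportOut}, after which the Bass--Levin scheme applies verbatim. You have correctly identified both required inputs and the role each assumption plays.
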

\begin{theorem}
Suppose that (A1)-(A3) hold and $\Re\psi$ satisfies the weak lower scaling condition, then the conclusion of Theorem \ref{Holder} hold.
\end{theorem}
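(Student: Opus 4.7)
My plan is to reduce to the same scheme used in the isotropic unimodal case (see the proof of Theorem \ref{Holder}), where the argument is the standard Bass--Levin iteration (\cite{BL}, Theorem 4.1). That iteration needs two analytic inputs about the process: a Krylov--Safonov type lower bound on the hitting probability of a fat subset of a ball before exit, and an upper bound on the probability of a large jump out of a small ball. I will verify that both inputs remain available under (A1)--(A3) and the weak lower scaling condition on $\Re\psi$.

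First I would collect the generalised potential-theoretic estimates already flagged in this section. Under (A3) the radial-profile identification in Lemma \ref{fStarScalling} together with the argument for Lemma \ref{Laplace_potential_LP} (now working with $\Re(\lambda+\psi)^{-1}$ and using $\psi^*(|x|)\le C_3^*\Re\psi(x)$) gives the conclusion of Proposition \ref{PotMeasureLP}. Comparison with the auxiliary subordinate Brownian motion of Lemma \ref{SBM_LP} via Theorem \ref{HawkesThm} transfers the capacity estimate, so that $\mathrm{Cap}(\overline{B}_r)\approx r^d\psi^*(r^{-1})$ and the capacity lower bound \eqref{CapGeneral} persist. Assuming further (A2) and WLSC for $\Re\psi$, the proofs of Theorem \ref{PotentialKernelLP}, Corollary \ref{GreenPotentialGreenFunction}, and Proposition \ref{GreenLowerLP} carry over, and adding (A1) allows the argument of Proposition \ref{PropSuppportOut} (the only place radial monotonicity of $\nu$ was actually used) to go through with $C_1^*$ playing the role of the doubling constant. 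Putting these pieces together one gets the Krylov--Safonov estimate of Proposition \ref{PropKrylovSafonov} in the present setting, as already announced in the preceding paragraph of this section.

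Second, I would revisit Corollary \ref{JumpProb}. Its first part relied on Corollary \ref{hApproxLemma}, i.e.\ on $h(r)\approx \psi^*(r^{-1})$; by Remark \ref{hApproxRem}, this comparability is valid as soon as $|\Im\psi|\le C_3^*\Re\psi$, which is part of (A3). Combined with Pruitt's exit-time bound $E^x\tau_{B(x,2s)}\le c/\psi^*(s^{-1})$ (which only needs (A3)), Lemma \ref{Harm1} gives the bound
\[
P^x(|X_{\tau_{B_s}}|\ge r)\le C\,\frac{\psi^*(r^{-1})}{\psi^*(s^{-1})},\qquad |x|\le s,\ s\le r/2.
\]
Inserting the assumed WLSC$(\beta,\theta,C^*)$ for $\Re\psi$ (which, via $\psi^*\le C_3^*\Re\psi\le C_3^*\psi^*$, is equivalent to WLSC for $\psi^*$) and applying Lemma \ref{star} then yields the second, polynomial form $P^x(|X_{\tau_{B_s}}|\ge r)\le C(s/r)^\beta$, for $0<r<\theta^{-1}$ and $|x|\le s\le r/2$. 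This is precisely the analog of the second part of Corollary \ref{JumpProb} needed for the H\"older step.

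With these two inputs in hand, the iteration of \cite{BL}, Theorem 4.1, applies verbatim: one combines the Krylov--Safonov estimate (to produce, at each scale $r/2^k$, a definite oscillation decay on the set where the harmonic function lies in the majority half-range) with the jump-probability bound (to control the contribution of $h$ on $B_r^c$ to the outside expectation, via $\|h\|_\infty(s/r)^\beta$-type tails). Iterating over dyadic scales up to $R$ produces the H\"older bound $|h(x)-h(y)|\le c(R)\|h\|_\infty(|x-y|/r)^\delta$ on $B_{r/2}$, for some $\delta>0$ depending only on $d$, $\beta$, $C^*$, $C_1^*$, $C_2^*$, $C_3^*$. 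The main technical obstacle, in my view, is not the Bass--Levin iteration itself but the verification that Proposition \ref{PropSuppportOut} truly goes through under (A1): one must check that the non-radial upper bound on $\nu$ supplied by (A1) is enough to replace the elementary ``averaging over a small ball'' step that in the unimodal proof used radial monotonicity of $\nu$. Once that replacement is in place, all remaining steps are routine adaptations of the arguments already given in Section 3.
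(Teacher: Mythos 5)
Your proposal is correct and follows essentially the same route as the paper: Section 4's discussion reduces the statement to checking that the Krylov--Safonov estimate (Proposition \ref{PropKrylovSafonov}) and the second part of Corollary \ref{JumpProb} survive under (A1)--(A3) with WLSC for $\Re\psi$, after which the Bass--Levin iteration from \cite{BL}, Theorem 4.1, is invoked exactly as in the proof of Theorem \ref{Holder}. Your chain of intermediate verifications (Lemma \ref{Laplace_potential_LP} and Proposition \ref{PotMeasureLP} under (A3), capacity via Theorem \ref{HawkesThm} and Lemma \ref{SBM_LP}, then Theorem \ref{PotentialKernelLP} and Proposition \ref{GreenLowerLP} under (A2)--(A3) plus WLSC, with (A1) substituting for radial monotonicity of $\nu$ in Proposition \ref{PropSuppportOut}) matches the paper's own outline.
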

\begin{remark}
If (A1), (A2) holds for $R=\infty$ and $\Re\psi$ satisfies the global weak lower scaling condition then the global scale invariant Harnack  inequality holds.
\end{remark}

For instance, we can use our results to the sum of two independent isotropic stable processes with drift. More precisely we consider a process  $X_t$ with $\psi(x)=|x|^{\alpha_1}+|x|^{\alpha_2}-i\left<x,\gamma\right>$, where $1<\alpha_1<2$,  $\alpha_2\leq 1$ and $\gamma\in\R^d$. It is easy to see that this process satisfy (A1) and (A3) and the global weak lower scaling condition. Since estimates for the heat kernel of this process are known locally in time (see \cite{Szczypkowski}) and estimates for  the heat kernel of the sum of two independent isotropic stable process (\cite{ChenKumagai}) one can check that potential kernels of these two processes are locally comparable. Hence the assumption (A2) is satisfied. Therefore we infer that the scale invariant Harnack inequality holds for $X_t$.

Note that similar condition as (A1) appeared in \cite{BogdanSztonyk} and \cite{BarlowBassKumagai} and exactly the same in \cite{ChenKimKumagai2} and  \cite{ChenKimKumagai}.
Instead of conditions (A2) and  (A3) the authors of the above mentioned papers assumed some additional conditions for a L\'{e}vy measure.

Now, we discuss the case $d\leq 2$. We say that a function $f:(0,\infty) \to (0,\infty)$ is almost increasing if there exists a constant $c$ such that for any $0<x<y$,  $f(x)\leq c f(y)$. Assume again that  $X_t$ is  an isotropic unimodal L\'{e}vy process. We pose a question if we can apply directly our method without assuming that $X_t$ is a projection as in Corollary \ref{proj}.   First of all let us notice that our approach requires  the process to be transient which holds for any isotropic L\'{e}vy process  for $d\geq 3$. In fact  we  not  only need transience but we also use the property that a function $r\to r^{d-2}\psi^*(1/r)$ is almost increasing for $d\geq 3$. Hence, if instead of $d\geq 3$ we  assume that a function $r\to r^{d-\varepsilon}\psi^*(1/r)$ is almost increasing for some $\varepsilon>0$ we obtain all results from Subsection 3.1 and 3.3, but with all the constants dependent on the process. In particular this assumption implies that the process is transient.

\vspace{10pt}

\noindent
{\large {\textbf{Acknowledgements}}}\\
This research are supported by the Alexander von Humboldt Foundation. This paper was  prepared during my
stay at the Technische Unversit\"{a}t Dresden.  I want to express
my gratitude for the hospitality provided by this university.  I would like to thank Prof. Rene Schilling and Prof. Micha\l{} Ryznar for  many stimulated discussions and the help in  preparation of this paper.

\end{document}